\newtheorem{theorem}{Theorem}[section]
\newtheorem{lemma}[theorem]{Lemma}
\newtheorem{proposition}[theorem]{Proposition}
\newtheorem{remark}[theorem]{Remark}
\newtheorem{definition}[theorem]{Definition}
\newtheorem{corollary}[theorem]{Corollary}
\newcommand{\n}{M}
\newcommand{\twoM}{M}
\newcommand{\rk}{j}
\newcommand{\jk}{j}
\newcommand{\zero}{1}
\newcommand{\one}{2}
\newcommand{\adp}{\mu}
\newcommand{\EE}{\mathbb{E}}
\newcommand{\FF}{\mathbb{F}}
\newcommand{\Eb}{\mathbf{E}}
\newcommand{\Fb}{\mathbf{F}}
\newcommand{\Pb}{\mathbf{P}}
\newcommand{\lE}{\ell} 
\newcommand{\ConstOne}{\frac{1}{24}}
\newcommand{\ConstTwo}{8}
\newcommand{\interior}{\mathrm{int}}
\newcommand{\bdry}{\mathrm{bdry}}
\newcommand{\closure}{\mathrm{cl}}
\newcommand{\convexhull}{\mathrm{co}}
\newcommand{\convex}{\mathrm{cov}}
\newcommand{\V}{T} 
\newcommand{\eq}[2]{\begin{equation} \label{#1} #2 \end{equation}}
\numberwithin{equation}{section}
\begin{document}

\title[Hausdorff dimension of Besicovitch sets]
{On the generalized Hausdorff dimension of Besicovitch sets}
\author{Xianghong Chen}
\author{Lixin Yan}
\author{Yue Zhong$^\ast$}
\address{Xianghong Chen, Department of Mathematics, Sun Yat-sen University, Guangzhou, 510275, P.R. China}
\email{chenxiangh@mail.sysu.edu.cn}
\address{Lixin Yan, Department of Mathematics, Sun Yat-sen University, Guangzhou, 510275, P.R. China}
\email{mcsylx@mail.sysu.edu.cn}
\address{Yue Zhong, Department of Mathematics, Sun Yat-sen University, Guangzhou, 510275, P.R. China}
\email{zhongy69@mail3.sysu.edu.cn}
\subjclass[2010]{28A78, 42B25}
\keywords{Besicovitch set, Kakeya set, Hausdorff dimension, Minkowski dimension} 
\thanks{$^{\ast}$Corresponding author}
\dedicatory{} 
\commby{} 


\begin{abstract}
Keich 
\cite{Keich1999} 
showed that the sharp gauge function 
for the generalized Hausdorff dimension 
of Besicovitch sets 
in $\mathbb R^2$ 
is between 
$r^2\log 1/r$ and 
$r^2(\log 1/r) (\log\log 1/r)^{2+\varepsilon}$,
by refining an argument of
Bourgain \cite{Bourgain1991}. 
It is not known whether 
the iterated logarithms 
in Keich's bound 
are necessary. 
In this paper 
we construct a family of 
Besicovitch line sets 
whose sharp gauge function 
is smaller than $r^2(\log 1/r) (\log\log 1/r)^{\varepsilon}$. 
Moreover, 
these Besicovitch sets 
are minimal in the sense that 
there is essentially only 
one line in the set 
pointing in each direction. 
\end{abstract}

\maketitle

\tableofcontents


\section{Introduction}
\label{sec:intro}
\vspace{1em}

A compact set $E\subset\mathbb R^2$ is called a \textit{Besicovitch set} 
if it has zero Lebesgue measure and it contains a unit line segment in every direction. 
The notion of Besicovitch set is closely related to that of \textit{Kakeya set}, a set in which a unit line segment can be rotated continuously through 180$^\circ$. 
After Besicovitch's original construction \cite{Besicovitch1919}, \cite{Besicovitch1928},  
alternative constructions of Besicovitch set were found by Perron \cite{Perron1928}, 
Besicovitch \cite{Besicovitch1964},
Sawyer \cite{Sawyer1987}, 
K\"orner \cite{Koerner2003}, 
Babichenko et al. \cite{Babichenko2014} 
using different approaches
(see also \cite{Rademacher1962}, \cite{Schoenberg1962}, \cite{Kahane1969}, \cite{Alexander1975}, \cite{Keich1999}, \cite{Wolff1999}). 
We refer the reader to 
Falconer \cite[\S\,12.1]{Falconer2003}, Mattila \cite[Ch.\,18]{Mattila1995}, \cite[Ch.\,22]{Mattila2015}, and Bishop-Peres \cite[Ch.\,9]{Bishop2017} for more background on Besicovitch set, and to 
Wolff \cite{Wolff1999} and 
Tao \cite{Tao2001} 
for its connections to harmonic analysis. 

It is well-known that every Besicovitch set has Hausdorff dimension 2 \cite{Davies1971}. 
In this paper, 
we consider the refined problem of measuring Besicovitch\linebreak sets in terms of \textit{generalized Hausdorff dimension} (cf. \cite[Ch.\,4]{Mattila1995}, \cite[\S\,2.5]{Falconer2003}). 
Based on earlier work by Cordoba \cite{Cordoba1977}, 
Keich \cite{Keich1999} showed that the 
sharp gauge function 
for the Minkowski dimension of Besicovitch sets is 
$r^2\log(1/r)$. 
By refining an argument of Bourgain \cite{Bourgain1991}, 
he further showed that 
the sharp gauge function for the generalized Hausdorff dimension of Besicovitch sets is between $r^2\log(1/r)$ and 
$r^2\log(1/r)({\log\log}(1/r))^{2+\varepsilon}$. 
In particular, he left open 
the question whether a Besicovitch set can have zero Hausdorff measure with respect to the gauge function $r^2\log(1/r)$. 
In this paper we consider a natural family of Besicovitch sets (denoted by $\mathcal K$ below), 
and show that within this family 
one can find explicit examples of Besicovitch set whose sharp gauge function is arbitrarily close to $r^2 \log(1/r)$. 

Our main result can be stated as follows. \\[-.5em]

\noindent\textbf{Theorem A}.  
\textit{There exists a Besicovitch set $E\in\mathcal K$ whose Hausdorff measure with respect to}
$r^2\,{\log}(1/r)$
\textit{is finite, and whose 
Hausdorff measure with respect to}
$r^2\, {\log}(1/r)\,{(\log\log}(1/r))^\varepsilon$ \textit{is infinite for all $\varepsilon>0$}.\\[-.5em]

Indeed, the iterated logarithm in Theorem A can be replaced by any factor that grows to infinity as $r\rightarrow 0$; see Theorem \ref{thm:Th1} below for the precise statement. 

The Besicovitch sets in $\mathcal K$ are 
constructed as \textit{line sets} (cf. \cite{Besicovitch1964}), that is, each $E\in\mathcal K$ can be written as   
$$E=\bigcup_{l\in\mathcal L} l$$
for a family $\mathcal L$ of line segments. 
We also show that each Besicovitch set $E\in\mathcal K$ is `\textit{minimal}' in the sense that there is only one line segment contained in $E$ pointing in each direction, 
except for countably many directions for which there are two such line segments.\\[-.5em]

\noindent\textbf{Theorem B}. 
\textit{For each $E\in\mathcal K$, there is a countable set $\Omega\subset\mathbb S^1$ of directions so that}

\noindent
\textit{\emph{(}i\emph{)} if $\omega\in\mathbb S^1\backslash\Omega$, there is exactly one line $\ell\parallel\omega$ with $\mathcal H^1(\ell\cap E)>0$};

\noindent\textit{\emph{(}ii\emph{)} if $\omega\in\Omega$, there are exactly two lines $\ell_i\parallel\omega$ $(i=1,2)$ with $\mathcal H^1(\ell_i\cap E)>0$.}\\[-.5em]

Here as usual $\mathcal H^1$ denotes 
the linear Hausdorff measure 
in $\mathbb R^2$. 
Indeed, the exceptional directions $\Omega$ and the lines 
$\ell, \ell_i$ in Theorem B
can be described rather precisely. 
See Theorem \ref{thm:Th2} for details. 
We remark that 
K\"orner \cite{Koerner2003} 
and K\'atay \cite{Katay2020} 
have shown that 
stronger `minimality' properties hold for 
typical (in the sense of Baire category) 
Besicovitch sets 
without exceptional directions. 

By rotation, it suffices to consider compact sets which contain unit line segments with slope ranging from 0 to 1. 
We shall continue to call such a set a Besicovitch set and identify $E\in\mathcal K$ with such sets. 

Similar to Besicovitch's construction \cite{Besicovitch1928}, 
the sets $E\in\mathcal K$ are constructed based on two elements: 
(i) a Kakeya-type set construction and 
(ii) iteration. 
Given a dyadic integer $M=2^m$, 
a variant of Besicovitch's `cut-and-slide' procedure 
is used to generate a Kakeya-type set $\mathbf E_M$, which 
is a union of $2^M$ many triangles with base $2^{-M}$, height $1$, and 
pointing in $2^{-M}$-separated directions,  so that 
$$|\mathbf E_M|\approx \frac1M,\quad \text{as } M\rightarrow\infty.$$ 
As shown in \cite{Keich1999}, the order $\frac1M$ is smallest possible by the work of Cordoba \cite{Cordoba1977}. 
This order is achieved in the constructions by 
Perron \cite{Perron1928} 
and 
Schoenberg \cite{Schoenberg1962} 
(see also \cite{Schoenberg1962a}, \cite{Babichenko2014}, \cite[Ch.\,9]{Bishop2017}). 
The Kakeya-type set $\mathbf E_M$ used in this paper corresponds to the minimal construction in \cite{Schoenberg1962}. 

The second element of the construction is an iteration which runs the cut-and-slide procedure over each triangle in $\mathbf E_M$ (and so forth). 
More precisely, 
given a nondecreasing sequence $\{M_n\rightarrow\infty\}$, 
such an iteration 
starts with $\mathbf F_1 = \mathbf E_{M_1}$ 
and generates a sequence of Kakeya-type sets 
$\mathbf F_1, \mathbf F_2,\cdots$, whose limit set 
$$E=\lim_{n\rightarrow\infty} \mathbf F_n$$ 
becomes a Besicovitch set (denoted by $E_{\{M_n\}}$ below). 
These Besicovitch sets form the family $\mathcal K$ mentioned above, that is, 
$$\mathcal K := \big\{E_{\{M_n\}}: 
M_n \text{ is dyadic, nondecreasing, and satisfies } M_n\rightarrow\infty\big\};$$
see Section \ref{sec:F} for details. 

The proof of
Theorem A relies on choosing a suitable (rapidly growing) sequence $\{M_n\}$ 
to guarantee that 
$E_{\{M_n\}}$ has positive Hausdorff measure with respect to the gauge $r^2 \log(1/r)\log\log\log(1/r)$. 
Several geometric properties of $\mathbf E_M$ are crucial to the proof. The first one is the Perron-tree structure of $\mathbf E_M$. Although not constructed in exactly the same way as in \cite{Perron1928}, 
$\mathbf E_M$ has the similar property that, 
starting from a `trunk' of size about $\frac 1M\times\frac 1M$, 
it `branches' into $2^k$ many (essentially) solid parallelograms at height 
$\frac kM$, 
where $k=1, \cdots, M-1$. 
The second geometric property, 
which is a consequence of the first, 
is that 
$\mathbf E_M$ is $\frac 1M$-`porous' at each height, 
thus converges to a solid (up-side-down) triangle 
as $M\rightarrow\infty$. 
The third property 
has to do with the 
`mass distribution' 
of $\mathbf E_M$ 
at each height and 
within the plane, summarized in Propositions \ref{prop NI} and \ref{prop:optimum cover of EM}. 
Roughly speaking, these propositions quantify how far $\mathbf E_M$ deviates from being $r^2\log (1/r)$-dimensional at different locations and scales. 

The plan of the paper is as follows. 
In Section \ref{sec:prelim}, we introduce some notation and give some preliminaries. 
In Section \ref{sec:E}, we present the construction of $\mathbf E_M$ and prove the geometric properties mentioned above. 
In Section \ref{sec:F}, we present the construction of $E_{\{M_n\}}$ and prove some basic properties of it. 
Theorem A is restated as Theorem \ref{thm:Th1} and proved in Section \ref{sec:Th1}. 
Theorem B and its more precise version (Theorem \ref{thm:Th2}) are proved in Section \ref{sec:Th2}. 


\section{Notation and Preliminaries}
\label{sec:prelim}

Throughout the paper, 
unless otherwise stated, 
all sets (such as squares, triangles, parallelograms, intervals) are considered to be closed. 
Two sets are considered to be disjoint 
if their interiors are disjoint. 
All sets below lie in the rectangle $[0,1]\times[-1,1]$. 

We call $x\in[0,1]$ a \textit{dyadic rational} if
$x=\frac{j}{2^m}$ 
for some integers $j$ and $m$. 
An interval $I$ is called a \textit{dyadic interval} if 
$I=[\frac{j}{2^m},\frac{j+1}{2^m}]$ 
for some integers $j$ and $m$. 
$Q\subset\mathbb R^2$ is called a \textit{dyadic square} if 
$Q=I_1\times I_2$
for some dyadic intervals $I_1$ and $I_2$.  
The sidelength of a square $Q$ will be denoted by $r(Q)$. 
Unless otherwise stated, 
\textit{all squares below are dyadic}. 

We use $\delta$ to denote \textit{dyadic scales} of the form
$\delta=2^{-m},$
where $m\ge 0$ is an integer. 
Let $E\subset\mathbb R^2$ be a compact set.  
We use $\mathcal N_\delta(E)$ to denote the minimum number of $\delta\times\delta$ squares needed to cover $E$. 
By a cover of $E$, 
we mean a collection of squares $\{Q_i\}$ such that $E\subset\bigcup_i Q_i\,;$ if each $Q_i$ satisfies 
$\delta_2\le r(Q_i)\le \delta_1,$ 
then we call $\{Q_i\}$ a $[\delta_2,\delta_1]$\textit{-cover of} $E$. 
Let $\{Q_i\}$ and $\{\widetilde Q_j\}$ be two collections of squares. 
We write $$\{Q_i\}\prec\{\widetilde Q_j\}$$ to indicate that each $Q\in\{Q_i\}$ is contained in some $\widetilde Q\in\{\widetilde Q_j\}$, in which case we say that $\{Q_i\}$ \textit{is nested in} $\{\widetilde Q_j\}$. 
The generalized Hausdorff measure of $E$ will be studied, without loss of generality, through finite covers of $E$. 
This motivates the following definitions, 
which will play an important role 
in the proof of Theorem A. 

\begin{definition}[scale-limited $h$-measure]
\label{def:h-measures}
Let $h:(0,1]\to (0,\infty)$ be a continuous function with $\lim_{r\rightarrow0} h(r)=0$, 
and let $E\subset [0,1]\times[-1,1]$. 
Given two dyadic scales $\delta_2\le\delta_1$, 
the $[\delta_2,\delta_1]$\emph{-limited} $h$-\emph{measure of} $E$ is defined by
$$
\mathcal{M}_{\delta_2}^{\delta_1}(E;h) = \min \Big\{\displaystyle\sum_{i} 
h(r(Q_i)): \{Q_i\} \text{ is a $[\delta_2,\delta_1]$-cover of $E$}\Big\}. 
$$
\noindent In particular, when $\delta_1=1$ and $\delta_2=\delta$, define 
$$
\mathcal{M}_{\delta}(E;h)=\mathcal{M}_{\delta}^{1}(E;h). 
$$
\end{definition}

Note that Definition \ref{def:h-measures} also applies to general (not necessarily compact) subsets of $[0,1]\times[-1,1]$, and that $\mathcal{M}_{\delta_2}^{\delta_1}(E;h)$ is finitely subadditive with respect to $E$. 

\begin{remark}
\label{rmk:def-2.1}
When $E$ is contained in a square $\widetilde Q$ with 
$r(\widetilde Q)=\delta_1$, $\mathcal{M}_{\delta_2}^{\delta_1}(E;h)$ can be evaluated by considering only covers $\{Q_i\}$ of $E$ that are nested in $\{\widetilde Q\}$ (otherwise one can `flip' $Q_i$     along the boundary of $\widetilde Q$ to the inside of $\widetilde Q$). 
\end{remark}

\begin{definition}[optimal cover]
\label{def:optimal-cover}
Let $h$ and $E$ be as in Definition \ref{def:h-measures}. 
A collection of squares $\{Q_i\}$ 
is called an \emph{optimal} $[\delta_2,\delta_1]$-\emph{cover of} $E$ with respect to $h$, if the minimum in the definition of 
$\mathcal{M}_{\delta_2}^{\delta_1}(E;h)$ 
is attained by $\{Q_i\}$;  
if instead $\{Q_i\}$ satisfies 
$$\sum_{i}  h(r(Q_i))
\le C\cdot \mathcal{M}_{\delta_2}^{\delta_1}(E;h)$$
for an absolute constant $C\ge1$, then 
$\{Q_i\}$ is called a 
\emph{quasi-optimal $[\delta_2,\delta_1]$-cover of} $E$ with respect to $h$. 
\end{definition}

When $\delta_1=1$ and $\delta_2=\delta$, 
we call an optimal $[\delta,1]$-cover of $E$ an \emph{optimal} $\delta$\emph{-cover of} $E$. 
Note that an optimal $[\delta_2,\delta_1]$-cover of $E$ always exists 
due to the scale limitation. 
Note also that an optimal cover necessarily consists of disjoint squares due to its optimality. 

Set 
$$\log(R)=\text{max}(\log_2 R, 1),\quad R>0.$$
In Sections \ref{sec:E} and \ref{sec:Th1} 
we shall be concerned with 
$h(r)=r^2\log(1/r)$ 
and 
$h_1(r)=r^2\log(1/r)\,\phi(r)$ 
respectively, for suitable slowly-growing functions $\phi(r)$
such as 
$\log\log\log(1/r)$. 
For notational simplicity, 
we shall denote 
$$h(Q)=h(r(Q))$$
and $h_1(Q)=h_1(r(Q))$ when $Q$ is a square. 

We use $|E|$ to denote both the two-dimensional Lebesgue measure (when $E\subset\mathbb R^2$) 
and the one-dimensional Lebesgue measure (when $E\subset\mathbb R$). 
The interior, boundary, and closure of a set $E\subset\mathbb R^2$ are denoted by $\interior(E)$, $\bdry(E)$, and $\closure(E)$ respectively. 
The \emph{vertical $\delta$-neighborhood of} $E$ is defined by 
$$E(\delta)=\bigcup_{(x,y)\in E} \{x\}\times (y-\delta,y+\delta).$$ 
The vertical cross-section of $E$ at $x$ is denoted by $E|_x$, that is, 
\begin{equation}
\label{eq:E|x}
E|_x=E\cap \big(\{x\}\times\mathbb R\big).
\end{equation}
The \textit{vertical convex hull} of $E$ is defined by 
\begin{equation}
\label{eq:cov(E)}
\convex(E)=\bigcup_x\,\convexhull(E|_x),
\end{equation}
where $\convexhull(E|_x)$ stands for the usual convex hull in $\mathbb R^2$. 

A set $\mathbf E\subset\mathbb R^2$ is called \textit{solid} if for every $z\in\mathbf E$ and $\varepsilon>0$, we have 
$$|D(z,\varepsilon)\cap\mathbf E|>0.$$ 
Here as usual $D(z,\varepsilon)$ stands for the two-dimensional disk centered at $z$, of radius $\varepsilon$. 
Solid sets will often be denoted in boldface. 

\begin{lemma}[nesting of optimal covers]
\label{lem:nesting}
Let $h$ be as in Definition \ref{def:h-measures}. 
Let $\mathbf F_2\subset \mathbf F_1$ be two solid sets in $[0,1]\times[-1,1]$, 
and let $\delta_2\le \delta_1$. If $\{\widetilde Q_j\}$ is an optimal $\delta_1$-cover of $\mathbf F_1$ with respect to $h$, then there exists an optimal $\delta_2$-cover of $\mathbf F_2$ with respect to $h$ that is nested in $\{\widetilde Q_j\}$.  
\end{lemma}
\begin{proof}
Among the (finitely many) optimal $\delta_2$-covers of $\mathbf F_2$, 
let $\{Q_i\}$ be a minimal one which consists of the largest (in lexicographical order) numbers of small squares. 
We show that $\{Q_i\}\prec\{\widetilde Q_j\}$. 

First, notice that since $\{\widetilde Q_j\}$ is also a cover of $\mathbf F_2$, 
we have 
\begin{equation}
\mathbf F_2\subset\bigcup_{i,j}\,
\big(Q_i\cap\widetilde Q_j\big).\label{eq:QiQj}
\end{equation}
Each $Q_i\cap\widetilde Q_j$ is either empty or falls in one of the following three cases: 
\begin{equation}
\label{eq:3-cases}
Q_i\cap\widetilde Q_j
\begin{cases} 
= Q_i, & \text{if } Q_i\subset \widetilde Q_j; \\ 
= \widetilde Q_j, & \text{if } \widetilde Q_j \subsetneq Q_i; \\ 
\subset \bdry(Q_i), & \text{otherwise.} 
\end{cases}
\end{equation}
Since $\mathbf F_2$ is a solid set, 
the last case (boundary intersection) is redundant and can be removed from the right-hand side of \eqref{eq:QiQj}, so that 
\begin{equation*}
\mathbf F_2\subset
\Big(\bigcup_{i,j:\atop Q_i\subset \widetilde Q_j}\,
Q_i\Big)
\bigcup \Big(\bigcup_{i,j:\atop \widetilde Q_j\subsetneq Q_i}\,
\widetilde Q_j\Big).
\end{equation*}
To complete the proof, 
it now suffices to show that the second case in \eqref{eq:3-cases} does not happen. 

Assume to the contrary that there exists an $i_0$ such that $\widetilde Q_j \subsetneq Q_{i_0}$ for some $j$. 
Since $\{\widetilde Q_j\}$ are disjoint, we have 
$$\interior(Q_{i_0})\cap \mathbf F_1\;
\subset \bigcup_{j:\,\widetilde Q_j \subsetneq Q_{i_0}} \widetilde Q_j, $$
and thus 
$$\interior(Q_{i_0})\cap \mathbf F_2\;
\subset \bigcup_{j:\,\widetilde Q_j \subsetneq Q_{i_0}} \widetilde Q_j.$$ 
Consequently, using the solidity of $\mathbf F_2$ again, 
the collection of squares 
\begin{equation}
\label{eq:squares-Qj}
\big\{\widetilde Q_j: \widetilde Q_j \subsetneq Q_{i_0}\big\}
\end{equation}
can be used to replace $Q_{i_0}$ in $\{Q_i\}$ to form a new $\delta_2$-cover of $\mathbf F_2$. 
Denote this new cover by $\{Q_i'\}$. 

It remains to show that $\{Q_i'\}$ forms an optimal $\delta_2$-cover of $\mathbf F_2$, 
which would be a contradiction since $\{Q_i\}$ was chosen to minimal. 
Notice that $Q_{i_0}$ can be used to replace the squares \eqref{eq:squares-Qj} in the cover $\{\widetilde Q_j\}$ to form a new $\delta_1$-cover of $\mathbf F_1$. 
It follows from the optimality of $\{\widetilde Q_j\}$ that 
$$\sum_{j:\,\widetilde Q_j \subsetneq Q_{i_0}} h(\widetilde Q_j)\le h(Q_{i_0}).$$
This shows that $\{Q_i'\}$ forms an optimal $\delta_2$-cover of $\mathbf F_2$, 
and the proof of Lemma \ref{lem:nesting} is complete. 
\end{proof}

The next two lemmas will be used in Sections \ref{sec:F} and \ref{sec:Th1} respectively to simplify rescaling arguments. Let $E\subset\mathbb R^2$ 
and let $\delta>0$. 
We denote     
$$\delta.E=\{(x,\delta y): (x,y)\in E\}.$$
{
\begin{lemma}[sublinearity]
\label{lem:dilation}
Let $h$ and $E$ be as in Definition \ref{def:h-measures}, 
and let $\delta_2\le\delta_1$. 
Then, for any $\delta\le 1/2$, 
we have
$$\mathcal{M}_{\delta_2}^{\delta_1}(\delta.E;h)
\ge \delta\cdot\mathcal{M}_{\delta_2}^{\delta_1}(E;h).$$
\end{lemma}
\begin{proof}
Let $\{Q_i\}$ be an optimal $[\delta_2,\delta_1]$-cover of $\delta.E$ with respect to $h$, so that 
$$\sum_{i}  h(Q_i)
=\mathcal{M}_{\delta_2}^{\delta_1}(\delta.E;h).$$
Then 
$$E\subset\bigcup_i\,(\delta^{-1}.Q_i).$$
Notice that each $\delta^{-1}.Q_i$ can be decomposed into $\delta^{-1}$ many dyadic squares $Q_{i,j}$ 
with $r(Q_{i,j})=r(Q_i)$, $j=1,\cdots,\delta^{-1}$. 
Thus $\{Q_{i,j}\}$ forms a $[\delta_2,\delta_1]$-cover of $E$. Using this cover, we can bound 
\begin{align*}
\mathcal{M}_{\delta_2}^{\delta_1}(E;h)
&\le \sum_{i}\sum_{j=1}^{1/\delta}  h( Q_i^{(j)} )\\
&= \sum_{i} {\delta^{-1}} h( Q_i)\\
&= {\delta^{-1}} \mathcal{M}_{\delta_2}^{\delta_1}(\delta.E;h). 
\end{align*} 
Multiplying both sides by $\delta$, this proves Lemma \ref{lem:dilation}. 
\end{proof}
}

Let $E\subset\mathbb R^2$ 
and let $l(x)=ax+b$ be a line. 
We denote 
\begin{equation}
\label{eq:E+l-def}
E+l=\big\{(x,y+l(x)): (x,y)\in E\big\}.
\end{equation}

{
\begin{lemma}[affine stability]
\label{lem:affine}
Let $h$ and $E$ be as in Definition \ref{def:h-measures}, 
and let $\delta_2\le\delta_1$. 
Suppose $l(x)=ax+b$ satisfies $a\in[0,1]$ and $E+l\subset[0,1]\times[-1,1]$. 
Then
$$\mathcal{M}_{\delta_2}^{\delta_1}(E+l;h)
\ge \frac13\,\mathcal{M}_{\delta_2}^{\delta_1}(E;h).$$
\end{lemma}
\begin{proof}
The proof is similar to that of Lemma \ref{lem:dilation}. 
Let $\{Q_i\}$ be an optimal $[\delta_2,\delta_1]$-cover of $E+l$ with respect to $h$, so that 
$$\sum_{i}  h(Q_i)
=\mathcal{M}_{\delta_2}^{\delta_1}(E+l;h).$$
Then, writing $(-l)(x)=-ax-b$, we have  
$$E\subset\bigcup_i\,(Q_i-l).$$ 
Since $a\in[0,1]$, 
each parallelogram $Q_i-l$ can be covered by three dyadic squares $Q_{i,j}$ 
with $r(Q_{i,j})=r(Q_i)$, $j=1,2,3$.   
Thus $Q_{i,j}$ forms a $[\delta_2,\delta_1]$-cover of $E$. Using this cover we can bound 
\begin{align*}
\mathcal{M}_{\delta_2}^{\delta_1}(E;h)
&\le \sum_{i}\sum_{j=1}^{3}  h(Q_i^{(j)} )\\
&= \sum_{i} 3\,h( Q_i)\\
&= 3\,\mathcal{M}_{\delta_2}^{\delta_1}(E+l;h). 
\end{align*} 
Dividing both sides by 3, 
this proves Lemma \ref{lem:affine}. \end{proof}
}

Finally, we record three simple lemmas which will be useful in Sections \ref{sec:E}, \ref{sec:F}, and \ref{sec:Th1} respectively. 

\begin{lemma}
\label{lem:leb-solidity}
Let $\widetilde Q$ be a dyadic square. If $Q_i\subset \widetilde Q$
are dyadic squares with 
\begin{equation}
\label{eq:squares-density}
\Big|\bigcup_i Q_i\Big|\ge \rho\,|\widetilde Q|,
\end{equation}
then, for any decreasing function 
$\varphi:(0,1]\rightarrow (0,\infty)$,  
we have 
\begin{equation}
\label{eq:measure}
    \sum_i h(Q_i)\ge \rho\cdot h(\widetilde Q), 
\end{equation}
where $h(r)=r^2\varphi(r)$. 
\end{lemma}
\begin{proof}
By \eqref{eq:squares-density} and the subadditivity of the Lebesgue measure, we have 
\begin{equation}
\label{eq:lemma-solidity}
h(\widetilde Q) 
= \vert \widetilde Q \vert \,\varphi(r(\widetilde Q)) 
\leq \rho^{-1}\sum_i  \vert Q_i \vert \,\varphi(r(\widetilde Q)).
\end{equation}
By the monotonicity of $\varphi$, 
$$\eqref{eq:lemma-solidity}\le\rho^{-1}\sum_i  \vert Q_i \vert \,\varphi(r(Q_i))
=\rho^{-1}\sum_i  h(Q_i).$$
Multiplying throughout by $\rho$, 
this completes the proof of Lemma \ref{lem:leb-solidity}. 
\end{proof}

A simple consequence of Lemma \ref{lem:leb-solidity} is 
\eq{eq:M(Q)}{
\mathcal{M}_{\delta_2}^{\delta_1}(\widetilde Q;h)=h(\widetilde Q),
}
where $\widetilde Q$ and $h$ are as in Lemma \ref{lem:leb-solidity}, and $\delta_2\le\delta_1=r(\widetilde Q)$. 

\begin{definition}
\label{def:epsilon-dense}
Let $X=\{x_i\}_{i=1}^{N}$
be a finite set of reals with 
$x_{i}<x_{i+1}$ 
$(i=1,\cdots, N-1)$, 
and let $\varepsilon>0$. 
We say that $X$ is $\varepsilon$-\emph{dense} if 
$$\max_{1\le i<N}|x_{i+1}-x_{i}|\le \varepsilon.$$
\end{definition}

It is easy to see that $X$ is $\varepsilon$-{dense} if and only if 
$\,\bigcup_{x\in X} (I_\varepsilon+x) \text{ is connected},$
where $I_\varepsilon$ is any closed interval with $|I_\varepsilon|=\varepsilon$ 
(in particular, one can take $I_\varepsilon=[-\varepsilon,0]$ or $[0,\varepsilon]$). 

\begin{lemma}
\label{lem:dense-union}
Suppose $X, Y\subset\mathbb R$ are $\varepsilon$-dense and satisfy
\begin{equation}
\label{eq:epsilon-close}
\min_{x\in X\atop y\in Y} |x-y|\le \varepsilon.
\end{equation}
Then $X\cup Y$ is $\varepsilon$-dense. 
\end{lemma}
\begin{proof}
Denote $x_1=\min X$, $x_{N}=\max X$, $x_{N+1}=x_{N}+\varepsilon$, and $y_1=\min Y$. Without loss of generality, we may assume $x_1\le y_1$. 
Then, by \eqref{eq:epsilon-close} we must have $y_1\in [x_1,x_{N+1}]$. 
Since both $X$ and $Y$ are $\varepsilon$-dense, 
it follows by sorting $X\cup Y$ that so is $X\cup Y$.  
\end{proof}

\begin{definition}
\label{def:eta-connected}
Let $\eta>0$. A family of intervals  
$\{[x-\varepsilon-\eta,x]: x\in X\}$ is called 
$\eta$-\emph{connected} 
if $X\subset\mathbb R$ is $\varepsilon$-{dense}. 
\end{definition}

It is easy to see that $\{[x-\varepsilon-\eta,x]: x\in X\}$ is $\eta$-{connected} 
if and only if 
$\{[x,x+\varepsilon+\eta]: x\in X\}$ is $\eta$-{connected}.
A direct corollary of Lemma \ref{lem:dense-union} is the following. 

\begin{corollary}
\label{cor:eta-connected}
Let $\{I_i\}$ and $\{J_j\}$ be two $\eta$-connected interval families, with $|I_i|=|J_j|$. 
Write $I=\bigcup_i I_i$ and $J=\bigcup_j J_j$. 
If $|I\cap J|\ge \eta$, 
then the interval family $\{I_i\}\cup\{J_j\}$ is $\eta$-connected. 
\end{corollary}

The next lemma shows that, 
when one considers $[\delta,1]$-covers of a set, 
(vertical) gaps of size smaller than $\delta$ are effectively `invisible'.

\begin{lemma}
\label{lem:invisible}
Let $F\subset\mathbb R^2$. Suppose 
$F(\delta/2)$ contains a square $\widetilde Q$ with $r(\widetilde Q)\ge\delta$. Then, whenever $F$ is covered by a collection of squares $\{Q_i\}$ 
with $r(Q_i)\ge \delta,$ 
so is $\widetilde Q$.  
\end{lemma}
\begin{proof} 
The lemma is essentially a one-dimensional statement, 
since the vertical neighborhood $F(\delta/2)$ is defined section-wise. 
Suppose by contradiction 
$$R=\widetilde Q\;\backslash\bigcup_i Q_i \neq \varnothing.$$
Then, since $r(\widetilde Q), r(Q_i)\ge\delta$, 
$R$ must contain an open square $Q_\delta$ of sidelength $\delta$. 
By the assumption $\widetilde Q\subset F(\delta/2)$, 
the center of $Q_\delta\subset\widetilde Q$ is contained in $F(\delta/2)$.  
By the definition of $F(\delta/2)$, this implies
$$Q_\delta\cap F\neq\varnothing,$$ 
which is a contradiction since 
$Q_\delta$ is disjoint from 
$\{Q_i\}$ (which covers $F$). 
\end{proof}


\section{Construction of $\Eb_{\n}$}
\label{sec:E}

In this section, 
we first recall the construction of Kakeya-type sets from \cite{Schoenberg1962}. 
These Kakeya-type sets 
will be the `building blocks' 
in the iterative construction of Besicovitch sets in Section \ref{sec:F}. 
Then we prove some 
geometric and distributional properties 
of these Kakeya-type sets. 

\subsection{The construction} 
For each integer $\n\ge2$, 
we shall construct a Kakeya-type set $\Eb_{\n}$ as in  \cite{Schoenberg1962} and \cite{Keich1999} 
using a cut-and-slide procedure. 
Our exposition here is somewhat more geometrical, 
in order to facilitate the proof of several properties of $\Eb_{\n}$. 
\\ 

\noindent\textbf{Besicovitch compression for trapezoids}. 
In order to keep track of geometric configurations 
generated in the construction, 
we first apply the Besicovitch compression to 
a pair of trapezoids that share a common side,  
and introduce some notation. 
More specifically, 
given a trapezoid $T$ with vertical bases at $x=0$ and $x=b>0$ 
(Figure \ref{fig:trapezoid}, left), 
cut $T$ through the midpoints of its bases. 
We get two trapezoids $T_\zero$ (upper) and $T_\one$ (lower). 
Slide $T_\zero$ upwards as needed, 
then slide $T_\one$ upwards 
(denote by $\widetilde{T_\zero}, \widetilde{T_\one}$ the new trapezoids) 
so that the upper sides
of $\widetilde{T_\zero}$ and $\widetilde{T_\one}$ intersect at $x=c$ ($0\le c\le b$). 
We thus obtain a decomposition
\begin{equation}\label{eq:Besicovitch}
\widetilde{T_\zero}\cup \widetilde{T_\one} = T' \cup (P - A), 
\end{equation} 
where $T'$ is a new trapezoid based at $x=0$ and $x=c$; 
$P$ is a parallelogram bounded between $x=c$ and $x=b$; 
$A\subset P$ is a (half-open) triangle based at $x=b$ (Figure \ref{fig:trapezoid}, right). 
We shall often write $P=P(a)$ to indicate that the slope of the parallelogram is $a$. 
    
    \begin{figure}[ht]
    \centering
    \includegraphics[width=\linewidth]{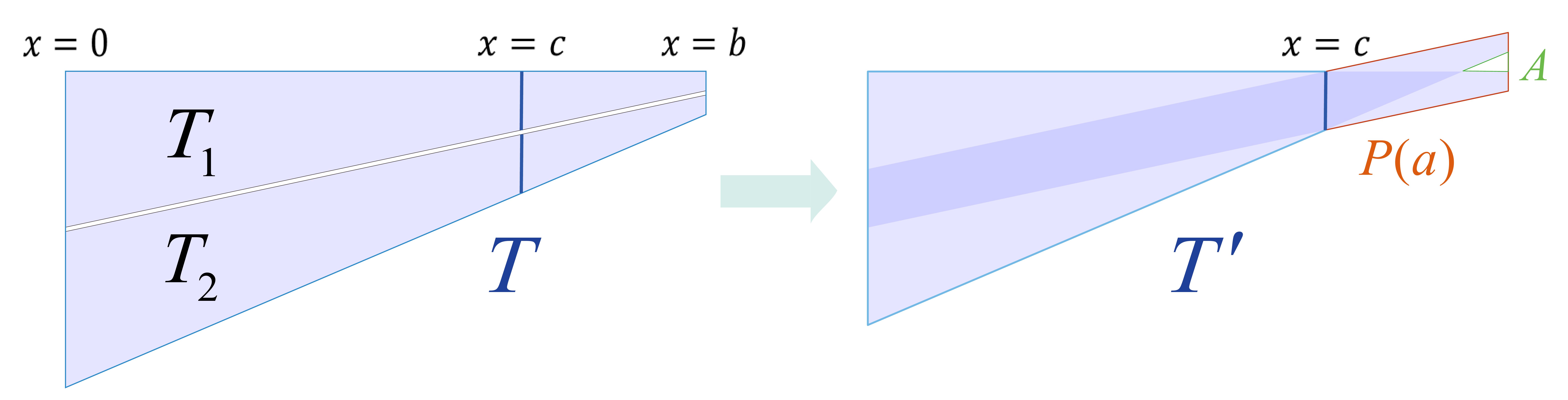}
    \caption{Besicovitch compression of $T_\zero\cup T_\one$}
    \label{fig:trapezoid}
    \end{figure}

We now describe the construction. \\

\noindent\textbf{Step 0}. 
Start with the right triangle in the fourth quadrant with vertices $(0,0)$, $(1,0)$, and $(0,-1)$ (Figure \ref{fig:E}). 
View the vertical side of the triangle as the base. 
Divide the base into $2^\n$ many intervals, each of length $2^{-\n}$. 
Using these intervals as new bases, 
and $(1,0)$ as a common vertex, 
we get $2^\n$ many thin triangles 
$S_i^{0}, i=1, \cdots, 2^\n$. 
In subsequent steps, 
we shall slide these triangles vertically 
and gradually to get $\Eb_{\n}$. 
For notational convenience, we also view $S_i^{0}$ as a trapezoid and set $T_i^{(0)}=S_i^{0}$.\\

\noindent\textbf{Step $\mathbf{\textit{\rk}}$} ($\rk=1,\cdots, \n$). 
Given $S_i^{\rk-1}$ and trapezoids $T_i^{(\rk-1)}\subset S_i^{\rk-1}$, 
$i=1, \cdots, 2^{\n-\rk+1}$, 
slide $S_{2i}^{\rk-1}$ upwards so that the upper sides of $T_{2i-1}^{(\rk-1)}$ and $T_{2i}^{(\rk-1)}$ 
intersect at 
$$x_j:=1-\frac{\rk}{\n}.$$ 
Still denote the slid copies by  ${S}_{2i}^{\rk-1}$ 
(resp. $T_{2i}^{(\rk-1)}$). 
Set $S_{i}^{\rk}=S_{2i-1}^{\rk-1}\cup S_{2i}^{\rk-1}$, 
and let $T_{i}^{(\rk)}$, $P_{i}^{\rk}$, and $A_{i}^{\rk}$ 
be defined by the decomposition 
$$T_{2i-1}^{(\rk-1)}\cup T_{2i}^{(\rk-1)}
=T_{i}^{(\rk)}\cup (P_{i}^{\rk}-A_{i}^{\rk})$$ 
described in \eqref{eq:Besicovitch}. 
Note that the trapezoids $T_{i}^{(\rk)}$ are based at $x=0$ and $x=x_j$, 
and that the parallelograms 
\begin{equation}
\label{eq:Pij-slope}
P_{i}^{\rk} = P^j\Big(\frac{2i-1}{2^{\n-\rk+1}}\Big),\quad i=1,\cdots, 2^{M-j}
\end{equation}
are bounded between $x=x_j$ and $x=x_{j-1}$. 

Finally, set $\Eb_{\n}=S_{1}^{\n}$. This completes the construction of $\Eb_{\n}$. 

    \begin{figure}[h]
    \centering    \includegraphics[width=0.95\linewidth]{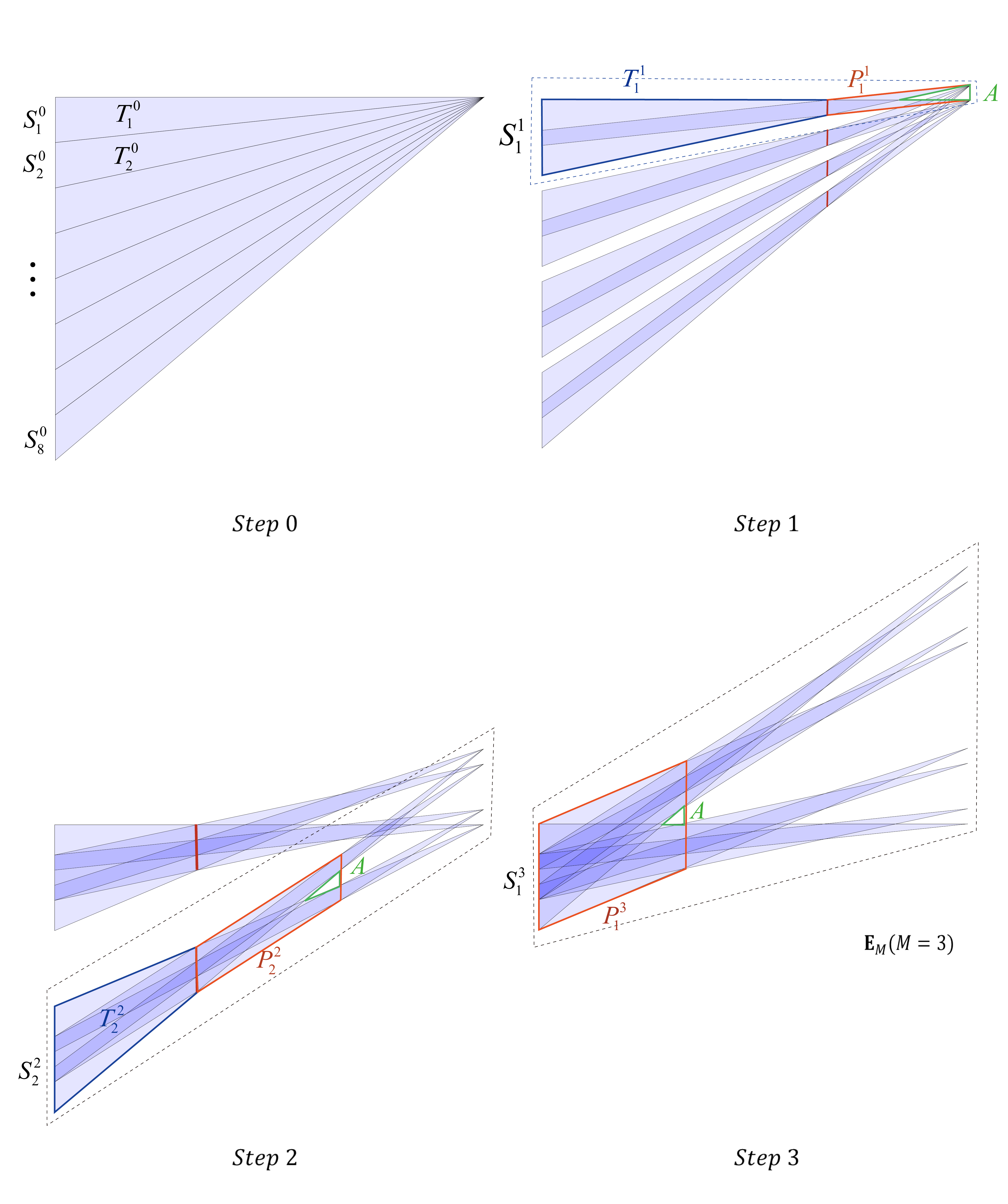}
    \caption{Construction of $\Eb_3$}
    \label{fig:E}
    \end{figure} 
  
\medskip 

\subsection{Properties of $\Eb_{\n}$} 
First, we make some basic observations about the construction. 

\begin{proposition}
\label{prop:EM-observe}
$({i})$ Each vertical slide in Step $\rk$ is of size 
$\frac{\rk}{\n}\frac{2^{\rk-1}}{2^\n}$; 

\noindent$({ii})$ The left $($larger$)$ base of $T_{i}^{(\rk)}$ has length $\Big(1-\frac{j-1}{M}\Big)\frac{2^\rk}{2^\n}-\frac{1}{\n}\frac{1}{2^\n}$; 

\noindent$({iii})$
The vertical side of 
$P_{i}^{\rk}$ has length
$\frac1\n \frac{2^\rk}{2^\n}
-\frac{1}{\n}\frac{1}{2^\n}$;

\noindent$({iv})$
The vertical side of $A_{i}^{\rk}$ has length $\frac{1}{\n}\frac{1}{2^\n}$.
\end{proposition}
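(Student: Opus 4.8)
The plan is to prove all four items simultaneously by induction on $j$, carrying along enough bookkeeping to pin down the exact position of each trapezoid $T_i^j$. For this I would track three quantities: the slopes of the two slanted sides of $T_i^j$, the height $a_i^j$ of its top-left corner (on the line $x=0$), and the length $L_j$ of its left base. A preliminary look at the compression \eqref{eq:Besicovitch} shows that the upper and lower sides of $T_i^j$ have slopes $\tfrac{i-1}{2^{M-j}}$ and $\tfrac{i}{2^{M-j}}$, so that a pair $T_{2i-1}^{j-1},T_{2i}^{j-1}$ shares the slope $s_m:=\tfrac{2i-1}{2^{M-j+1}}$ (which is the recorded slope of $P_i^j$), with the three relevant slopes $s_+:=\tfrac{2i-2}{2^{M-j+1}}<s_m<s_-:=\tfrac{2i}{2^{M-j+1}}$. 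In particular the vertical width of $T_i^j$ decreases at rate $\tfrac{2^j}{2^M}$ as $x$ grows. The base case $j=0$ is read off Step $0$: $a_i^0=-\tfrac{i-1}{2^M}$, $L_0=2^{-M}$, with slopes $\tfrac{i-1}{2^M},\tfrac{i}{2^M}$.

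The key observation, and the one that makes (i) tractable, is that in each Step only the even-indexed piece $S_{2i}^{j-1}$ is slid, so the odd-indexed (upper) piece is never moved. Hence the top-left corner of $T_i^j$ coincides with that of $T_{2i-1}^{j-1}$; iterating the pairing down to Step $0$ gives $a_i^j=a_{2^j(i-1)+1}^0=-\tfrac{2^j(i-1)}{2^M}$. The point I would stress is that \emph{before} the Step-$j$ slide the two trapezoids $T_{2i-1}^{j-1}$ and $T_{2i}^{j-1}$ are \emph{not} adjacent: their shared-slope edges are parallel but separated by a vertical gap, so the slide cannot be read off the shapes alone. The corner formula supplies precisely the missing datum, namely the vertical offset $a_{2i-1}^{j-1}-a_{2i}^{j-1}=\tfrac{2^{j-1}}{2^M}$.

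With this in hand the remaining items become computations with explicit lines. For (i), writing the upper sides as $y=a_{2i-1}^{j-1}+s_+x$ and (after sliding up by $\Delta$) $y=a_{2i}^{j-1}+\Delta+s_mx$ and imposing that they meet at $x=1-\tfrac{j}{M}$ gives $\Delta=(a_{2i-1}^{j-1}-a_{2i}^{j-1})-\tfrac{1}{2^{M-j+1}}\bigl(1-\tfrac{j}{M}\bigr)=\tfrac{2^{j-1}}{2^M}-\tfrac{2^{j-1}}{2^M}\bigl(1-\tfrac{j}{M}\bigr)$, which is the asserted $\tfrac{j}{M}\cdot\tfrac{2^{j-1}}{2^M}$. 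For (ii), I would compute the new lower-left corner of $T_i^j$ as the slid lower side of $T_{2i}^{j-1}$ at $x=0$, namely $a_{2i}^{j-1}-L_{j-1}+\Delta$, and subtract from $a_{2i-1}^{j-1}$ to obtain the recursion $L_j=L_{j-1}+\tfrac{2^{j-1}}{2^M}\bigl(1-\tfrac{j}{M}\bigr)$; a short induction then confirms the closed form $\bigl(1-\tfrac{j-1}{M}\bigr)\tfrac{2^j}{2^M}-\tfrac1M\tfrac{1}{2^M}$.

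For (iii) and (iv) I would analyze the strip $1-\tfrac{j}{M}\le x\le 1-\tfrac{j-1}{M}$, where the overlap of the (slid) trapezoids produces $P_i^j-A_i^j$. At $x=1-\tfrac{j}{M}$ the two pieces, now of equal width, coincide in the right base of $T_i^j$, so the vertical side of $P_i^j$ equals that right base $R_j=L_j-\tfrac{2^j}{2^M}\bigl(1-\tfrac{j}{M}\bigr)=\tfrac{2^j-1}{M2^M}$, which is (iii). Parametrizing by $t=x-(1-\tfrac{j}{M})$ and comparing the two width-$R_j$ intervals, whose tops and bottoms move with the slopes $s_+<s_m<s_-$, the union fills the whole parallelogram until the top of the upper piece drops below the bottom of the lower piece, after which a triangular gap opens; its vertical size at the right end $x=1-\tfrac{j-1}{M}$ is $2(s_m-s_+)\tfrac1M-R_j=\tfrac{2^j}{M2^M}-\tfrac{2^j-1}{M2^M}=\tfrac{1}{M2^M}$, giving (iv). The main obstacle is exactly the point flagged above: correctly locating the two trapezoids relative to each other before sliding, since the naive edge-to-edge assumption produces the wrong slide length once $j\ge 2$; the clean resolution is the observation that upper pieces are never moved, after which (i)–(iv) reduce to the elementary line intersections and interval comparisons sketched here.
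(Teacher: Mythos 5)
Your proof is correct: the inductive bookkeeping checks out (in particular $a_i^j=-2^j(i-1)/2^M$, the offset $a_{2i-1}^{j-1}-a_{2i}^{j-1}=2^{j-1}/2^M$, the recursion $L_j=L_{j-1}+\frac{2^{j-1}}{2^M}(1-\frac{j}{M})$, and the widths $R_j=\frac{2^j-1}{M2^M}$ and $2(s_m-s_+)\frac1M-R_j=\frac{1}{M2^M}$ all agree with the stated formulas). The paper omits this proof as ``straightforward,'' so there is nothing to compare against; your argument, including the key observation that odd-indexed pieces are never slid, is a valid and complete way to supply the omitted computation.
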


\begin{proof}
    The proofs of (\textit{i})$-$(\textit{iv}) are straightforward, 
so we omit them here. 
\end{proof}

In what follows, 
we use the notation $T_{i}$ 
(resp. $T_{i}^{(\rk)}$, $P_{i}^{\rk}$, $A_{i}^{\rk}$) 
to denote the final position (as a set) of 
$T_{i}^{(0)}$ 
(resp. $T_{i}^{(\rk)}$, $P_{i}^{\rk}$, $A_{i}^{\rk}$) 
in the construction.  
Therefore, 
\eq{eq:Em=UTi}{\Eb_{\n}=\bigcup_{i=1}^{2^M} T_{i}.}
The upper side of the triangle $T_{i}$ is a line segment 
\begin{equation}
\label{eq:line-eqn}
\ell_{\alpha_i}(x):=\alpha_i x + \beta_i,\quad0\le x\le 1,
\end{equation}
where $i=1,\cdots, 2^\n,$ 
\begin{align}
\alpha_i &= \frac{i-1}{2^\n} 
=\sum_{k=1}^\n \frac{\varepsilon_k}{2^k},\quad \varepsilon_k \in \{0,1\}, \notag \\
\beta_i &= -\alpha_i
+ \sum_{k=1}^\n \varepsilon_k \,\frac{2^{\n-k}}{2^\n}\left(1-\frac{k-1}{\n}\right)
= -\frac{1}{\n}\sum_{k=1}^\n \frac{(k-1)\varepsilon_k}{2^k}. \notag 
\end{align}
Note that by Proposition \ref{prop:EM-observe}\,($ii$) (with $j=M$), we have 
\begin{equation}
\label{eq:b>-1/M}
-\frac1\n<\beta_i\le 0. 
\end{equation} 
The upper side of $P^j_i=P^j(\alpha)$ (see \eqref{eq:Pij-slope}) 
is given by $\ell_{\alpha}$ defined above. 
Correspondingly, 
the upper side of $T^{(j)}_i$ 
and the lower side of $A^j_i$ are given by $\ell_{\alpha-\epsilon_j}$, 
where 
\begin{equation}
\label{eq:epsilon_j}
\epsilon_j:=\frac{1}{2^{M-j+1}},\quad j=1,\cdots,M.
\end{equation}

\smallskip

The $\rk$-\textit{th band of} $\Eb_{\n}$ is defined by 
$$\Eb_{\n}^{(\rk)}=\Eb_{\n}\cap
\left\{(x,y): x_j\le x\le x_{j-1},\, y\in\mathbb R\right\},
\quad \rk=1,\cdots, \n.$$
Clearly, 
$$\mathbf E_M=\bigcup_{j=1}^M \mathbf E^{(j)}_M.$$
Note that by construction,  
\begin{equation}
\label{eq:E=P-A}
\Eb_{\n}^{(\rk)} 
= \bigcup_{i=1}^{2^{\n-\rk}} \big(P_{i}^{\rk}-A_{i}^{\rk}\big).
\end{equation}
In particular, we have 
\begin{equation}
\label{eq:E-in-P}
\Eb_{\n}^{(\rk)}\subset\Pb_{\n}^{(\rk)}:=
\bigcup_{i=1}^{2^{\n-\rk}} P_{i}^{\rk}, 
\end{equation}
and, consequently, 
\begin{equation}
\label{eq:Em-in-Pm}
\Eb_{\n}\subset\Pb_M:=\bigcup_{j=1}^M \Pb^{(j)}_M.
\end{equation}

\smallskip

The following proposition shows that $\Pb_{\n}$ has a Perron-tree structure, 
with $P_{i}^{\rk}$ representing the `branches' 
and $A_{i}^{\rk}$ representing the `bifurcations'. 
Note that, 
for $2\le j\le M$, each $P^j_i=P^j(\alpha)$ in $\Pb_{\n}^{(\rk)}$ is connected to two parallelograms 
$P^{j-1}(\alpha\pm\epsilon_{j-1})$ in $\Pb_{\n}^{(\rk-1)}$. 

\begin{proposition}\label{prop:Perron}
For each $\rk=1,\cdots, \n$, 
the parallelograms $P_i^{\rk}\subset \Pb_{\n}^{(\jk)}$ 
$(i=1,\cdots, 2^{\n-\jk})$ are disjoint and 
arranged monotonically \emph{(}ascendingly\emph{)} 
with respect to slope. 
\end{proposition}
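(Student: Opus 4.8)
The monotonicity in slope is immediate from the construction: the parallelogram $P_i^\rk$ has slope $\frac{2i-1}{2^{\n-\rk+1}}$, and this quantity is strictly increasing in $i$, so ordering the $P_i^\rk$ by their index is the same as ordering them by slope. The substance of the proposition is therefore the disjointness, and my plan is to prove the stronger statement that $P_{i+1}^\rk$ lies entirely above $P_i^\rk$. Since all the $P_i^\rk$ sit in the common vertical band $\{1-\frac{\rk}{\n}\le x\le 1-\frac{\rk-1}{\n}\}$, have equal vertical side length $\frac{2^\rk-1}{\n 2^\n}$ (Proposition \ref{prop:EM-observe}(iii)), and have strictly increasing slopes, it suffices to control their heights along the left edge $x=c:=1-\frac{\rk}{\n}$. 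Writing $\gamma_i$ for the height of the top-left vertex of $P_i^\rk$, the point is that because $P_{i+1}^\rk$ is steeper than $P_i^\rk$, the single bound $\gamma_{i+1}-\gamma_i\ge \frac{2^\rk-1}{\n 2^\n}$ forces the bottom edge of $P_{i+1}^\rk$ to stay at or above the top edge of $P_i^\rk$ across the entire band.

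The key step is to locate the top edge of $P_i^\rk$ in the final configuration in terms of the explicit lines \eqref{eq:line-eqn}. In Step $\rk$ the parallelogram $P_i^\rk$ is cut off to the right of $x=c$, where the upper side of the steeper trapezoid $T_{2i}^{\rk-1}$ overtakes that of $T_{2i-1}^{\rk-1}$; thus its top edge lies on the upper side of $T_{2i}^{\rk-1}$, a line of slope $\frac{2i-1}{2^{\n-\rk+1}}=\alpha_{t(i)}$ with $t(i)=(2i-1)2^{\rk-1}+1$, the least-steep constituent triangle of the upper half $S_{2i}^{\rk-1}$. I would first record the structural fact — established along with the construction — that inside each trapezoid the upper sides of all constituent triangles meet at the right-hand base, so that the least-steep one is the actual upper envelope; this identifies the top edge of $P_i^\rk$ with the final line $l_{t(i)}$ and gives $\gamma_i=l_{t(i)}(c)$. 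A short check using the formulas for $\alpha_t,\beta_t$ confirms the geometry is consistent: $l_{t(i)}$ and $l_{t'(i)}$ with $t'(i)=(i-1)2^\rk+1$ cross exactly at $x=c$, matching the Step-$\rk$ intersection.

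It then remains to evaluate $\gamma_{i+1}-\gamma_i$. Since $t(i+1)=t(i)+2^\rk$, both the slope and the intercept differences are explicit: the slope contribution is $\frac{2^\rk}{2^\n}(1-\frac{\rk}{\n})$, while $\beta_{t(i+1)}-\beta_{t(i)}$ is a finite binary-carry computation whose carries mirror the increment $i-1\mapsto i$, so it is governed by $\nu:=v_2(i)$. Carrying out this bookkeeping collapses the two contributions to the clean value $\gamma_{i+1}-\gamma_i=\frac{2^\rk(2^{\nu+1}-1)}{\n 2^\n}$, which is at least $\frac{2^\rk}{\n 2^\n}>\frac{2^\rk-1}{\n 2^\n}$ since $\nu\ge 0$; this is exactly the separation required above, completing the disjointness. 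The main obstacle is the second paragraph: rigorously pinning the top edge of $P_i^\rk$ to the specific final line $l_{t(i)}$ through all the later vertical slides, which rests on the trapezoid-closure structure of the construction. Once that identification is in hand, the inequality in the third paragraph is a bounded and routine computation.
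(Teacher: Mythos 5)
Your proof is correct and follows essentially the same route as the paper's: both reduce disjointness to the vertical separation, at the left edge $x=1-\frac{\rk}{\n}$, of the explicit lines $l_t$ from \eqref{eq:line-eqn} attached to adjacent parallelograms, and both arrive at the same separation (your $\frac{2^{\rk}(2^{\nu+1}-1)}{\n 2^{\n}}$ with $\nu=v_2(i)$ is exactly the paper's $\frac{2}{\n}\frac{1}{2^{k_0}}-\frac{1}{\n}\frac{1}{2^{\n-\rk}}$ under $k_0=\n-\rk-\nu$), compared against the width $\frac{2^{\rk}-1}{\n 2^{\n}}$ from Proposition \ref{prop:EM-observe}(iii). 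You are merely more explicit than the paper about pinning the top edge of $P_i^{\rk}$ to the specific line $l_{t(i)}$ and about why checking the left edge suffices.
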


\begin{proof} 
The case $j=M$ is trivial 
since there is only one parallelogram in $\Pb_{\n}^{(M)}$. 
So we assume $j\le M-1$ below. 
By \eqref{eq:Pij-slope}, the slopes of any two parallelograms in $\Pb_{\n}^{(\jk)}$ differ by a multiple of $\frac{1}{2^{\n-\jk}}$. 
Let 
$$\alpha=\displaystyle \sum_{k=1}^\n \frac{\varepsilon_k}{2^k},\quad \widetilde{\alpha}=\displaystyle \sum_{k=1}^\n \frac{\widetilde\varepsilon_k}{2^k}$$ 
be the slopes of two such parallelograms. 
Without loss of generality, assume that they are adjacent, that is, 
$$\alpha-\widetilde \alpha= \frac{1}{2^{\n-\jk}}.$$ 
Let ${k_0} \in \{1, 2, \cdots, \n-\jk \}$ be the binary place such that 
$$
\begin{cases}
\varepsilon_k=\widetilde\varepsilon_k, & k=1, 2, \cdots, {k_0}-1; \\
\varepsilon_{k_0}=1,\;\widetilde\varepsilon_{k_0}=0;&\ \\
\varepsilon_k=0,\; \widetilde\varepsilon_k=1, &k={k_0}+1, \cdots, \n-\jk. 
\end{cases}
$$
Then, by \eqref{eq:line-eqn},
\begin{align*}
\ell_\alpha\big(x_j\big)-\ell_{\widetilde \alpha}\big(x_j\big)
&=\frac{\n-\jk-{k_0}+1}{\n} \frac{1}{2^{k_0}} - \displaystyle \sum_{k={k_0}+1}^{\n-\jk} \frac{\n-\jk-k+1}{\n} \frac{1}{2^k} \\
&=\frac{2}{\n}\frac{1}{2^{k_0}}-\frac{1}{\n}\frac{1}{2^{\n-\jk}}. 
\end{align*}
Since ${k_0} \leq \n-j$, it follows that 
\begin{align*}
\ell_\alpha\big(x_j\big)-\ell_{\widetilde\alpha}\big(x_j\big) 
\ge \frac{1}{\n}\frac{1}{2^{k_0}}
>\frac1\n \frac{1}{2^{\n-j}}-\frac{1}{\n}\frac{1}{2^\n}.
\end{align*}
By Proposition \ref{prop:EM-observe}\,($iii$), this implies that the lower-left vertex of the parallelogram $P^j(\alpha)$ lies strictly above the upper-left vertex of $P^j(\widetilde\alpha)$ (cf. Figure \ref{fig:E}). 
Since the vertical distance between $P^j(\alpha)$ and $P^j(\widetilde\alpha)$ is minimized at $x=x_j$, and since the adjacent pair $(\alpha,\widetilde\alpha)$ is arbitrary, 
we thus conclude that the parallelograms in $\Pb_{\n}^{(\jk)}$ are arranged ascendingly with respect to slope. 
This completes the proof of 
Proposition \ref{prop:Perron}. 
\end{proof} 

\begin{remark}
\label{rmk:Perron}
We make two observations from the proof above. 

\noindent $({i})$ The vertical distance between adjacent parallelograms $P^j(\alpha), P^j(\widetilde\alpha)$ in $\Pb^{(j)}_M$ 
is attained at $x=x_j$, and is given by 
\begin{equation}
    \label{eq:vertical-gap}
    \frac{2}{\n}\frac{1}{2^{k_0}}-\frac{2}{\n}\frac{1}{2^{\n-\jk}}+\frac{1}{\n}\frac{1}{ 2^\n};
\end{equation}
in particular, it is at least 
$\frac{1}{\n}\frac{1}{2^{\n}}$, 
and does not exceed $\frac{1}{\n}$. 

\noindent $({ii})$ 
When $\alpha-\widetilde{\alpha}={2^{-M}}$, by a similar argument, one obtains  
\begin{equation} \label{eq:intercept-difference} \ell_{\alpha}(0) - \ell_{\widetilde{\alpha}}(0) 
= \frac{2^{\n-{k_0}+1}-\n-1}{\n \cdot 2^\n};  
\end{equation}
in particular, 
$\vert \ell_{\alpha}(0) - \ell_{\widetilde{\alpha}}(0) \vert \geq \frac{1}{M\cdot 2^M }$ always holds if $M$ is even. 
\end{remark}

For $0\le x\le 1$, denote the cross-sections (see \eqref{eq:E|x})
$$\Eb_{\n}^x=\Eb_{\n}|_x,\quad  
\Pb_{\n}^x=\Pb_{\n}|_x,$$ 
and 
\eq{eq:ExPxTx}{
T_i^x=T_i|_x\quad (i=1,\cdots,2^M).
}
When $x$ is fixed, 
we shall often view these cross-sections as subsets of $\mathbb R$. 
Clearly, 
$$\Eb_{\n}^x=\bigcup_{i=1}^{2^M} T_{i}^x$$ 
is a union of $2^M$ many intervals of length $\frac{1-x}{2^M}$. 
When $x_j<x\le x_{j-1}$, 
by \eqref{eq:E-in-P} and Proposition \ref{prop:EM-observe}\,($iii$), $\Eb_{\n}^x\subset\Pb_{\n}^x$ can be covered by $2^{M-j}$ many intervals 
of length $\frac{2^\rk}{\n \cdot 2^\n}-\frac{1}{{\n}\cdot 2^\n}$. Therefore  
\begin{align}
\label{eq:1/N}
|\Eb_{\n}^x|\le |\Pb_{\n}^x|< 2^{\n-\jk} \, \frac{2^{\jk}}{\n \cdot 2^{\n}}
=\frac{1}{\n}. 
\end{align} 
Note that \eqref{eq:1/N} holds uniformly for $j=1, \cdots, M$.  
Integrating \eqref{eq:1/N} over $x$ yields 
$\vert \Eb_{\n} \vert <\frac{1}{\n}$.  As shown in \cite{Keich1999}, 
this is sufficient for bounding the generalized Minkowski dimension of the corresponding Besicovitch sets (see Proposition \ref{proposition of Fn}\,($iii$) for details). 

\begin{remark}
\label{rmk:eta-connected}
Let $P^j(\alpha)$ be a parallelogram in $\Pb_{\n}^{(\rk)}$. 
For $0\le x\le x_j$, consider the intervals
\begin{equation}
\label{eq:Tx-intervals}
\big\{T_i^x: T_i\cap P^j(\alpha)\neq\varnothing\big\}.
\end{equation}
By the construction of $\Eb_M$ (cf. Figure \ref{fig:E}), 
the union of these intervals is the cross-section of the trapezoid $T^{(j)}_i$, 
thus is connected. 
Moreover, by Corollary \ref{cor:eta-connected} and induction on $j$, 
the interval family \eqref{eq:Tx-intervals} is $\frac{1}{M\cdot 2^M}$-connected in the sense of Definition \ref{def:eta-connected}. 
\end{remark} 

In order to study the generalized Hausdorff dimension, 
we shall need further distributional properties of $\Eb_{\n}^x$.  
In what follows, denote 
\begin{equation}
\label{eq:rj-eqn}
r_j=\frac{2^j}{\n\cdot 2^\n}.
\end{equation} 
Note that $r_0=\frac{1}{{\n}\cdot 2^\n}$ 
and that $r_j = 2^j r_0$ grows dyadically with $j$. 
The relevance of the scales 
$r_j\, (j=1,\cdots, M)$ 
is that they approximately give 
the vertical widths of the parallelograms 
in $\Pb^{(j)}_M$ (only off by $r_0$ according to Proposition \ref{prop:EM-observe}\,($iii$)). 

The following proposition shows that each cross-section
$\Eb_{\n}^x\subset\Eb_M^{(j)}$ forms a Cantor-like set 
at scale $r_j$ and above (up to $x_{j-1}$). 
For convenience, we shall work with $\Pb_{\n}^x$ in place of $\Eb_{\n}^x$. 
Let $I\subset\mathbb R$ be an open interval, 
denote by $N_x(I)$ the number of connected components of $\Pb_{\n}^x\cap I$. 
A connected component of the complement
$\mathbb R\backslash \Pb_{\n}^x$ will be called a 
\textit{gap of} $\Pb_{\n}^x$ 
and denoted by $w$ (which could be infinite).

\begin{proposition}
\label{prop NI}
Suppose $x_j< x\le x_{j-1}$ $(j=1,\cdots, M-1)$.
Let $I$ be an open interval with 
$r_j\le|I|\le x_{j-1}$, 
and let $w_{max}$ be the largest gap(s) of $\Pb_{\n}^x$ that intersects $I$. 
If $ \vert I \cap w_{max} \vert\leq \frac{\vert I \vert}{2}$, then 
\begin{equation}
\label{eq:N(I)-bounds}
\ConstOne\,
\dfrac{\frac{\vert I \vert}{r_j}}{\log \frac{\vert I \vert}{r_j}} \leq N_x(I) 
\leq 
\ConstTwo\,
\dfrac{\frac{\vert I \vert}{r_j}}{\log \frac{\vert I \vert}{r_j}}. 
\end{equation}
\end{proposition}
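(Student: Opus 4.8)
The plan is to recognize the cross-section $\mathbf E_M^x$ as a balanced-binary (Perron-tree) Cantor set at scale $r_j$ and then to sandwich $N_x(I)$ between the slab-counts of the two canonical blocks of this set whose widths bracket $|I|$; the estimate \eqref{eq:N(I)-bounds} is then nothing but the statement that such a block of radius $W$ carries about $(W/r_j)/\log(W/r_j)$ slabs.

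First I would make the gap structure quantitative. By Proposition \ref{prop:Perron} the set $\mathbf E_M^x$ consists of $2^{M-j}$ slabs of width $\asymp r_j$, listed in increasing order of the slopes $\tfrac{2i-1}{2^{M-j+1}}$. Passing from one slab to the next increments the index by one, and the binary place $k_0$ at which two adjacent slopes first differ equals $M-j$ minus the number $s$ of trailing $1$'s of the smaller index. Substituting $k_0=(M-j)-s$ into \eqref{eq:vertical-gap} I would record that a gap \emph{of level $s$} has size
$$\mathrm{gap}(s)=\frac{2^{j+1}(2^{s}-1)+1}{M\,2^{M}}\asymp 2^{s+1}r_j,\qquad s=0,1,\dots,M-j-1,$$
and that there are $2^{M-j-1-s}$ of them; in particular the level-$0$ gaps are tiny ($\approx 2^{-j}r_j$, so slabs come in touching pairs) and larger gaps are geometrically rarer. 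This is precisely the by-depth profile of the internal nodes of a balanced binary tree. Calling a maximal run of slabs whose internal gaps all have level $\le s$ a \emph{level-$s$ block}, such a block is a subtree with $n(s)=2^{s+1}$ slabs, and summing its slab-widths and gaps gives the clean identity $W(s)=r_j\big(s\,2^{s+1}+2-2^{-j}\big)\asymp s\,2^{s+1}r_j$. Thus $n(s)\asymp (W(s)/r_j)/\log(W(s)/r_j)$, which is the heart of the matter: inside a block of radius $W(s)$ the slabs have density $\asymp 1/s\asymp 1/\log$, and both inequalities in \eqref{eq:N(I)-bounds} amount to transferring this density to the arbitrary window $I$.

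For the lower bound I would argue as follows. Since $w_{max}$ is the largest gap meeting $I$, no gap of level exceeding its level $s_0$ meets $I$; as consecutive level-$s_0$ blocks are separated by gaps of level $>s_0$, the window $I$ lies inside a single level-$s_0$ block $B=B^-\cup g\cup B^+$, where $g$ is $B$'s unique central (level-$s_0$) gap and $B^\pm$ are level-$(s_0-1)$ blocks. Because $|I|\ge 2|w_{max}|=2|g|$, the part of $I$ lying to each side of $g$ is a prefix of $B^+$ (resp.\ a suffix of $B^-$)—i.e.\ it abuts a block edge—and these two pieces have total length $\ge|I|/2$. The key observation, which avoids any loss from misalignment, is that a prefix of a block of length $\ge W(s_1)$ automatically contains the entire leftmost level-$s_1$ subtree. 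Choosing $s_1$ maximal with $W(s_1)$ at most the relevant piece-length and summing the slab-counts of the resulting aligned block(s) gives $N_x(I)\ge n(s_1)=2^{s_1+1}$; since $W(s_1+1)$ exceeds that length while $s_1+1\le\log(|I|/r_j)$, tracking the constants (using both sides of $g$ in the worst, balanced, case) yields the stated $N_x(I)\ge\tfrac18\,(|I|/r_j)/\log(|I|/r_j)$.

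For the upper bound I would instead take $s_2$ minimal with $W(s_2)\ge|I|$. Level-$s_2$ blocks then have width $\ge|I|$ and are separated by positive gaps, so a window of length $|I|$ can meet at most two of them; hence $N_x(I)\le 2\,n(s_2)=2^{s_2+2}$, and since $W(s_2-1)<|I|$ forces $s_2\gtrsim\log(|I|/r_j)$ this gives $N_x(I)\le 16\,(|I|/r_j)/\log(|I|/r_j)$. The genuinely delicate step—the main obstacle—is the first one: establishing the exact gap sizes and block widths together with the balanced-tree organization, and doing so uniformly for \emph{every} $x$ in the band $1-\tfrac{j}{M}\le x\le 1-\tfrac{j-1}{M}$, not merely at the left edge $x=1-\tfrac{j}{M}$ where Proposition \ref{prop:Perron} applies verbatim. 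Across the band the slab positions are affine in $x$, so each gap varies by at most $\approx r_j$; the level-$(\ge 1)$ gaps and hence the widths $W(s)$ stay comparable, while the level-$0$ gaps may open or close and so change $N_x(I)$ by at most a bounded factor. Carrying the absolute constants through this perturbation, and checking the few small values of $|I|/r_j$ by hand, is where the bookkeeping lies; the constants $\tfrac18$ and $16$ are chosen with exactly that slack in mind.
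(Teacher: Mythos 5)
Your proposal is correct and follows essentially the same route as the paper: the recursive gap profile you derive is exactly the paper's sequence $W^{(j)}_{2^k-1}$, your level-$s$ blocks of width $W(s)\asymp s\,2^{s+1}r_j$ are the paper's canonical intervals $I_k^{(j)}$ with $N_x(I_k^{(j)})=2^{k-1}$, and both arguments transfer the block count to a general window via the largest gap and the hypothesis $|I|\ge 2|w_{max}|$. The only cosmetic differences are that, for the lower bound, the paper counts the slabs lying between $w_{max}$ and the second-largest gap while you count aligned sub-blocks on either side of $w_{max}$, and that both proofs absorb the dependence on $x$ within the band into a factor $2$ coming from the closing of the level-$0$ gaps.
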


\begin{proof}
By Proposition \ref{prop:Perron}, 
the gaps of $\Pb^x_M$ increase as $x$ increases from $x_j$ to $x_{j-1}$. 
Therefore, it suffices to prove the proposition for 
$x=x_j^+$ 
(the general case follows by monotonicity considerations). 

To be more precise, when $x=x_j^+$, $N_{x_j^+}(I)$ is understood as the number of connected components of $\Eb_{\n}^{x_j}\cap I$. 
For simplicity, we denote $C_j=\Eb_{\n}^{x_j}$. 
Note that 
$C_j$ is a union of $2^{\n-\jk}$ many disjoint intervals of length $r_j-r_0$. 
By Remark \ref{rmk:Perron}\,($i$), 
the gaps between the adjacent intervals in $C_j$ have size given by \eqref{eq:vertical-gap}. 
It follows that (the sizes of) these gaps, ordered vertically, 
form an automatic sequence given by the final outcome of the iteration: 
$$
\begin{cases}\label{sequence}
W_1^{(\jk)}=\big\{r_0\big\},\\[1em]
W_{2^{\ell}-1}^{(\jk)}
=\left\{W_{2^{\ell-1}-1}^{(\jk)},\, 
2^\ell r_j- 2r_j + r_0,\,
W_{2^{\ell-1}-1}^{(\jk)} \right\},\quad 
\ell=2,\cdots, \n-\jk. 
\end{cases}
$$
Note that the subscript is chosen so that 
$\#W_{2^{\ell}-1}^{(\jk)}=2^{\ell}-1$. 
Note also that $$\max W_{2^{\ell}-1}^{(\jk)}= 2^{\ell} r_j- 2r_j + r_0$$ 
grows with $\ell$, 
and is comparable to 
$2^{\ell-1} r_j$ 
when $\ell\ge 2$ (cf. Figure \ref{fig:loopback}).
\begin{figure}[H]
    \centering
\includegraphics[width=0.8\linewidth]{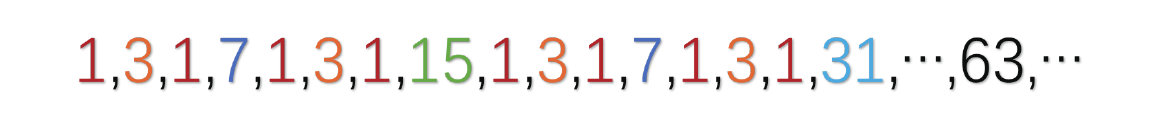}
    \caption{The gaps in $C_0$, scaled by $M\cdot 2^M$ ($M\ge 5$)}    \label{fig:loopback}
\end{figure}
Now consider the open interval $I_\ell^{(\jk)}\, (\ell=1,2,\cdots, \n-\jk+1)$ 
which has the same lowest endpoint with $C_j$ and length $\ell\cdot 2^{\ell-1} r_j$ (see Figure \ref{fig:interval}). 
Using the automatic sequence above, it is easy to verify that 
$$N_{x_j^+}\big(I_\ell^{(\jk)}\big)
=2^{\ell-1}.$$ 
Therefore, 
$$
\frac{\frac{|I_\ell^{(\jk)}|}{r_j}}{\log\frac{|I_\ell^{(\jk)}|}{r_j}}
\le N_{x_j^+}\big(I_\ell^{(\jk)}\big)
\le 2\frac{\frac{|I_\ell^{(\jk)}|}{r_j}}{\log\frac{|I_\ell^{(\jk)}|}{r_j}}.$$
This shows that 
\eqref{eq:N(I)-bounds} holds when $I=I_\ell^{(\jk)}$. 
The general case can be reduced to this case using standard doubling and self-similarity arguments; 
we omit the details.  
\end{proof}

\begin{remark}
\label{rmk:NI-condition}
\noindent $({i})$ The proof above shows that the upper bound in \eqref{eq:N(I)-bounds} holds as long as $|I|\ge r_j$; 

\noindent $({ii})$ The condition $\vert I \cap w_{max} \vert\leq \frac{\vert I \vert}{2}$ is satisfied in particular when $c(I)\in \Pb_\n^x$, 
where $c(I)$ denotes the center of $I$. 
\end{remark}

  \begin{figure}[ht]
  \centering
    \begin{minipage}[b]{0.5\textwidth}
    \centering
    \includegraphics[width=\textwidth]{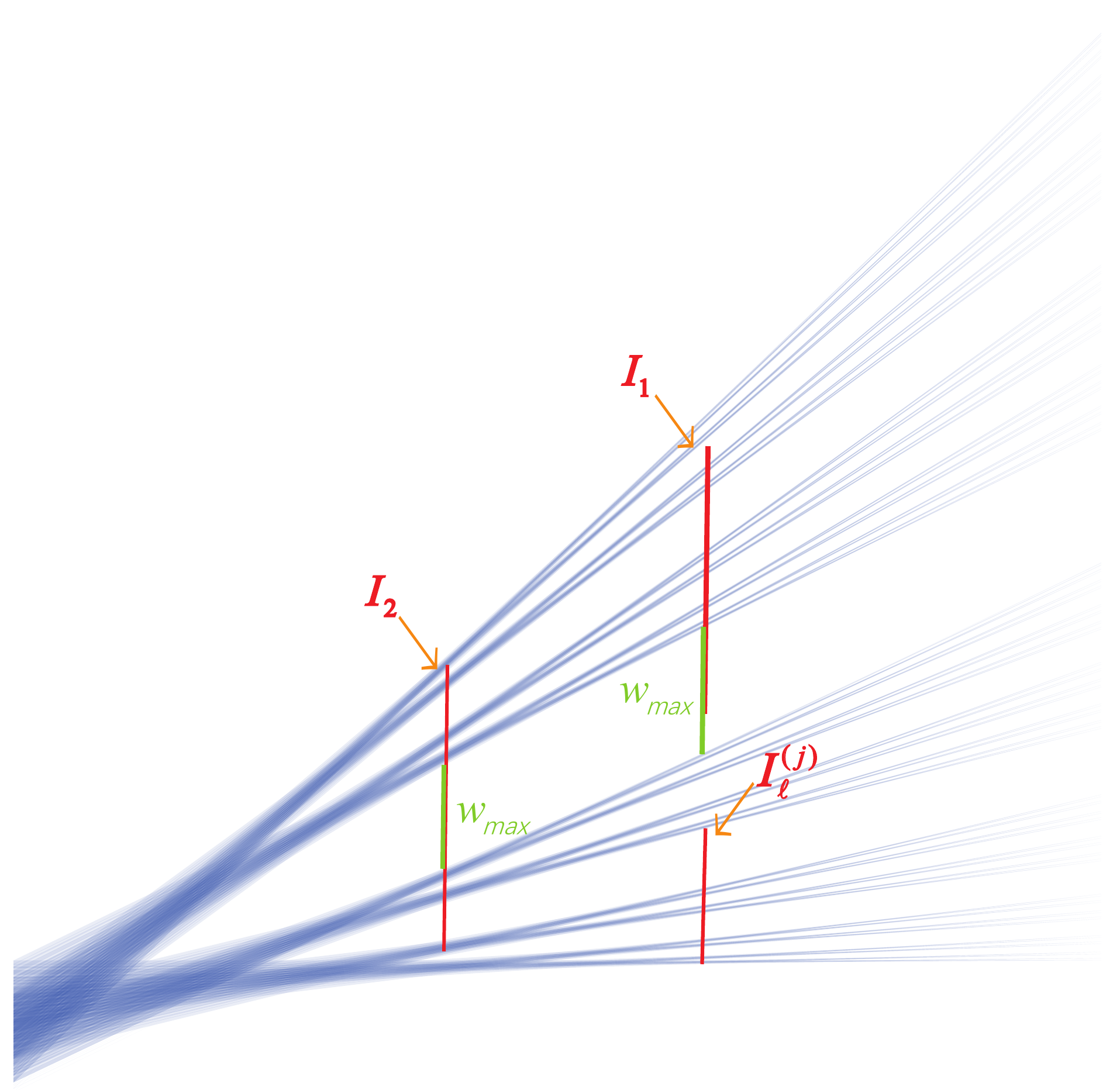}
    \caption{Possible positions of $I$} 
    \label{fig:interval}
    \end{minipage} 
    \begin{minipage}[b]{0.48\textwidth}
    \centering
    \includegraphics[width=\textwidth]{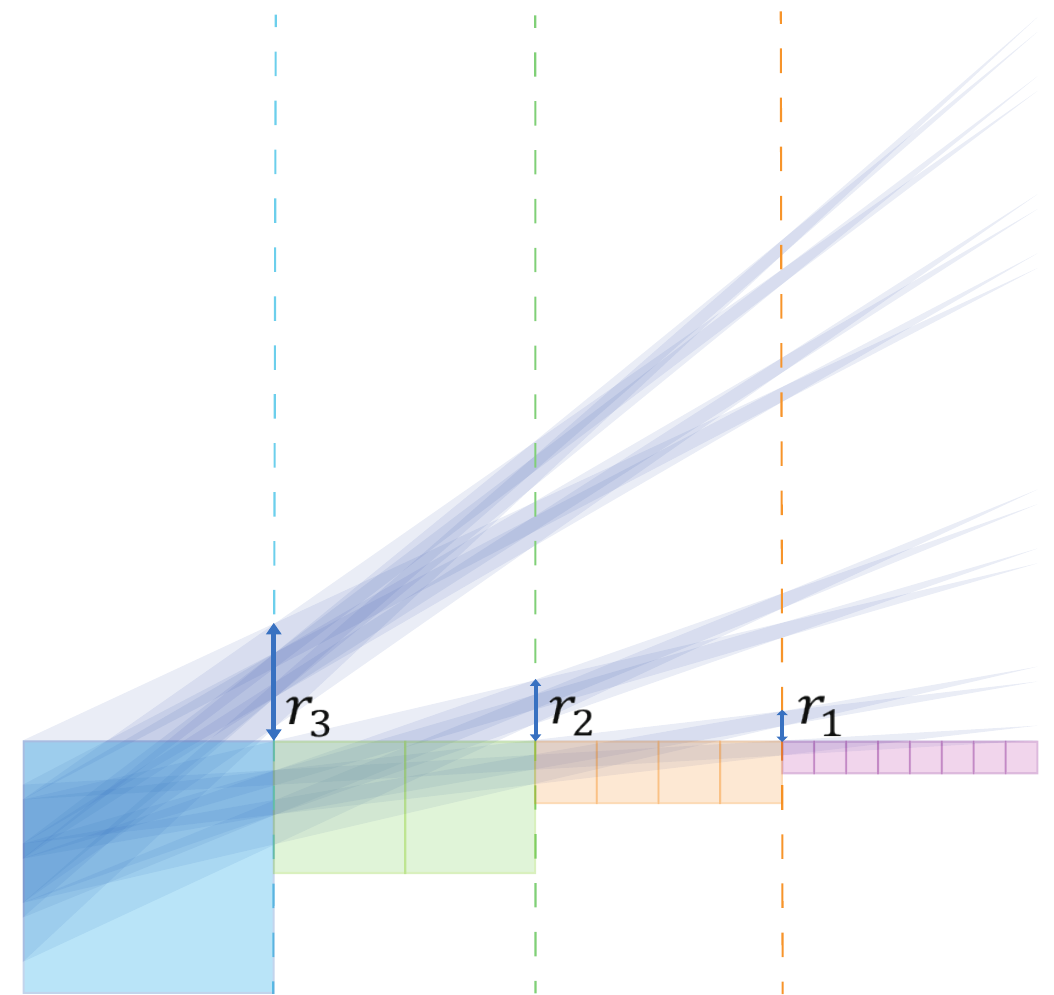}
    \caption{Adaptive cover of $\Eb_4$}
    \label{fig:adaptive}
    \end{minipage}
  \end{figure}  

We now turn to distributional properties of $\Eb_{\n}$. 
In what follows, 
$\n$ will always take the form
$\n=2^m$
for some integer $m\ge 1$. 
As in Section \ref{sec:prelim}, 
we shall be concerned with \textit{dyadic covers} of $\Eb_{\n}$, i.e. 
covers of the form $\{Q_i\}$, 
where the dyadic squares $Q_i$ are mutually disjoint 
and satisfy $Q_i\cap \Eb_{\n}\neq\varnothing$. 

Recall from \eqref{eq:rj-eqn} that 
$r_j=2^{-(M-j)-m}$, 
which is now a dyadic scale. 
Similarly, $x_j=1-\frac{j}{2^m}$ is now a dyadic rational. 

\begin{definition}[adaptive cover]
For $j=1,\cdots, \twoM$, 
cover $\Eb_{\twoM}^{(j)}$ by 
dyadic squares $\{Q_i^{(j)}\}$ 
with 
$$Q_i^{(j)}\subset
[x_j, x_{j-1}]\times\mathbb R,
\quad r(Q_i^{(j)})=r_{j},\quad \interior(Q_i^{(j)})\cap \Eb_{\twoM}^{(j)}\neq\varnothing.$$ 
Set $\{Q_i^{adp}\} = \bigcup_{j=1}^{\twoM} \{Q_i^{(j)}\}$. 
We call $\{Q_i^{adp}\}$ 
the \emph{adaptive cover of} $\Eb_{\twoM}$. 
\end{definition} 

More generally, 
a dyadic square $Q$ is called an \textit{adaptive square} if 
$$Q\subset
[x_j, x_{j-1}]\times\mathbb R\quad\text{and}\quad
r(Q)=r_{j}$$ 
for some $j\in\{1,\cdots,M\}$. 
A union of dyadic squares $F=\bigcup_i Q_i$ 
is called \textit{sub-adaptive} if each $Q_i$ either satisfies $r(Q_i)>\frac1M$ or satisfies 
$$Q\subset
[x_j, x_{j-1}]\times\mathbb R\quad\text{and}\quad  r(Q)\ge r_{j}$$
for some $j\in\{1,\cdots,M\}$.  

The adaptive cover 
$\{Q_i^{adp}\}$ is a quasi-optimal cover of $\Eb_{\twoM}$ 
with respect to the gauge 
$$h(r):=r^2 \log (1/r),$$ 
in the sense of Definition \ref{def:optimal-cover}. 
To show this, we first give a definition. 

\begin{definition}[adaptive measure] 
Given $\{Q_i^{adp}\}$, the 
\emph{adaptive measure of} $F\subset\mathbb R^2$ 
is defined by 
\begin{equation}
\label{eq:adaptive-measure}
\adp(F)=\sum_{Q\in\{Q_i^{adp}\}:\atop \interior(Q) \cap F \neq \varnothing} h(Q).
\end{equation}
\end{definition}
Note that for any adaptive square $Q$, 
$$\adp(Q)=\begin{cases}
h(Q),    &\text{if }Q\in\{Q_i^{adp}\}; \\
0,    &\text{otherwise}.
\end{cases}$$
More generally, for any sub-adaptive set 
$F$, we have 
\eq{eq:adp(F)}{
\adp(F)=\sum_{Q\in\{Q_i^{adp}\}:\, Q\subset F} h(Q). 
}
As a consequence, 
$\adp(\cdot)$ is finitely additive over sub-adaptive sets.   

Below 
we shall only be concerned with 
$\Eb_{\twoM}\cap\{\frac12\le x\le \frac34\}$. 
This is without loss of generality since one can use dilation to obtain line segments of length (at least) one and with slope ranging from 0 to 1. 
For simplicity, 
we still use $\Eb_{\twoM}$ to denote this part of $\Eb_{\twoM}$, 
in other words, 
\begin{equation}
\label{eq:13-34}
\Eb_{\twoM}=\Eb_{\twoM}\cap\Big(\Big[\frac12,\frac34\Big]\times\mathbb R\Big).
\end{equation} 
Similarly, we write 
$$\{Q_i^{adp}\}=\left\{Q\in\{Q_i^{adp}\}: 
Q\subset
\Big[\frac12, \frac34\Big]\times\mathbb R
\right\}.$$ 

Let $Q$ be a dyadic square, write 
\eq{eq:(2k+1)Q}{
(2k+1)Q=\bigcup_{\varepsilon\in\{0,\pm1,\cdots,\pm k\}} Q+(0, \varepsilon r(Q)),\quad k\ge 1.
}

\begin{proposition}
\label{prop:optimum cover of EM}
Let $Q^{(j)}\in \{Q_i^{adp}\}$. \\
\noindent$(i)$ \emph{(small squares)}. Let $\{U_k\}$ be a dyadic cover of 
$(5Q^{(j)})\cap \Eb_{\twoM}$. 
Then 
$$\adp (Q^{(j)})\leq 2\sum_{k} h(U_k).$$
\noindent$(ii)$ Let $U$ be a dyadic square with 
$Q^{(j)}\subset
U.$\\  
\indent\; $(a)$ \emph{(intermediate square)}. If  $r(U) \le \sqrt{r_j}$, 
then 
$$\adp (U) \leq 96\, h(U);$$ 
\indent\; $(b)$ \emph{(large square)}. If $\sqrt{r_j} \le r(U) \leq \frac14$, then we have 
$$
\adp (U) \leq 96\,\vert U \vert
\quad\text{and}\quad 
|U|\le 24\,\adp ({7}U).$$
\end{proposition}

\begin{proof} 
\begin{figure}[htb]
  \centering
  \includegraphics[width=0.5\linewidth]{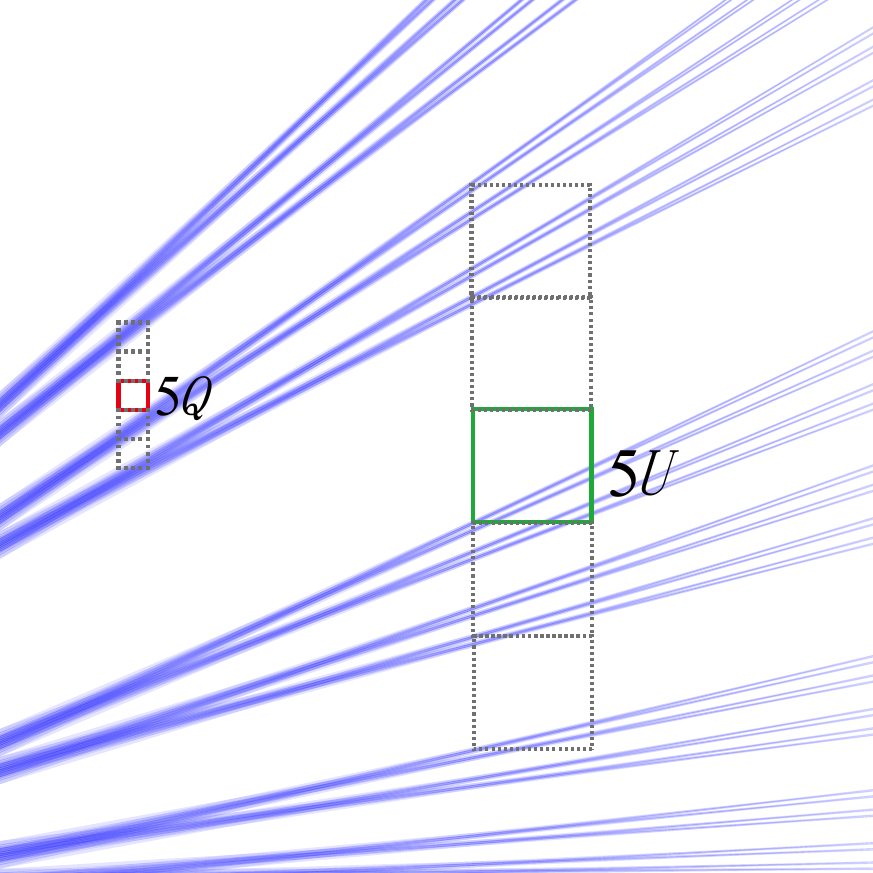}
  \caption{The squares $Q$ and $U$ are extended vertically in order to avoid `tangential' intersection}
  \label{fig:5Q-3U}
\end{figure}  
\noindent($i$) 
Since $\adp(Q^{(j)})=h(Q^{(j)})=h(r_j),$
it suffices to consider the case where $r(U_k)<r_j,\,\forall k$. 
Fix a parallelogram $P^j_i$ in $\Pb_{\twoM}^{(j)}$ that intersects $Q^{(j)}$. Since the slope of $P^j_i$ is between 0 and 1, there must be a segment of $P^j_i$ of horizontal width $r_j$ that is covered by $5Q^{(j)}$. 
Therefore,  
\begin{align*}
|(5Q^{(j)})\cap \Eb_{\twoM}|
&\ge |(5Q^{(j)})\cap P^j_i| - |(5Q^{(j)})\cap A^j_i|\\
&\ge r_j(r_j-r_0)-r_jr_0\\
&\ge \frac12 r_j^2\\
&=\frac12 |Q^{(j)}|. 
\end{align*}
Decompose $5Q^{(j)}$ into 5 squares $\widetilde Q$, 
and apply Lemma \ref{lem:leb-solidity} to each square. 
We obtain 
$$\sum_{k} h(U_k)\ge \frac12 h(Q^{(j)}).$$
This proves ($i$). 

\noindent($ii$) The case $M=2$ can be verified directly. So we assume $M\ge 4$ below. 
First, we consider the case $r(U) \leq \frac{1}{\twoM}$ (note that $\sqrt{r_j}\le \frac1M$). In this case, we have 
$$
Q^{(j)}\subset
U\subset [x_j,x_{j-1}]\times\mathbb R.
$$
Denote 
$$N(U)=\#\big\{P^j\subset\Pb_M^{(j)}: P^j\cap \interior(U)\neq \varnothing\big\}.$$ 
Notice that every adaptive square $Q_i^{(j)}\subset U$ must intersect a parallelogram $P^j\subset\Pb_M^{(j)}$ with $P^j\cap \interior(U)\neq\varnothing$. 
Notice also that no more than 3 adaptive squares $Q_i^{(j)}$ can intersect the same $P^j$ while being aligned vertically. 
Therefore, 
\eq{eq:(ii)-1}{\adp(U)\le 
3\,N(U)\, \frac{r(U)}{r_j}\, h(r_j).
} 
Let $x_{_U}$ be the $x$-coordinate of the right side of $U$, and let $I$ be the interior of the interval 
$$U|_{x_{_U}}\cup \big(U|_{x_{_U}}+r(U)\big).$$ 
Since the slope of each $P^j$ is between 0 and 1, we have 
\eq{eq:(ii)-2}{
N(U)\le N_{x_{_U}}(I).
}
On the other hand, by Proposition \ref{prop NI} (see Remark \ref{rmk:NI-condition}\,($i$)), 
\eq{eq:(ii)-3}{
N_{x_{_U}}(I)\le \ConstTwo\, 
\dfrac{\frac{2r(U)}{r_j}}{\log \frac{2r(U)}{r_j}}.
}
Combining \eqref{eq:(ii)-1}, \eqref{eq:(ii)-2}, and \eqref{eq:(ii)-3}, 
we obtain 
\eq{eq:(ii)-4}{
\adp(U)\le 
48\,r(U)^2\,\dfrac{\log\frac{1}{r_j}}{\log \frac{2r(U)}{r_j}}.
}
In particular, 
when $r(U)\le\sqrt{r_j}$, 
\eqref{eq:(ii)-4} gives 
$$\adp(U)\le 
96\,r(U)^2\log\frac{1}{r(U)}
=96\,h(U)$$ 
(using $\log R \ge 1$); 
when $r(U)\ge\sqrt{r_j}$, 
\eqref{eq:(ii)-4} gives
\eq{eq:(ii)-|U|}{
\adp(U)\le 
96\,r(U)^2
=96\,|U|
}
(using $\log \frac{2r(U)}{r_j}\ge \log\frac{1}{\sqrt{r_j}}$). 

We turn to a lower bound for $\adp(5U)$. Fix $(x_{_Q},y_{_Q})\in \interior(Q^{(j)})\cap \Eb_M^{(j)}.$ 
Let $I$ be the open interval of length $r(U)$ centered at $y_{_Q}$. Notice that every parallelogram $P^j$ intersecting $\{x_{_Q}\}\times I$ 
must be fully contained in $5U$ (i.e., $P^j$ does not intersect the top and bottom of $5U$). 
Therefore, 
$$\adp(5U)
\ge {N_{x_{Q}}(I)}\,\frac{r(U)}{r_j}\,
h(r_j).$$ 
On the other hand, by Proposition \ref{prop NI} (see Remark \ref{rmk:NI-condition}\,($ii$)), we have 
$$N_{x_{Q}}(I)\ge \ConstOne\,
\dfrac{\frac{r(U)}{r_j}}{\log \frac{r(U)}{r_j}}.$$ 
Combining the last two bounds, we obtain 
\eq{eq:mu(5U)}{
\adp(5U)\ge \frac{1}{24}\,r(U)^2\, 
\dfrac{\log\frac{1}{r_j}}{\log \frac{r(U)}{r_j}}
\ge \frac{1}{24}\,|U|, 
}
where we have used $r(U)\le 1$. 
This proves ($ii$) in the case $r(U) \leq \frac{1}{\twoM}$. 

Now we consider the case  $r(U)>\frac{1}{M}$. In this case only part ($b$) needs to be proved, 
since the case $r(U)\le\sqrt{r_j}\; (\le \frac1M)$ is ready covered above. 
For the first bound, we decompose 
$$
U=\bigcup_k\, U_k
$$
where each $U_k$ is a dyadic square with sidelength $\frac1M$. 
Note that, by definition, 
\eq{eq:(ii)-6}{
\adp(U)=\sum_k \adp(U_k). 
}
Apply \eqref{eq:(ii)-|U|} to each $U_k$ with $\adp(U_k)\neq 0$. 
We have  
$$\adp(U_k)\le 96\,|U_k|.$$ 
Thus, combining this with \eqref{eq:(ii)-6}, we obtain
$$\adp(U) 
\le 96 \sum_k |U_k|
= 96\,|U|,$$  
as desired. 

The proof for the second bound is similar to the case $r(U)\le\frac1M$ above. 
Fix $(x_{_Q},y_{_Q})$ as before, and fix a line segment $l\subset\Eb_M$ that passes through $(x_{_Q},y_{_Q})$. 
Decompose 
\begin{align*}
7U
&=\bigcup_{j}\,[x_j,x_{j-1}]\times (7U)|_{x_{_Q}}\\
&=:\bigcup_{j} R_j.
\end{align*}
For each $j$, fix a point 
$(x_{_{Q_j}},y_{_{Q_j}})\in l\cap \interior(R_j)$. 
Let $I_j$ be the open interval of length $r(U)$ centered at $y_{_{Q_j}}$. Then every parallelogram $P^j\subset\Pb_M^{(j)}$ that intersects $\{x_{_{Q_j}}\}\times I_j$ 
must be fully contained in $R_j$. Therefore, by Proposition \ref{prop NI}, we have 
\begin{align*}
\adp(R_j)
&\ge 
{N_{x_{Q_{j}}}(I_j)}\,\frac{1/M}{r_j}\,
h(r_j)\\
&\ge \frac{1}{24}\,
\dfrac{\frac{r(U)}{r_j}}{\log \frac{r(U)}{r_j}}\,\frac{1/M}{r_j}\,h(r_j)\\
&=\frac{1}{24}\,\frac{r(U)}{M}\,
\dfrac{\log\frac{1}{r_j}}{\log \frac{r(U)}{r_j}}\\
&\ge \frac{1}{24}\,\frac{r(U)}{M}.
\end{align*}
Summing over $j$, we obtain 
\begin{align*}
\adp(7U)
&=\sum_j\adp(R_j)\\
&\ge \frac{1}{24}\,\frac{r(U)}{M}\,\frac{r(U)}{1/M}\\
&=\frac{1}{24}|U|.
\end{align*} 
This proves ($ii$), and the proof of Proposition \ref{prop:optimum cover of EM} is complete. 
\end{proof}

\begin{remark} 
\label{remark:prop-3.9}
(i) Since $|U|\le h(U)$, it follows from Proposition \ref{prop:optimum cover of EM}\,(ii) that $$\adp(U)\le 96\,h(U)$$
holds for all dyadic squares $U\subset [x_j,x_{j-1}]\times\mathbb R$, with $r(U)\ge r_j$.\\ 
\noindent (ii) It follows from part (b) of Proposition \ref{prop:optimum cover of EM}\,(ii) that 
$$\adp(U)\le 96\,|U|$$
holds for all dyadic squares $U$ with $r(U)\ge \frac1M$.\\ 
\noindent
(iii) Applying the last bound in Proposition \ref{prop:optimum cover of EM} to 
$U=[\frac12,\frac34]\times [0,\frac14]$, one obtains $\adp(\Eb_M)\ge \frac{1}{384}$. 
Indeed, a more careful computation yields 
$$\frac{5}{32}<\adp(\Eb_\n) \leq \frac{3}{4}.$$ 
\end{remark}

Using Proposition \ref{prop:optimum cover of EM}, 
we can now apply the mass distribution principle to bound the $h$-measure of $\Eb_M$ from below. 
For simplicity, denote 
\eq{eq:c0}{
c_0=\inf_{M\ge 2}\,\adp(\Eb_\n).
}

\begin{theorem}
\label{thm:quasi-optimal}
For any $\delta\in(0,1)$, we have 
$$\mathcal M_{\delta}(\Eb_{\twoM};h)\ge \frac{c_0}{960}.$$ 
\end{theorem}

\begin{proof}
The case $M=2$ follows easily from $|\Eb_2|=\frac{1}{16}$ and 
$$\mathcal M_{\delta}(\Eb_{\twoM};h)\ge |\Eb_{\twoM}|.$$
So we assume $M\ge 4$ below. 
Recall from Section \ref{sec:prelim} that 
\eq{eq:M(Em)-0}{
\mathcal M_\delta(\Eb_M;h)=\sum_i h(Q_i^{opt}),
}
where $\{Q_i^{opt}\}$ is any (fixed) optimal $\delta$-cover of $\Eb_M$ with respect to $h$. 
Notice that, due to the optimality of $\{Q_i^{opt}\}$, 
each square $Q_i^{opt}$ satisfies 
$$\interior(Q_i^{opt})\cap \Eb_M\neq\varnothing$$
(otherwise, using the solidity of $\Eb_M$, $Q_i^{opt}$ can be removed from the cover). 
Therefore, we can classify $Q_i^{opt}$ into two cases: 
$$\begin{cases}
\text{Case 1: }\,r(Q_i^{opt})>\frac1M, \text{ or }&\ \\
\quad\quad\quad\;\,\,Q_i^{opt}\subset [x_j, x_{j-1}]\times\mathbb R, \quad r(Q_i^{opt})\ge r_j,&\text{where } \frac M4<j\le\frac M2;\\
\text{Case 2: }\,
Q_i^{opt}\subset [x_j, x_{j-1}]\times\mathbb R, \quad r(Q_i^{opt})< r_j,&\text{where } \frac M4<j\le\frac M2.
\end{cases}$$
To compare with the adaptive cover $\{Q^{adp}_i\}$, we classify $Q^{adp}\in\{Q^{adp}_i\}$ into two cases: 
$$\begin{cases}
\text{Case A: }\,
Q^{adp}\subset Q_i^{opt}, \text{ for some } Q_i^{opt}
\in\text{Case\,1;}\\ 
\text{Case B: }\,
Q^{adp}\supset Q_i^{opt}, \text{ for some } Q_i^{opt}\in\text{Case\,2.}
\end{cases}$$
Applying Proposition \ref{prop:optimum cover of EM}\,($ii$) (see Remark \ref{remark:prop-3.9}\,($i$),\,($ii$)) to each $Q_i^{opt}$ in $\text{Case\,1}$, we have 
\eq{eq:M(Em)-1}{
\sum_{Q_i^{opt}\in\text{\,Case\,1}} h(Q_i^{opt})
\ge \frac{1}{96} \sum_{Q_i^{opt}\in\text{\,Case\,1}} \adp(Q_i^{opt}). 
}
Since each $Q_i^{opt}$ in $\text{Case\,1}$ is sub-adaptive, by \eqref{eq:adp(F)} we have 
\begin{align}
\sum_{Q_i^{opt}\in\text{\,Case\,1}} \adp(Q_i^{opt}) 
&=\sum_{Q_i^{opt}\in\text{\,Case\,1}} \sum_{Q^{adp}\subset Q_i^{opt}} h(Q^{adp})\notag\\
&=\sum_{Q^{adp}\in\text{\,Case\,A}} h(Q^{adp}).
\label{eq:M(Em)-2}
\end{align}
On the other hand, 
for $Q_i^{opt}$ in $\text{Case\,2}$,  
we can write
\eq{eq:M(Em)-3}{
\sum_{Q_i^{opt}\in\text{\,Case\,2}} h(Q_i^{opt})
=\sum_{Q^{adp}\in\text{\,Case\,B}} \sum_{Q_i^{opt}\subset Q^{adp}} h(Q_i^{opt}). 
}
Combining \eqref{eq:M(Em)-1}, \eqref{eq:M(Em)-2}, and \eqref{eq:M(Em)-3}, we see that 
\begin{align}
\sum_i h(Q_i^{opt}) 
&=\sum_{\text{\,Case\,1}} h(Q_i^{opt})+\sum_{\text{\,Case\,2}} h(Q_i^{opt})\notag\\
&\ge \frac{1}{96}\sum_{\text{\,Case\,A}} \tilde\adp(Q^{adp})+\sum_{\text{\,Case\,B}} \tilde\adp(Q^{adp})\notag\\
&\ge \frac{1}{96}\sum_{Q^{adp}} \tilde\adp(Q^{adp}), \label{eq:M(Em)-4}
\end{align}
where 
\eq{eq:tilde-mu-def}{
\tilde\adp(Q^{adp})
=\begin{cases}
h(Q^{adp}), &\textit{if }\, Q^{adp}\in\text{Case\,A;}\\ \ \vspace{-0.5em}\\
\displaystyle\sum_{Q_i^{opt}\subset Q^{adp}} h(Q_i^{opt}), &\textit{if }\, Q^{adp}\in\text{Case\,B.}
\end{cases}
}
Now, writing 
\eq{eq:tilde-mu(F)-def}{
\tilde\adp(F)=\sum_{Q^{adp}\,:\atop \interior(Q^{adp})\cap F\neq\varnothing} \tilde\adp(Q^{adp}),
}
by Proposition \ref{prop:optimum cover of EM}\,($i$), we have 
\begin{align}
\sum_{Q^{adp}} \tilde\adp(Q^{adp})
&\ge\frac{1}{5} \sum_{Q^{adp}} \tilde\adp(5Q^{adp}) \notag \\
&\ge\frac{1}{10} \sum_{Q^{adp}} \adp(Q^{adp}) \notag \\
&=\frac{1}{10}\,\adp(\Eb_M). \label{eq:M(Em)-5} 
\end{align}
Combining \eqref{eq:M(Em)-0}, \eqref{eq:M(Em)-4}, and \eqref{eq:M(Em)-5}, we obtain 
$$\mathcal M_\delta(\Eb_M)
\ge \frac{1}{960}\,\adp(\Eb_M) 
\ge \frac{c_0}{960}.$$
This completes the proof of Theorem \ref{thm:quasi-optimal}.
\end{proof}

The proof of Theorem A relies on the following (local) version of Theorem \ref{thm:quasi-optimal}, which holds for `moderately' dilated copies of $\Eb_M$. 
For notational convenience, we shall denote the height of an axis-parallel rectangle $R$ by $r(R)$. 

\begin{proposition}
\label{prop:lemma of Sj-3}
Suppose 
$$\frac{1}{M}\le\delta\le\frac18.$$
Let $l(x)=ax+b$ be a line with slope $a\in[0,1)$. 
Let $\widetilde U\subset[x_j,x_{j-1}]\times [-1,1]$ be a dyadic square with 
$r(\widetilde U)=\frac1M$, whose center $c(\widetilde U)\in \convex(\delta.\Eb_M+l)$.
Then, for any $\delta_1\in(0,1)$, we have 
$$\mathcal M_{\delta_1}\big(\interior(\widetilde U)\cap(\delta.\Eb_{\twoM}+l);h\big)\ge c_1|\widetilde U|,$$ 
where $c_1=\frac{1}{960}\cdot\frac{1}{24^2}$. 
\end{proposition}

\begin{proof}
Applying Lemma \ref{lem:affine} and Lemma \ref{lem:dilation}, we have 
\begin{align}
&\mathcal M_{\delta_1}\big(\interior(\widetilde U)\cap(\delta.\Eb_{\twoM}+l);h\big) \notag\\
\ge &\frac13\cdot\mathcal M_{\delta_1}\big(\interior(\widetilde U-l)\cap \delta.\Eb_{\twoM};h\big)\notag\\
\ge &\frac13\,\delta\cdot \mathcal M_{\delta_1}\big(\interior(\delta^{-1}.(\widetilde U-l))\cap \Eb_{\twoM};h\big).\label{eq:mu(Sj)-1}
\end{align}
Let $\widetilde S$ be the square concentric with $\widetilde U-l$, of sidelength $r(\widetilde S)=\frac12 r(\widetilde U)$ 
(note that $\widetilde S$ is not necessarily dyadic). 
Then 
$\widetilde S\subset \interior(\widetilde U-l)$. Therefore,  
\eq{eq:mu(Sj)-1.5}{
\eqref{eq:mu(Sj)-1}\ge \frac13\,\delta\cdot \mathcal M_{\delta_1}\big((\delta^{-1}.\widetilde S)\cap \Eb_{\twoM};h\big). 
}
As in the proof of Theorem \ref{thm:quasi-optimal} above, 
write 
\eq{eq:mu(Sj)-2}{
\mathcal M_{\delta_1}\big((\delta^{-1}.\widetilde S)\cap \Eb_{\twoM};h\big)=\sum_i h(Q_i^{opt}),
}
where $\{Q_i^{opt}\}$ is an optimal $\delta_1$-cover of $(\delta^{-1}.\widetilde S)\cap \Eb_M$ with respect to $h$. 
Now classify $Q_i^{opt}$ into two cases: 
$$\begin{cases}
\text{Case 1: }\, 
r(Q_i^{opt})\ge r_j;\\
\text{Case 2: }\, 
r(Q_i^{opt})< r_j, 
\end{cases}$$
and, correspondingly, classify 
$Q^{adp}\in\{Q^{adp}_i:Q^{adp}_i\subset \delta^{-1}.\widetilde S\}$ into
$$\begin{cases}
\text{Case A: }\,
Q^{adp}\subset Q_i^{opt}, \text{ for some } Q_i^{opt}
\in\text{Case\,1;}\\ 
\text{Case B: }\,
Q^{adp}\supset Q_i^{opt}, \text{ for some } Q_i^{opt}\in\text{Case\,2.}
\end{cases}$$
Apply Proposition \ref{prop:optimum cover of EM}\,($ii$) and \eqref{eq:adp(F)} to $Q_i^{opt}\in\text{Case\,1}$. 
We have 
\eq{eq:mu(Sj)-3}{
\sum_{Q_i^{opt}\in\text{\,Case\,1}} h(Q_i^{opt})
\ge \frac{1}{96} \sum_{Q^{adp}\in\text{\,Case\,A}} h(Q^{adp}). 
}
On the other hand, 
\eq{eq:mu(Sj)-4}{
\sum_{Q_i^{opt}\in\text{\,Case\,2}} h(Q_i^{opt})
\ge \sum_{Q^{adp}\in\text{\,Case\,B}} \sum_{Q_i^{opt}\subset Q^{adp}} h(Q_i^{opt}). 
}
Thus, writing $\tilde\adp(Q^{adp})$ as in \eqref{eq:tilde-mu-def}, 
combining \eqref{eq:mu(Sj)-3} and \eqref{eq:mu(Sj)-4}, we have 
\eq{eq:mu(Sj)-5}{
\sum_i h(Q_i^{opt}) 
\ge \frac{1}{96}\sum_{Q^{adp}\subset \delta^{-1}.\widetilde S} \tilde\adp(Q^{adp}).
}
Let 
$\widetilde R$ 
be the rectangle concentric with $\delta^{-1}.\widetilde S$, with base $r(\widetilde S)$, and height
$$r(\widetilde R)=\delta^{-1} r(\widetilde S)-4r_j.$$ 
Then, writing $\tilde\adp(F)$ as in \eqref{eq:tilde-mu(F)-def}, 
\begin{align}
\sum_{Q^{adp}\subset \delta^{-1}.\widetilde S} \tilde\adp(Q^{adp})
&\ge\frac{1}{5} \sum_{Q^{adp}\subset \widetilde R} \tilde\adp(5Q^{adp}) \notag \\
&\ge\frac{1}{10} \sum_{Q^{adp}\subset \widetilde R}\adp(Q^{adp}), \label{eq:mu(Sj)-6} 
\end{align}
where we have used Proposition \ref{prop:optimum cover of EM}\,($i$) in the second inequality. 
To bound \eqref{eq:mu(Sj)-6} from below, 
denote by $(\tilde x, \tilde y)$ the center of $\widetilde R$. 
By the assumption that $c(\widetilde U)\in \convex(\delta.\Eb_M+l)$, 
we have 
$(\tilde x, \tilde y)\in \convex(\Eb_M).$ 
Therefore, in view of Remark \ref{rmk:Perron}\,($i$), we can find 
$(\tilde x, \hat y)\in \Eb_M$, such that 
$|\hat y-\tilde y|\le\frac{1}{2M}$. 
Let $I$ be the interval centered at $\hat y$, with 
$$|I|=\delta^{-1} r(\widetilde S)-8r_j-\frac{3}{2M}.$$
Then, arguing as in the proof of \eqref{eq:mu(5U)}, we have
\begin{align}
\sum_{Q^{adp}\subset \widetilde R}\adp(Q^{adp})
&=\#\{Q^{adp}: Q^{adp}\subset \widetilde R\}\cdot h(r_j)\notag\\
&\ge N_{\tilde x}(I)\,\frac{r(\widetilde S)}{r_j}\,h(r_j)\notag\\ 
&\ge \frac{1}{24}\cdot\frac{\delta^{-1}|\widetilde U|}{8}, 
\label{eq:mu(Sj)-7}
\end{align}
where we have used $|I|\ge \frac{1}{2}\delta^{-1} r(\widetilde S)$. 
Combining \eqref{eq:mu(Sj)-1.5}, \eqref{eq:mu(Sj)-2}, \eqref{eq:mu(Sj)-5}, \eqref{eq:mu(Sj)-6}, and \eqref{eq:mu(Sj)-7}, we obtain 
$$\mathcal M_{\delta_1}\big(\interior(\widetilde U)\cap(\delta.\Eb_{\twoM}+l);h\big) \ge \frac{1}{3}\cdot\frac{1}{960}\cdot\frac{1}{24}\cdot\frac{1}{8}\cdot |\widetilde U|.$$
This completes the proof of Proposition \ref{prop:lemma of Sj-3}.
\end{proof}


\section{Construction of $E_{\{M_n\}}$}
\label{sec:F}
As mentioned in the previous section (see \eqref{eq:13-34}), 
it suffices to consider sets $E$ which contain line segments of length at least $\frac14$ 
and with slope ranging from 0 to 1. 
More specifically, 
below we   call a compact set $E\subset\mathbb R^2$ a {Besicovitch set} if 
$$E\subset \Big[\frac12,\frac34\Big]\times\mathbb R,$$ 
$|E|=0$, and $E$ contains a line segment of the form 
$$y=ax+b(a),\quad\frac12 \le x\le \frac34$$ 
for every slope $a\in[0,1]$. 

\subsection{The construction} 
In this section we construct a family of Besicovitch\linebreak sets based on the Kakeya-type sets $\Eb_{\twoM}$ constructed in the previous section. 
The construction is based on the ideas in 
\cite{Besicovitch1928}, \cite{Sawyer1987}, \cite{Keich1999}, and \cite{Wolff1999}, 
but with some refinements. 

Given a nondecreasing sequence of positive integers $\{m_n:n\ge 1\}$, 
with $m_n\rightarrow\infty$. 
Writing $M_n = 2^{m_n}$, 
we shall use $\{\Eb_{M_n}:n\ge 1\}$ 
to inductively construct a sequence of Kakeya-type sets 
$\{\Fb_n:n\ge 1\}$ 
whose limit becomes a Besicovitch set. 
The Besicovitch set is determined by the sequence $\{M_n\}$, 
and will be denoted by $E_{\{M_n\}}$. 
The set of $E_{\{M_n\}}$, with $\{M_n\}$ as above, will be denoted by $\mathcal K$. 

To describe the construction, 
we need some notation. 
Recall from \eqref{eq:EE-def} that 
$\Eb_\n$ contains a family of line segments 
\begin{align}
\label{eq:EE-def}
\EE_\n:=\Big\{&\ell_{\alpha}(x)=\alpha x+\beta,\; 
0\le x\le 1:\notag\\ 
&\alpha=\sum_{k=1}^\n \frac{\varepsilon_k}{2^k}\;\; (\varepsilon_k=0, 1),\;\; 
\beta
=-\frac{1}{\n}\sum_{k=1}^\n \frac{(k-1)\varepsilon_k}{2^k}\Big\}.
\end{align}
For $P^j(\alpha)\subset\Pb^{(j)}_M$, 
define 
\begin{equation}
\label{eq:EM(a)-def}
\EE_M^{(\alpha)}
=\big\{\ell_{\alpha'}\in\EE_M: 
\ell_{\alpha'}\cap P^j(\alpha)\neq\varnothing\big\}.
\end{equation} 
By Proposition \ref{prop:Perron}, it is clear that 
$$\EE_M
=\bigcup_{\alpha:\, P^j(\alpha)\subset\Pb^{(j)}_M} \EE_M^{(\alpha)},\quad j=1,\cdots,M.$$
Note that, equivalently, we have (see \eqref{eq:epsilon_j}) 
\begin{equation}
\label{eq:Em(a)-range}
\EE_M^{(\alpha)}
=\{\ell_{\alpha'}\in\EE_M: 
\alpha-\epsilon_j
\le {\alpha'}< \alpha+\epsilon_j\}. \notag
\end{equation}
In particular, when $2\le j\le M$, we have 
\begin{equation}
\label{eq:EM(a)-pm}
\EE_M^{(\alpha)}=\bigcup_\pm\,\EE_M^{(\alpha\,\pm\, \epsilon_{j-1})}
\end{equation}
(note that $\epsilon_{j-1}={\epsilon_{j}}/{2}$).

\begin{definition}
\label{def:V(l)} 
Let $l(x)=ax+b\;(0\le x\le 1)$ be a line segment 
and let $\delta>0$. 
Define the \emph{triangular $\delta$-neighborhoods of} $l$ by 
\begin{align*}
\V^-_\delta(l)
&=\big\{(x,y):\;
l(x)+\delta(x - 1)\le y\le l(x),\;
0 \le x\le 1
\big\}, \\
\V^+_\delta(l)
&=\big\{(x,y):\;
l(x)\le y\le l(x)+\delta x,\; 
0 \le x\le 1
\big\}, 
\end{align*}
and\quad\quad\,\, 
$\V_\delta(l)=\V^-_\delta(l)\cup \V^+_\delta(l).
$
\end{definition}

\begin{figure}[ht]
\centering
\includegraphics[width=\linewidth]{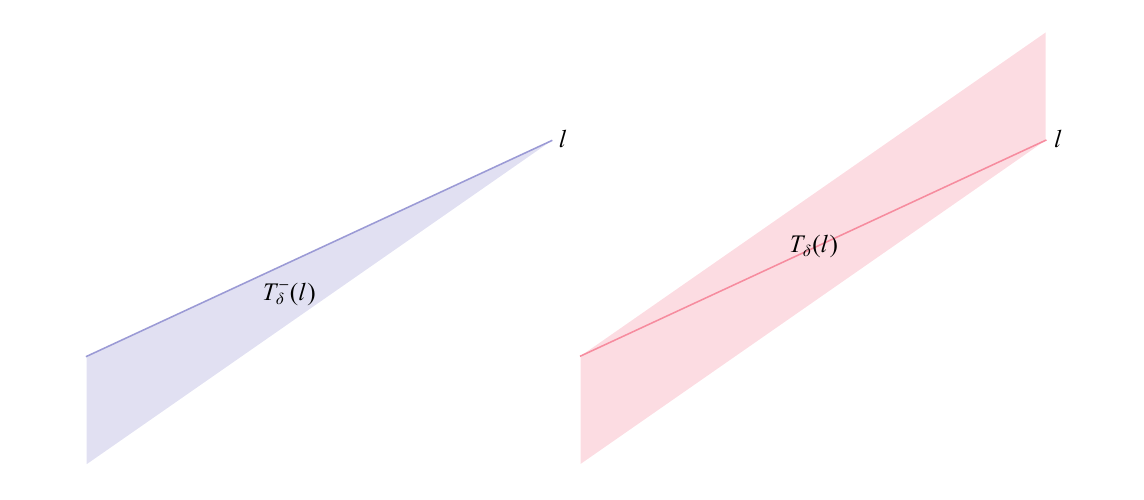}
\caption{Triangular $\delta$-neighborhoods of $l$}
\label{fig:neighborhoods}
\end{figure}
    
Note that, with $\delta=2^{-M}$, we have 
$T_i=\V^-_\delta(\ell_{\alpha_i})$ (see \eqref{eq:line-eqn}), and thus 
\eq{eq:Em=UT-}{\Eb_\n=\bigcup_{\lE\in \EE_\n} 
\V^-_{\delta}(\lE).
}
Similarly, with $\delta=2^{-M}$, we define 
\begin{equation}
\label{def:Em+}
\Eb^+_M=\bigcup_{\lE\in \EE_\n} 
\V^+_{\delta}(\lE), 
\end{equation}
and, in view of \eqref{eq:E=P-A}, define 
\begin{equation}
\label{def:Em0}
\mathring\Eb_M
=\bigcup_{\rk=1}^{\frac M2} 
\mathring\Eb^{(j)}_M
:=\bigcup_{\rk=1}^{\frac M2} 
\bigcup_{i=1}^{2^{\n-\rk}} 
\mathring P_i^{\rk} 
\end{equation}
where 
\begin{equation}
\label{eq:mathring-Pj-def}
\mathring P_i^{\rk}
:=P_i^{\rk}\cap \big(P_i^{\rk}-\delta(x-1)\big)
\end{equation}
(see \eqref{eq:E+l-def}). 
Note that $\mathring\Eb_M\not\subset\Eb_M$. 

\begin{definition}
\label{def:V(F)} 
\emph{($i$)} Let $\FF$ be a family of lines with finite slope. 
Define 
$$\V_\delta(\FF)=\bigcup_{l\in \FF}\, \V_{\delta}(l).$$ 
\emph{($ii$)} Let $\EE$ and $\FF$ be two families of lines with finite slope. 
Define 
$$\FF+\delta.\EE=\left\{l(x)+\delta\cdot\lE(x): l\in\FF,\, \lE\in\EE\right\}.$$
\end{definition}

We can now give the construction of $E_{\{M_n\}}$. 
Suppose the sequence 
$$\{M_n = 2^{m_n}: n\ge 1\}$$ 
is given. 
The Kakeya-type sets $\Fb_n$ 
take the form 
\begin{equation}
\label{def:Fn}
\Fb_n=\V_{\delta_n}(\FF_n),
\end{equation}
where $\FF_n$ is a family of lines defined inductively by 
\eq{eq:FFn-inductive}{
\FF_{n}=\FF_{n-1}+\delta_{n-1}. \EE_{M_{n}},\quad n=1,2,\cdots,
}
with $\FF_0=\{0\}$ and $\delta_n=(\text{\#}\,\FF_n)^{-1}$. 
Thus, $\delta_0=1$, $\FF_{1}=\EE_{M_1}$, 
$$\text{\#}\,\FF_n=2^{M_1}\cdots 2^{M{_{n}}},$$
and 
\eq{eq:delta_n-def}{
\delta_n=2^{-M_1}\cdots 2^{-M{_{n}}}.}
Note that the slopes contained in $\FF_n$ are given by 
$$j\delta_n=\frac{j}{\text{\#}\,\FF_n},\quad j=0, 1,\cdots, (\text{\#}\,\FF_n)-1.$$
Note also that, since $0\in\EE_M$, we have 
\begin{equation}\label{eq:FF-chain}
\FF_{0}\subset\FF_{1}\subset\cdots\subset\FF_{n}\subset\cdots.
\end{equation} 
Now define $\Fb_{n}$ by \eqref{def:Fn}, and set 
\begin{equation}
\label{eq:E=nF}
E_{\{M_n\}}=\bigcap_{n=1}^\infty \Fb_{n}
\end{equation}
(which is clearly a compact set). 
This completes the construction of $E_{\{M_n\}}$.  

\begin{figure}[ht]
  \centering
  \subfigure[$\FF_{3}^{(0)}=\frac{1}{16}.\EE_{4}$ (red)]
      {\includegraphics[width=0.49\textwidth]{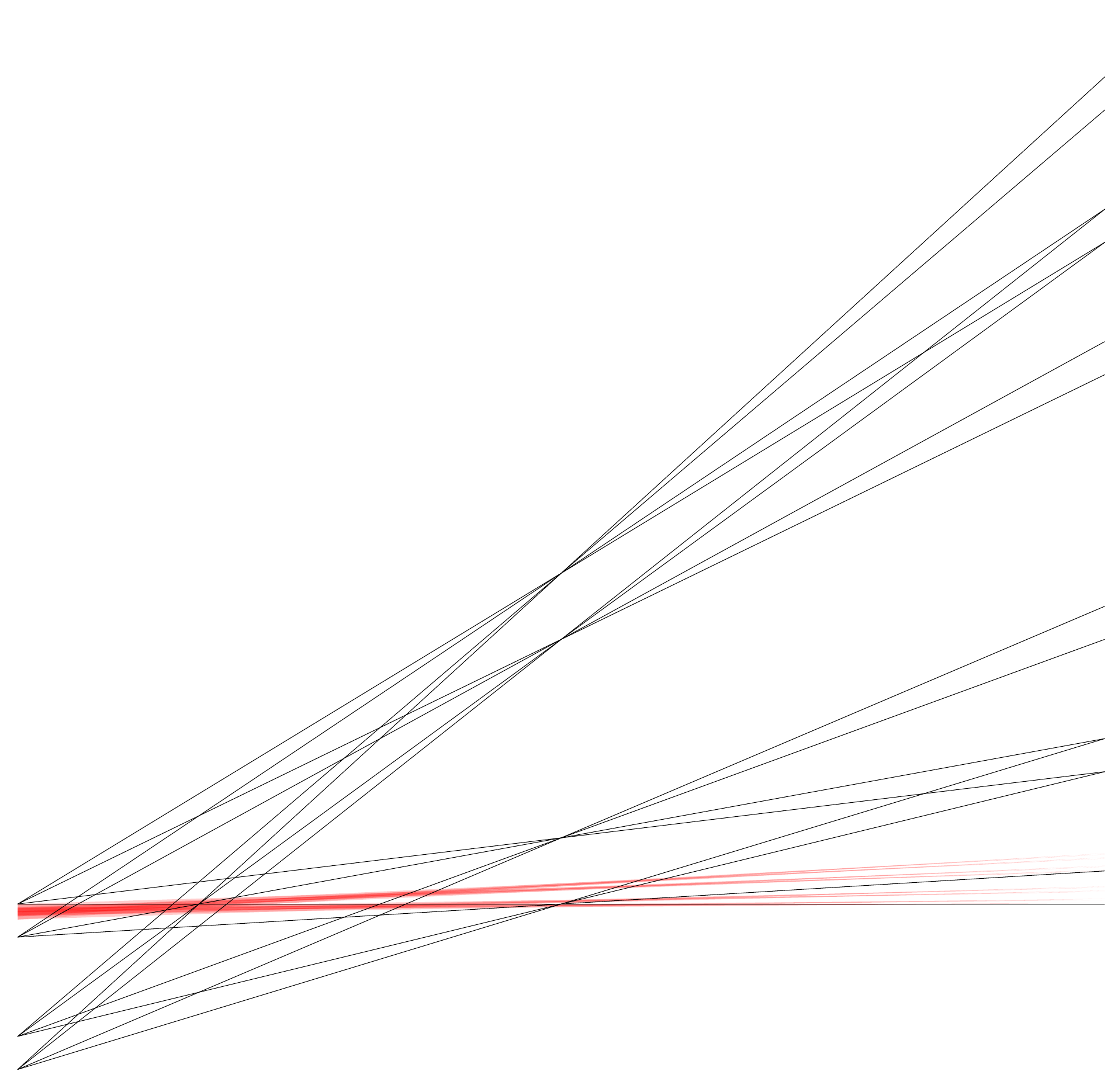}}
  \subfigure[$\FF_3=\displaystyle\bigcup_{j=0}^{15}\FF_{3}^{(j/16)}$]
      {\includegraphics[width=0.49\textwidth]{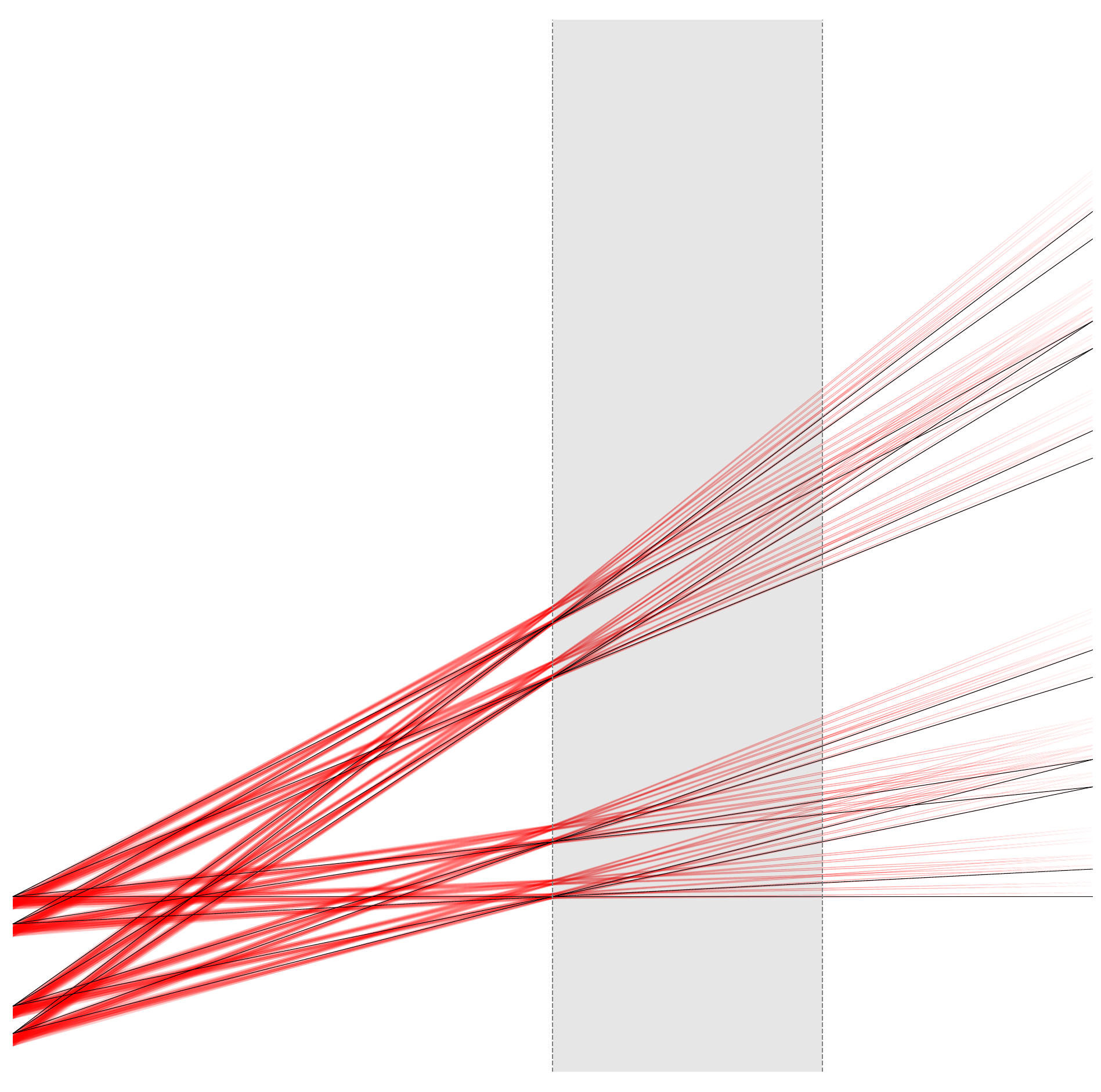}}
  \caption{$\FF_2$ (black) and $\FF_3$ (red), when $M_1=M_2=2,\, M_3=4$}
  \label{fig:F}
\end{figure}  

Note that $E_{\{M_n\}}$ is initially defined between $x=0$ and $x=1$. 
However, we shall limit its definition to $[\frac12,\frac34]\times\mathbb R$ for simplicity (see Figure \ref{fig:F}\,(b)), 
as we did with $\Eb_M$ in Section \ref{sec:E} (see \eqref{eq:13-34}). 

We make some remarks 
about the construction 
and introduce more notation. Suppose $l_{a}:=ax+b\in\FF_n$ is obtained through
$$l_{a}(x)=\sum_{j=1}^{n} \delta_{j-1}\cdot \lE_{\alpha_j}(x),$$
where $\lE_{\alpha_j}=\alpha_j x+\beta_j \in\EE_{M_j}$. 
Then 
\begin{equation}
\label{eq:b(a)-dyadic}
a=\sum_{j=1}^{n} \delta_{j-1}\alpha_j,\quad
b=\sum_{j=1}^{n} \delta_{j-1}\beta_j.
\end{equation}
In view of \eqref{eq:FF-chain}, 
this defines a function $b=b(a)$ for all dyadic rationals $a\in [0,1)$. 
When $a=1$, or when $a\in [0,1)$ is \textit{not} a dyadic rational, 
we write 
$$a=\sum_{j=1}^{\infty} \delta_{j-1}\alpha_j\quad\text{(uniquely)}$$
and define 
\begin{equation}
\label{eq:b(a)}
b(a)=\sum_{j=1}^{\infty} \delta_{j-1}\beta_j.
\end{equation}
Define also $\tilde b(0)=0$ and 
\begin{equation}
\label{eq:tilde-b(a)}
\tilde b(a)=\lim_{\tilde a\rightarrow a^-}b(\tilde a),\quad a\in(0,1].
\end{equation}
It is easy to verify that $\tilde b(a)$ 
is well-defined and 
satisfies 
$\tilde b(a)=b(a)$ 
when $a$ is {not} a dyadic rational in $(0,1)$.\footnote{\,Theorem \ref{thm:Th2}\,($iii$) below shows that $\tilde b(a)\neq b(a)$, 
when $a$ is a dyadic rational in $(0,1)$.} 
Thus, we obtain a family of lines   
\begin{align}
\label{eq:lines-la}
    \FF_{\infty}:=&\big\{l_a:=ax+b(a):a\in[0,1]\big\}\notag\\
&\cup\big\{\tilde l_a:=ax+\tilde b(a):a\in[0,1]\big\}.
\end{align}

For $l_{a}\in \FF_{n-1}$, we shall write (cf. Figure \ref{fig:F}\,(a))
$$\FF_{n}^{(a)}=\{l_{a}\} + \delta_{n-1}. \EE_{M_n}\quad\text{and}\quad \Fb_{n}^{(a)}=\V_{\delta_{n}}(\FF_{n}^{(a)}).$$
By definition, we have
\begin{equation}
\label{eq:Fn=Fn(a)}
\FF_{n}=\bigcup_{l_{a}\in \FF_{n-1}} \FF_{n}^{(a)},\quad 
\Fb_{n}=\bigcup_{l_{a}\in \FF_{n-1}} \Fb_{n}^{(a)}. 
\end{equation}
Note that, by Definition \ref{def:V(l)} and affine transformations, 
\begin{equation}
\label{eq:Fn(a)=EM}
\Fb_{n}^{(a)}=\delta_{n-1}.(\Eb_{M_n}\cup\Eb_{M_n}^+)+l_a.
\end{equation} 
We shall also consider (see \eqref{def:Em0})  
\begin{align}
\label{eq:Fn-interior}
\mathring\Fb_{n}
:=\bigcup_{l_{a}\in \FF_{n-1}}\mathring\Fb_{n}^{(a)},\quad\text{where }\;\mathring\Fb_{n}^{(a)}
:=\delta_{n-1}.\mathring\Eb_{M_n}+l_a, 
\end{align}
and 
\begin{align}
\label{eq:Fn-bdry}
\partial\Fb_{n}
:=\bigcup_{l_{a}\in\FF_{n-1}} 
\partial\Fb_{n}^{(a)}, \quad\text{where }\;
\partial\Fb_{n}^{(a)}
:=\delta_{n-1}.\bdry(\Eb_{M_n})+l_a.  
\end{align} 
Note that $\mathring \Fb_n\subset \Fb_n$,  since $\mathring\Eb_M \subset \Eb_M\cup\Eb_M^+$. 

Let $\FF$ be a family of lines with finite slope. 
When the context is clear,  
we shall often use the same notation to denote the line set 
$$\bigcup_{l\in\FF}\,\big\{(x,l(x)): 0\le x\le 1\big\}
\subset\mathbb R^2.$$ 
With this in mind, 
we define 
\begin{equation}
\label{def:Em-overline} 
\overline \Eb_M 
= \convex(\EE_M). 
\end{equation} 
Note that for 
$P_i^{\rk}=P^j(\alpha)\subset\Pb_M^{(j)}$, 
we have (see \eqref{eq:mathring-Pj-def}) 
\eq{eq:mathring-Pji-cov}{\mathring P_i^{\rk}=cov\big(\EE_M^{(\alpha)}\big),\quad 
x_j\le x\le x_{j-1},} 
or, more succinctly, 
\eq{eq:mathring-Pji-lines}{
\mathring P_i^{\rk}=cov\big(\{\ell_{\alpha-\delta},\ell_{\alpha}\}\big),} 
where $\delta=2^{-M}$ and 
$\ell_{\alpha-\delta}$ (resp. $\ell_{\alpha}$) is the line in $\EE_M^{(\alpha)}$ with slope ${\alpha-\delta}$ (resp. ${\alpha}$). 
In particular, by 
\eqref{def:Em0},  
this implies 
$\mathring\Eb_M\subset\overline \Eb_M $. 
Define also 
\begin{equation}
\label{eq:Fn-cov}
\overline\Fb_{n}=\bigcup_{l_{a}\in \FF_{n-1}}
\overline\Fb_n^{(a)},\quad\text{with }\;
\overline\Fb_n^{(a)}
:=\delta_{n-1}.\overline\Eb_{M_n}+l_a.
\end{equation} 
Note that 
\begin{equation} \label{eq:Fn(a)-cov} \overline\Fb_n^{(a)} =\convex(\FF_{n}^{(a)}). \end{equation}
By \eqref{eq:Fn-interior} and \eqref{eq:Fn=Fn(a)}, we have 
$$\mathring\Fb_{n}\subset\overline\Fb_{n}\subset\Fb_{n-1},\quad n\ge  1.$$

\subsection{Properties of $\Fb_n$ and $E_{\{M_n\}}$}
The definition of $E_{\{M_n\}}$ is clarified in the following proposition. 

\begin{proposition}
\label{proposition of Fn}
Let $\Fb_n$ and $E_{\{M_n\}}$ be as above. Then the following hold. 

\noindent\emph{(}i\emph{)} $\Fb_1\supset\Fb_2\supset\cdots\supset\Fb_n\supset\cdots$. 

\noindent\emph{(}ii\emph{)} 
$E_{\{M_n\}}=\FF_{\infty}$. In particular, $E_{\{M_n\}}$ is a line set. 

\noindent\emph{(}iii\emph{)} 
If $\{M_n\}$ is strictly increasing, then 
$$\mathcal N_{\delta_n}(E_{\{M_n\}})\cdot\delta_n^2 \log\frac{1}{\delta_n} \leq \frac{1}{2}+o(1),\quad \text{as } n\rightarrow\infty.$$
\end{proposition}

\begin{proof} 

\noindent($i$)
Write $\delta=2^{-M}$. 
By \eqref{eq:EE-def}, 
it is easy to verify by convexity 
that every $\ell\in\EE_\n$ satisfies
$$ x-1
\le \ell(x) + \delta(x-1) $$
and 
$$ \ell(x) + \delta x\le x $$
when $0\le x\le 1$.  
By definition, this implies 
\eq{eq:(i)-embed}{
\V_\delta(\EE_M)\subset \V_1(y=0).
}
After affine transformations, 
it follows that for each $l_{a}\in \FF_{n-1}$, we have 
\begin{equation}
\label{eq:Fn-in-V}
\V_{\delta_{n}}(\FF_{n}^{(a)}) \subset \V_{\delta_{n-1}} (l_{a}).
\end{equation}
Therefore, by \eqref{eq:Fn=Fn(a)} and \eqref{def:Fn}, we have 
$$\Fb_{n} 
= \displaystyle\bigcup_{l_{a}\in \FF_{n-1}} \Fb_{n}^{(a)} 
\subset \displaystyle\bigcup_{l_{a}\in \FF_{n-1}} \V_{\delta_{n-1}} (l_{a}) 
=\Fb_{n-1}.$$
This proves ($i$). 

\noindent($ii$) 
Since $\FF_n \subset \FF_{n+1}$ and $\FF_{n+k} \subset \Fb_{n+k}\, (k=1,2,\cdots$), 
by \eqref{eq:E=nF} 
we have $\FF_n\subset E_{\{M_n\}},\, n=1,2,\cdots$. 
Thus, writing 
$$\FF_{dya}=\bigcup_{n=1}^{\infty} \FF_n,$$ 
we have ${\FF_{dya}}\subset E_{\{M_n\}}$. 
By the compactness of $E_{\{M_n\}}$, 
this implies $\closure(\FF_{dya})\subset E_{\{M_n\}}$. 
On the other hand, 
it is easy to see from the definition of $\FF_{\infty}$ in \eqref{eq:lines-la} that 
$\FF_{\infty}\subset \closure(\FF_{dya})$. 
Therefore 
$$\FF_{\infty}\subset 
\closure(\FF_{dya})\subset E_{\{M_n\}}.$$

Conversely, suppose $x \in E_{\{M_n\}}$. Noting that $\V_\delta(\FF)\subset \FF(\delta)$, we have 
$$x \in \Fb_n \subset \FF_n(\delta_n)\subset
\FF_{dya}(\delta_n)
,\quad n=1,2,\cdots.$$ 
Take $n\rightarrow\infty$. This implies $x \in \closure(\FF_{dya}).$ 
Thus $E_{\{M_n\}} \subset \closure(\FF_{dya})$. It remains to show $\closure(\FF_{dya})\subset\FF_{\infty}$. 
By \eqref{eq:b(a)-dyadic}, we have $\FF_{dya} \subset \FF_{\infty}$. 
On the other hand, by \eqref{eq:b(a)}, \eqref{eq:tilde-b(a)}, and a standard compactness argument (cf. \cite[Section 3.1(3)]{Hutchinson1981}), 
$\FF_{\infty}$ forms a closed set. Therefore 
$\closure(\FF_{dya}) \subset \FF_{\infty}$. This proves ($ii$). 

\noindent($iii$)
The proof is a refinement of \cite[Lemma~1.1]{Keich1999}. 
Fix $l_{a}\in \FF_{n-1}$. 
In view of \eqref{eq:Fn(a)=EM} 
and the decomposition \eqref{eq:E=P-A}, 
each (extended) parallelogram in the $j$-th band of $\Fb_{n}^{(a)}$ has vertical width less than 
$$\delta_{n-1}\frac{1}{M_n\, 2^{M_n-j}}+\delta_n,$$
therefore can be covered by no more than 
$$\frac{\,\frac{1}{M_n}\,}{
\delta_n}\times\left(\frac{\delta_{n-1}\frac{1}{M_n\, 2^{M_n-j}}+\delta_n}{\delta_n}+3\right)
\,=\,\frac{1}{\delta_n^2\,M_n}\Big(\frac{\delta_{n-1}}{M_n\, 2^{M_n-j}}+4\delta_n\Big)$$
many $\delta_n\times\delta_n$ squares. 
Since there are $2^{M_n-j}$ many such parallelograms ($\frac{M_n}{4}<j\le \frac{M_n}{2}$), it follows that 
\begin{align*}
\mathcal N_{\delta_n} (\Fb_{n}^{(a)}) 
&\leq \displaystyle\sum_{j=\frac{M_n} {4}+1}^{\frac{M_n}{2}} 
\frac{1}{\delta_n^2\,M_n}\Big(\frac{\delta_{n-1}}{M_n}+4\delta_n 2^{M_n-j}\Big)\\
&\le \frac{1}{\delta_n^2\,M_n}\Big(\frac{\delta_{n-1}}{4}+4\delta_n 2^{3M_n/4}\Big).
\end{align*}
Thus, by \eqref{eq:E=nF} and \eqref{eq:Fn=Fn(a)},  
we obtain 
\begin{align*}
\mathcal N_{\delta_n}(E_{\{M_n\}}) 
&\leq \frac{1}{\delta_{n-1}}\cdot\frac{1}{\delta_n^2\,M_n}\Big(\frac{\delta_{n-1}}{4}+4\delta_n 2^{3M_n/4}\Big)\\
&\le \frac{1}{\delta_n^2\,M_n}\Big(\frac{1}{4}+4\cdot2^{-M_n/4}\Big), 
\end{align*}
or equivalently, 
\begin{equation}
\label{eq:N(E)}
\mathcal N_{\delta_n}(E_{\{M_n\}})\cdot\delta_n^2\,M_n
\le \frac{1}{4}+4\cdot2^{-M_n/4}.
\end{equation}
On the other hand, 
since $\{M_n\}$ is strictly increasing, we have 
\eq{eq:sum-Mn}{
M_1+M_2+\cdots+M_n\le 2M_n,
}
and so  
$\log\frac{1}{\delta_n}\le 2 M_n.$ 
Combining this with \eqref{eq:N(E)}, 
we obtain
$$\mathcal N_{\delta_n}(E_{\{M_n\}}) \cdot\delta_n^2 \log\frac{1}{\delta_n}
\leq \frac{1}{2}+8\cdot2^{-M_n/4}.$$ 
This proves ($iii$), and the proof of Proposition \ref{proposition of Fn} is complete.
\end{proof}

\begin{remark}
\label{rmk:N(bdry-Fn)}
The proof of Proposition \ref{proposition of Fn}\,(iii) 
above also gives 
\eq{eq:N(partialFn)}{
\mathcal N_{\delta_n}(\partial \Fb_n)\cdot\delta_n^2\,M_n\le 7\cdot 2^{-{M_n}/{4}},\quad n\ge 1.
}
\end{remark}

Further properties of $\Fb_n$ are summarized in the following lemma. They will be used in the proof of Theorem \ref{thm:Th1} in the next section. 
As before, we shall limit the definitions of $\Fb_n$, $\mathring \Fb_n$, and $\overline \Fb_n$ to $[\frac12,\frac34]\times\mathbb R$. 

{
\begin{lemma}
\label{lem:Fn-properties} 
For $n\ge 1$, the following hold.\\ \noindent\emph{(}i\emph{) 
(}density of $\Fb_{n+1}$ in $\Fb_{n}$\emph{)}
\eq{eq:(i)}{
\mathring\Fb_{n}\subset\overline\Fb_{n+1}.
}
\noindent\emph{(}ii\emph{) 
(}porosity of $\Fb_{n+1}$ in $\Fb_{n}$\emph{)}
$$\mathring\Fb_{n}\subset\Fb_{n+1}\Big(\frac{\delta_n}{2 M_{n+1}}\Big).$$ 
\noindent\emph{(}iii\emph{) 
(}thickness of boundary\emph{)} 
$$\Fb_n\backslash\mathring\Fb_n 
\subset 
(\partial\Fb_n)(\delta_{n}).$$
\end{lemma}
}

\begin{proof}
\noindent($i$) 
The proof is more delicate when 
$\frac12\le x\le \frac12+\frac{1}{M_n}$, 
and less so when $x\ge \frac12+\frac{1}{M_n}$. 
Below we give a unified proof. 

By \eqref{eq:Fn-interior}, we have 
\eq{eq:(i)-1}{
\mathring\Fb_{n}
=\bigcup_{l_{a}\in \FF_{n-1}} 
\big(\delta_{n-1}.\mathring\Eb_{M_n}+l_a\big). 
}
On the other hand, by \eqref{eq:Fn-cov}, 
$$
\overline\Fb_{n+1}=\bigcup_{l_{a'}\in \FF_{n}}
\big(\delta_{n}.\overline\Eb_{M_{n+1}}+l_{a'}\big). 
$$
Using \eqref{eq:FFn-inductive}, 
we can further write 
\eq{eq:(i)-2}{
\overline\Fb_{n+1}
=\bigcup_{l_{a}\in \FF_{n-1}} 
\bigcup_{\ell_{\alpha'}\in \EE_{M_{n}}} 
\left(\delta_{n}.\overline\Eb_{M_{n+1}}+\delta_{n-1}\cdot\ell_{\alpha'}+l_{a}\right). 
} 
Comparing \eqref{eq:(i)-1} and \eqref{eq:(i)-2}, 
it is clear that it suffices to show 
\eq{eq:(i)-3}{
\mathring\Eb_{M_n}\subset 
\bigcup_{\ell_{\alpha'}\in \EE_{M_{n}}} 
\big(2^{-M_{n}}.\overline\Eb_{M_{n+1}}+\ell_{\alpha'}\big). 
}
Note that this is essentially \eqref{eq:(i)} with $n=1$, since $\FF_1=\EE_{M_1}$, $\delta_1=2^{-M_1}$. 
Without loss of generality, 
we prove \eqref{eq:(i)-3} for $n=1$, 
assuming  
$$2\le M_1\le M_2,\quad 
\frac12\le x\le\frac34.$$ 

The case $M_1=2$ is relatively simple and can be verified directly. 
So we assume $M_1\ge 4$. 
For simplicity, write $\delta_2=2^{-M_2}$.  
By definition (see \eqref{def:Em0}), 
$$\mathring\Eb_{M_1} 
=\bigcup_{\rk=1}^{\frac {M_1}{2}} 
\bigcup_{i=1}^{2^{M_1-j}} 
\mathring P_i^j,$$
where $P_i^j$ ($i=1,\cdots,2^{M_1-j}$) 
are the parallelograms in $\Pb_{M_1}^{(j)}$. 
Thus, writing $P_i^j=P^j(\alpha)$,  
it suffices to show 
\eq{eq:(i)-4}{
\mathring P^j(\alpha)\subset
\bigcup_{\ell_{\alpha'}\in \EE_{M_{1}}^{(\alpha)}} 
\big(\delta_1.\overline\Eb_{M_{2}}+\ell_{\alpha'}\big),\quad 
\frac{M_1}{4}<j\le \frac{M_1}{2}. 
}
We show \eqref{eq:(i)-4} by cross-sections. 
Fix $$x\in [x_j,x_{j-1}]:=\Big[1-\frac{j}{M_1},1-\frac{j-1}{M_1}\Big].$$ 
Denote 
$$\EE_{M_{1}}^{(\alpha)}(x)=\big\{\ell_{\alpha'}(x): \ell_{\alpha'}\in\EE_{M_{1}}^{(\alpha)}\big\}\subset\mathbb R.$$ 
By \eqref{eq:mathring-Pji-cov}, we have 
$$\mathring P^{\rk}(\alpha)\big|_x=co\big(\EE_{M_{1}}^{(\alpha)}(x)\big).$$ 
On the other hand, 
\eq{eq:(i)-5}{
\bigcup_{\ell_{\alpha'}\in \EE_{M_{1}}^{(\alpha)}} 
\left(\delta_1.\overline\Eb_{M_{2}}+\ell_{\alpha'}\right)\big|_x
=\bigcup_{\ell_{\alpha'}\in \EE_{M_{1}}^{(\alpha)}} 
\Big(\delta_1\cdot\overline\Eb_{M_{2}}\big|_x+\ell_{\alpha'}(x)\Big) 
}
is a union of intervals shifted from $\delta_1\cdot\overline\Eb_{M_{2}}\big|_x$ by $\ell_{\alpha'}(x)$. 
Notice that $0\in\overline\Eb_{M_{2}}|_x$ 
as $0\in\EE_{M_{2}}$. 
Thus 
$\EE_{M_{1}}^{(\alpha)}(x)$ is contained in the union \eqref{eq:(i)-5}.  To show that the same holds true for $co\big(\EE_{M_{1}}^{(\alpha)}(x)\big)$, it reduces to showing that the union \eqref{eq:(i)-5} is connected, 
which by Definition \ref{def:epsilon-dense} is equivalent to: 
\begin{equation}
\label{eq:(i)-dense}
\EE_{M_{1}}^{(\alpha)}(x)\,\text{ is }\,\delta_1 \big|\overline\Eb_{M_{2}}|_x\big|\text{-dense}.
\end{equation}

To show \eqref{eq:(i)-dense}, we shall replace $\overline\Eb_{M_{2}}$ by its subset $\convex(\{0,\ell_{1-\delta_2}\})$ 
(where $\ell_{1-\delta_2}$ is the line segment in $\EE_{M_{2}}$ with slope $1-\delta_2$),  
and show that, in fact,  
\begin{equation}
\label{eq:(i)-dense-2}
\EE_{M_{1}}^{(\alpha)}(x)\,\text{ is }\,\delta_1 \ell_{1-\delta_2}(x)\text{-dense}.
\end{equation}
Note that, by \eqref{eq:line-eqn}, 
\eq{eq:(i)-l2-eqn}{
\ell_{1-\delta_2}(x)=(1-\delta_2)x-\frac{1}{M_2}+\frac{\delta_2}{M_2}+\delta_2.
}
So, indeed, 
$$\ell_{1-\delta_2}(x)\ge \ell_{1-\delta_2}(1/2)>0,
\quad \frac12\le x\le 1. $$
Note also that, 
by \eqref{eq:(i)-l2-eqn}, 
\eq{eq:(i)-6}{
0\le x-\ell_{1-\delta_2}(x)<\frac{1}{M_2},\quad 0\le x\le 1.
}
Therefore, in view of \eqref{eq:b>-1/M} and \eqref{eq:(i)-embed}, 
$[0,\ell_{1-\delta_2}(x)]$ is shrunk from $\overline\Eb_{M_{2}}|_x$ by less than $1/M_2$ on both sides. 

Now by \eqref{eq:EM(a)-pm}, we can write 
\eq{eq:(i)-branching}{
\EE_{M_1}^{(\alpha)}(x)
=\bigcup_\pm\,\EE_{M_1}^{(\alpha\pm\epsilon_{j-1})}(x),\quad \text{with}\quad\epsilon_{j-1}=\frac{1}{2^{M_1-j+2}}.
}
By Proposition \ref{prop:Perron}, $\EE_{M_1}^{(\alpha\pm\epsilon_{j-1})}(x)$ 
consists of the upper endpoints of the intervals (see \eqref{eq:ExPxTx} and \eqref{eq:Em=UT-})
$$\big\{T^x_i: T_i\cap P^{j-1}(\alpha\pm\epsilon_{j-1})\neq\varnothing\big\},$$
where $T_i$ are the triangles in $ \Eb_{M_1}$. 
By Remark \ref{rmk:eta-connected}, 
these intervals form a $\frac{\delta_1}{M_1}$-connected family. 
Thus, by Definition \ref{def:eta-connected}, 
\begin{equation}
\label{eq:(i)-dense2}
\EE_{M_1}^{(\alpha\pm\epsilon_{j-1})}(x)
\,\text{ is }\,
\Big(\delta_1(1-x)-\frac{\delta_1}{M_1}\Big)\text{-dense}
\end{equation}
(note that $|T^x_i|=\delta_1(1-x)$). 
On the other hand, by \eqref{eq:(i)-6}, 
\begin{align*}
\delta_1 \ell_{1-\delta_2}(x)
&>\delta_1 \Big(x-\frac{1}{M_2}\Big)\\
&\ge \delta_1 (1-x)-\frac{\delta_1}{M_1},\quad\quad
\frac12\le x\le 1.
\end{align*}
Combining this with \eqref{eq:(i)-dense2}, 
we see that both $\EE_{M_1}^{(\alpha-\epsilon_{j-1})}(x)$ 
and $\EE_{M_1}^{(\alpha+\epsilon_{j-1})}(x)$ are $\delta_1 \ell_{1-\delta_2}(x)$-dense. 

To obtain the same conclusion for $\EE_{M_1}^{(\alpha)}(x)$, 
we use \eqref{eq:(i)-branching} and Lemma \ref{lem:dense-union}. 
More specifically, 
consider the lines $$\ell_{\alpha-\delta_1},\, 
\ell_{\alpha-\epsilon_{j-1}} 
\in \EE_{M_1}^{(\alpha-\epsilon_{j-1})},$$
and 
$$\ell_{\alpha+\epsilon_{j-1}-\delta_1},\, 
\ell_{\alpha}
\in \EE_{M_1}^{(\alpha+\epsilon_{j-1})}.$$
By \eqref{eq:mathring-Pji-lines}, we have $$\ell_{\alpha-\delta_1}(x)
\le \ell_{\alpha-\epsilon_{j-1}}(x),\, 
\ell_{\alpha+\epsilon_{j-1}-\delta_1}(x)
\le \ell_{\alpha}(x), 
\quad x_j\le x\le x_{j-1}.$$ 
Thus, by 
Lemma \ref{lem:dense-union}, 
it suffices to show that the middle lines satisfy 
\eq{eq:(i)-7}{
\ell_{\alpha+\epsilon_{j-1}-\delta_1}(x)
-\ell_{\alpha-\epsilon_{j-1}}(x)
\le \delta_1 \ell_{1-\delta_2}(x),\quad x_j\le x\le x_{j-1}. 
}
Notice that 
the slopes of the lines on the two sides of \eqref{eq:(i)-7} 
satisfy 
$$(\alpha+\epsilon_{j-1}-\delta_1)
-(\alpha-\epsilon_{j-1})>\delta_1(1-\delta_2).$$ 
So it suffices to show that \eqref{eq:(i)-7}  holds at $x=x_{j-1}$. 
Indeed, the lines 
$\ell_{\alpha+\epsilon_{j-1}-\delta_1}$ and 
$\ell_{\alpha-\epsilon_{j-1}}$ 
can be used to find
the vertical base of the triangle $A^j_i\subset\Pb_{M_1}$ at $x=x_{j-1}$ (see Figure \ref{fig:E}). 
Together with Proposition \ref{prop:EM-observe}\,($iv$), 
this yields  
\eq{eq:(i)-8}{
\ell_{\alpha+\epsilon_{j-1}-\delta_1}(x_{j-1})
-\ell_{\alpha-\epsilon_{j-1}}(x_{j-1}) 
= \frac{\delta_1}{M_1} + \delta_1(1-x_{j-1}). 
}
On the other hand, using $2\le M_1\le M_2$, 
it is easy to verify that 
\eq{eq:(i)-9}{
\frac{1}{M_1}+(1-x)<\ell_{1-\delta_2}(x), 
\quad \frac{1}{2}+\frac{1}{M_1}\le x\le 1. 
}
In particular, 
taking $x=x_{j-1}$, \eqref{eq:(i)-9} gives 
\eq{eq:(i)-10}{
\frac{\delta_1}{M_1}+\delta_1(1-x_{j-1})
<\delta_1 \ell_{1-\delta_2}(x_{j-1}). 
}
Combining \eqref{eq:(i)-8} and \eqref{eq:(i)-10} now shows that \eqref{eq:(i)-7} holds at $x=x_{j-1}$. 
This completes the proof of ($i$). 

\noindent($ii$) 
By ($i$), it suffices to show 
$$\overline\Fb_{n+1}\subset \Fb_{n+1}\Big(\frac{\delta_n}{2 M_{n+1}}\Big).$$
By Remark \ref{rmk:Perron}\,($i$), 
the vertical gaps in $\delta_{n}. \Eb_{M_{n+1}}$ 
are of size less than $\frac{\delta_n}{M_{n+1}}$. 
Therefore, by \eqref{eq:Fn(a)=EM}, 
the same holds true for $\Fb_{n+1}^{(a)}$, for each $l_{a}\in \FF_{n}$. 
Since $\FF_{n+1}^{(a)}\subset \Fb_{n+1}^{(a)}$, 
it follows that 
$$
cov\big(\FF_{n+1}^{(a)}\big)\subset \Fb_{n+1}^{(a)}\Big(\frac{\delta_n}{2 M_{n+1}}\Big).$$ 
Taking union over $l_a$, by \eqref{eq:Fn(a)-cov}, 
\eqref{eq:Fn-cov}, and \eqref{eq:Fn=Fn(a)}, we obtain  
$$\overline\Fb_{n+1}
=\bigcup_{l_{a}\in \FF_{n}} \overline\Fb_{n+1}^{(a)}
\subset \bigcup_{l_{a}\in \FF_{n}} \Fb_{n+1}^{(a)}\Big(\frac{\delta_n}{2 M_{n+1}}\Big)
=\Fb_{n+1}\Big(\frac{\delta_n}{2 M_{n+1}}\Big).$$
This proves ($ii$). 

\noindent($iii$)
Write $\delta=2^{-M}$. 
Using the decomposition \eqref{eq:E=P-A}, 
it is easy to verify that
\begin{equation*}
\label{eq:sym-difference}
(\Eb_M\cup \Eb^+_M)
\backslash \mathring\Eb_M 
\subset (\bdry(\Eb_M))(\delta), \quad \frac12 \le x\le \frac34.
\end{equation*}
By \eqref{eq:Fn(a)=EM}, \eqref{eq:Fn-interior}, 
and \eqref{eq:Fn-bdry}, 
this implies 
$$\Fb_n^{(a)}\backslash\mathring\Fb_n^{(a)} 
\subset \big(\partial\Fb_n^{(a)}\big)(\delta_{n}),\quad l_{a}\in \FF_{n-1}.$$  
Taking union over $l_{a}$, 
by \eqref{eq:Fn=Fn(a)}, \eqref{eq:Fn-interior}, 
and \eqref{eq:Fn-bdry}, we obtain 
$$\Fb_n\backslash\mathring\Fb_n 
\subset (\partial\Fb_n)(\delta_{n}).$$
This proves ($iii$), and the proof of Lemma \ref{lem:Fn-properties} is complete. 
\end{proof}


\section{Generalized Hausdorff dimension of $E_{\{M_n\}}$}
\label{sec:Th1}

In this section we prove Theorem A. It is an immediate consequence of the following slightly more general result, upon taking 
$\phi(r)=\log\log\log(1/r)$ (for example). 
Note that it is implied by Proposition \ref{proposition of Fn}\,($iii$) that $E_{\{M_n\}}$ has 
finite Hausdorff measure with respect to 
$h(r)=r^2\,{\log}(1/r)$. 

\begin{theorem}
\label{thm:Th1}
Let $\phi: (0,1]\rightarrow (0,\infty)$ be a decreasing, continuous function 
which satisfies 
$\lim_{r\rightarrow0}\phi(r)=\infty$ 
and 
\eq{eq:phi-cond}{
\phi(r)\le \log(1/r),\quad 0<r\le 1. 
}
Then $E_{\{M_n\}}$ has positive 
Hausdorff measure with respect to the gauge 
$$h_1(r):=r^2\, {\log}(1/r)\,\phi(r),$$
provided that 
\begin{equation}
\label{eq:M-condition-2}
\phi\big(\frac{1}{M_{n+1}}\big) \ge \frac{1}{c_1} \log\big(\frac{1}{\delta_n}\big)\,\phi(\delta_n) 
\end{equation}
holds for sufficiently large $n$ 
(where $c_1$ is as in Proposition \ref{prop:lemma of Sj-3}). 
\end{theorem} 

Since the Hausdorff measure is monotonic with respect to the gauge, 
condition \eqref{eq:phi-cond} is not essential and is only for regularizing the growth of $\phi(r)$ as $r\rightarrow 0$. 
Note however that, 
since $\delta_n\approx 2^{-M_n}$ 
(see \eqref{eq:delta_n-def} and \eqref{eq:sum-Mn}), 
condition \eqref{eq:M-condition-2} imposes a strong growth condition on the sequence $\{M_n\}$. 

Recall from Section \ref{sec:F} 
that $E_{\{M_n\}}=\bigcap_{n\ge 1} \Fb_{n}$. 
The proof of Theorem \ref{thm:Th1} proceeds by showing that 
the $[\delta_{n+1},1]$-limited $h_1$-measure of $\Fb_{n+1}$ only decreases slightly from the $[\delta_{n},1]$-limited $h_1$-measure of $\Fb_{n}$, 
when $M_{n+1}$ is large enough that 
\eqref{eq:M-condition-2} holds. 
Since by construction 
$$\FF_{n+1}=\FF_{n}+\delta_{n}.\EE_{M_{n+1}},$$
the proof relies heavily on the properties of {$\Eb_{M}$} studied in Section \ref{sec:E}. 

Fix a sufficiently large integer $n_0\ge 1$. 
Below we assume that 
$$M_1<\cdots<M_{n}$$ 
($n\ge n_0$) are given,  
and denote $M_{n+1}$ by $M$. 

We start with a local lower bound for the $h_1$-measure of $\Fb_{n+1}$ at scale $\frac1M$. 

\begin{lemma}
\label{lemma of Sj}
Suppose $M=M_{n+1}$ satisfies 
\begin{equation}
\label{eq:M-condition-1}
\frac1M<\delta_n. 
\end{equation}
Then, for any dyadic square $\widetilde U$ with 
$r(\widetilde U)=\frac1M$
and $c(\widetilde U)\in
\overline\Fb_{n+1}$, we have 
\begin{equation}\label{tilde-Sj-bound}
\mathcal{M}^{{\frac 1M}}_{\delta_{n+1}}(\interior(\widetilde U)\cap \Fb_{n+1};h_1) 
\ge c_1|\widetilde U|\,\phi\big(\frac{1}{M}\big), 
\end{equation}
where $c_1$ is as in Proposition \ref{prop:lemma of Sj-3}.
\end{lemma}

\begin{proof}
Since $c(\widetilde U)\in
\overline\Fb_{n+1}$, by \eqref{eq:Fn-cov}, 
there exists $l_a\in\FF_n$ such that 
$$c(\widetilde U)\in \convex(\delta_n.\Eb_M+l_a).$$
Fix such an $l_a$. Applying Proposition \ref{prop:lemma of Sj-3} with $\delta=\delta_n$, $\delta_1=\delta_{n+1}$, and $l=l_a$, 
we have 
\eq{eq:lem5.1-1}{
\mathcal M_{\delta_{n+1}}\big(\interior(\widetilde U)\cap(\delta_n.\Eb_{M}+l_a);h\big)
\ge c_1 |\widetilde U|.
}
Since (see \eqref{eq:Fn=Fn(a)}, \eqref{eq:Fn(a)=EM})
$$\delta_n.\Eb_{M}+l_a\subset \Fb_{n+1},$$ 
it follows from \eqref{eq:lem5.1-1} that 
\begin{align*}
\mathcal{M}^{{\frac 1M}}_{\delta_{n+1}}(\interior(\widetilde U)\cap \Fb_{n+1};h_1)
&\ge \mathcal{M}^{{\frac 1M}}_{\delta_{n+1}}(\interior(\widetilde U)\cap \Fb_{n+1};h)\cdot \phi\big(\frac{1}{M}\big)\\
&\ge \mathcal M_{\delta_{n+1}}\big(\interior(\widetilde U)\cap(\delta_n.\Eb_{M}+l_a);h\big)\cdot \phi\big(\frac{1}{M}\big)\\
&\ge c_1|\widetilde U|\,\phi\big(\frac{1}{M}\big), 
\end{align*}
where we have used the monotonicity of $\phi$ in the first inequality, 
and the monotonicity of the scale-limited measure in the second inequality. 
This completes the proof of Lemma \ref{lemma of Sj}.  
\end{proof}

\begin{remark}
$(i)$ A more careful analysis of the contribution from different scales (similar to the proof of Proposition \ref{prop:lemma of Sj-3}) shows that the factor $\phi\big(\frac{1}{M}\big)$ in \eqref{tilde-Sj-bound} can be improved to $\phi\big(2^{-M}\big)$, 
provided that $\phi$ satisfies
$$\phi(r^2)\le \phi(r)+C,\quad 0<r\le 1$$
for an absolute constant $C$. 

\noindent$(ii)$ The proof above also shows\,\footnote{\,In what follows, 
all scale-limited measures are with respect to $h_1$. So we write $\mathcal M(E)=\mathcal M(E;h_1)$ for simplicity.} 
\eq{eq:rmk5.2-1}{
\mathcal{M}_{\delta_{n+1}}^{\frac{1}{M}}(\delta_n. \Eb_{M}+l_a) 
\gtrsim
\delta_n\,\phi\big(\frac{1}{M}\big),\quad l_a\in\FF_n. 
}
In comparison, covering $\delta_n. \Eb_{M}+l_a$ by $\delta_n\times\delta_n$ squares gives
\eq{eq:rmk5.2-2}{
\mathcal{M}_{\delta_{n}}(\delta_n. \Eb_{M}+l_a) 
\lesssim \delta_n \log\big(\frac{1}{\delta_n}\big)\,\phi(\delta_n). 
}
Note that \eqref{eq:rmk5.2-2} is strictly smaller than \eqref{eq:rmk5.2-1} when $\frac1M\ll \delta_n$. 
\end{remark}

Next, we apply Lemma \ref{lemma of Sj} to establish a local lower bound for the $h_1$-measure of $\Fb_{n+1}$ at scale $\delta_n$. Note that it is implied by condition \eqref{eq:M-condition-2} that 
$\frac{1}{M}<\delta_n$. 

\begin{lemma}
\label{coro of M}
Suppose 
$M=M_{n+1}$ satisfies \eqref{eq:M-condition-2}. 
Then, for any dyadic square $\widetilde Q\subset\mathring\Fb_{n}$ with $r(\widetilde Q)=\delta_n$, we have
$$\mathcal{M}_{\delta_{n+1}}^{\delta_n}
(\interior(\widetilde Q) \cap \Fb_{n+1}) 
=
h_1(\widetilde Q).$$ 
\end{lemma}

\begin{proof}
By \eqref{eq:M(Q)}, it is clear that 
\eq{eq:M(Q)-upper}{
\mathcal{M}_{\delta_{n+1}}^{\delta_n}
(\interior(\widetilde Q) \cap \Fb_{n+1}) 
\le \mathcal{M}_{\delta_{n+1}}^{\delta_n}
(\widetilde Q)
=h_1(\widetilde Q).
} 
To show the lower bound, let 
$\{U_k\}\cup \{Q_i\}$ be an optimal $[\delta_{n+1},\delta_n]$-cover of $\interior(\widetilde Q) \cap \Fb_{n+1}$ with respect to $h_1$, where 
$$
\delta_{n+1}\le r(U_k)\le\frac1M<r(Q_i)\le\delta_n.$$
Now decompose 
\eq{eq:M(Q)-0}{
\widetilde Q=\Big(\bigcup_i\, \widetilde U_i\Big) \bigcup \Big(\bigcup_i\, Q_i\Big),
}
where each $\widetilde U_i$ is a dyadic square with $r(\widetilde U_i)=\frac1M$. 
Notice that 
$$\big\{U_k:\, U_k\subset\widetilde U_i\big\}$$ 
forms a $[\delta_{n+1},\frac1M]$-cover of $\interior(\widetilde U_i)\cap \Fb_{n+1}$. 
Thus, by Definition \ref{def:h-measures}, 
\eq{eq:M(Q)-2}{
\sum_{k:\,U_k\subset\widetilde U_i} h_1(U_k)\ge \mathcal{M}^{{\frac 1M}}_{\delta_{n+1}}(\interior(\widetilde U_i)\cap \Fb_{n+1}). 
}
On the other hand, 
by Lemma \ref{lem:Fn-properties}\,($i$) and the assumption $\widetilde Q\subset\mathring\Fb_{n}$, 
we have $\widetilde Q\subset\overline\Fb_{n+1}$. 
Therefore, we can apply Lemma \ref{lemma of Sj} to each $\widetilde U_i$ to obtain 
\eq{eq:M(Q)-1}{
\mathcal{M}^{{\frac 1M}}_{\delta_{n+1}}(\interior(\widetilde U_i)\cap \Fb_{n+1}) 
\ge c_1|\widetilde U_i|\,\phi\big(\frac{1}{M}\big). 
} 
Combining \eqref{eq:M(Q)-2} and \eqref{eq:M(Q)-1}, we see that 
\begin{align}
\mathcal{M}^{\delta_n}_{\delta_{n+1}}(\interior(\widetilde Q)\cap \Fb_{n+1}) \notag
&=\sum_{k} h_1(U_k) 
+ \sum_{i} h_1(Q_i) \notag\\
&=\sum_{i}\sum_{k:\,U_k\subset\widetilde U_i} h_1(U_k) 
+ \sum_{i} h_1(Q_i) \notag\\
&\ge c_1\sum_i |\widetilde U_i|\,\phi\big(\frac1M\big) 
+ \sum_i |Q_i| \,\varphi\big(r(Q_i)\big), \label{eq:M(Q)-3}
\end{align}
where $\varphi(r)=\log(\frac{1}{r})\,\phi(r)$. 
Applying condition \eqref{eq:M-condition-2} to the first sum, and the monotonicity of $\varphi$ to the second sum (noting that $r(Q_i)\le\delta_n$), we have 
\begin{align}
\eqref{eq:M(Q)-3}
&\ge 
\Big(\sum_i |\widetilde U_i|
+ \sum_i |Q_i|\Big)\,\varphi(\delta_n)
\notag\\
&= |\widetilde Q| \,\varphi(\delta_n)\notag\\
&= h_1(\widetilde Q), 
\label{eq:M(Q)-4}
\end{align}
where we have used \eqref{eq:M(Q)-0} in the second line. 
Combining \eqref{eq:M(Q)-upper}, \eqref{eq:M(Q)-3}, and \eqref{eq:M(Q)-4}, this completes the proof of Lemma \ref{coro of M}. 
\end{proof}

\begin{remark}
A similar argument using 
Lemma \ref{lem:Fn-properties}\,($ii$) and Lemma \ref{lem:invisible}
shows that, under the same condition, 
$$\mathcal{M}_{\delta_{n+1}}^{r(\widetilde Q)}
(\interior(\widetilde Q) \cap \Fb_{n+1}) 
=
h_1(\widetilde Q)$$ 
holds for any dyadic square $\widetilde Q\subset\mathring\Fb_{n}$ with $r(\widetilde Q)\ge\delta_n$. 
\end{remark} 

The following lemma gives a preliminary lower bound for $\mathcal{M}_{\delta_{n+1}}(\Fb_{n+1})$. 
It will be used together with Lemma \ref{lem:Mn} to derive a uniform lower bound for $\mathcal{M}_{\delta_n}(\Fb_{n})$ 
(see Corollary \ref{cor:M(F)>0} below). 

\begin{lemma}
\label{lem:prelim-lower-bound}
Suppose $M=M_{n+1}$ satisfies \eqref{eq:M-condition-1}. Then we have 
$$\mathcal{M}_{\delta_{n+1}}(\Fb_{n+1})>\frac{\phi(1)}{8}\frac{1}{M^2}.$$
\end{lemma}

\begin{proof}
By Definition \ref{def:h-measures} 
and the monotonicity of $\phi$, 
we have 
\eq{eq:prelim-1}{
\mathcal{M}_{\delta_{n+1}}(\Fb_{n+1})
\ge \phi(1)|\Fb_{n+1}|. 
}
In view of \eqref{eq:Fn=Fn(a)} and \eqref{eq:Fn(a)=EM}, 
\begin{align}
|\Fb_{n+1}|
&\ge |\delta_n. \Eb_M + l_0|\notag\\
&=\delta_n |\Eb_M|\notag\\
&>\frac1M |\Eb_M| 
\label{eq:prelim-2}
\end{align}
(where we have used condition \eqref{eq:M-condition-1} in the last line). 
On the other hand, 
it is easy to verify using \eqref{eq:E=P-A} that 
\eq{eq:prelim-3}{
|\Eb_M|>\frac{1}{8M}. 
}
Thus, combining 
\eqref{eq:prelim-1}, \eqref{eq:prelim-2}, and \eqref{eq:prelim-3}, we obtain 
$$\mathcal{M}_{\delta_{n+1}}(\Fb_{n+1})>\frac{\phi(1)}{8}\frac{1}{M^2}.$$
This completes the proof of Lemma \ref{lem:prelim-lower-bound}.
\end{proof}

\begin{remark}
Alternatively, one can use the arguments in \cite[Theorem~2]{Keich1999} to show that, unconditionally, $|\Fb_{n+1}|\approx \frac{1}{M}$. 
\end{remark} 

We now compare the $[\delta_{n+1},1]$-limited $h_1$-measure of $\Fb_{n+1}$ with the $[\delta_{n},1]$-limited $h_1$-measure of $\Fb_{n}$. 

\begin{lemma}
\label{lem:Mn}
Suppose $M=M_{n+1}$ satisfies \eqref{eq:M-condition-2}. Then we have 
\begin{equation}\label{eq:iteration}
\mathcal{M}_{\delta_{n+1}}(\Fb_{n+1})
\ge 
\mathcal{M}_{\delta_n}(\Fb_{n}) - C {M_{n}}\, {2^{-{M_n}/{4}}}, 
\end{equation}
where $C=196$. 
\end{lemma}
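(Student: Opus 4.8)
The plan is to pass from a global optimal $\delta_{n+1}$-cover of $\Fb_{n+1}$ to a global cover of $\Fb_n$ by squares of side $\ge\delta_n$, losing only a factor $(1-\eta_n)$, and to do this by localizing over the $\delta_n\times\delta_n$ dyadic squares into which $\Fb_n$ naturally decomposes. First I would take an optimal $\delta_{n+1}$-cover $\{V_k\}$ of $\Fb_{n+1}$, so that $\sum_k h_1(r(V_k))=\mathcal{M}_{\delta_{n+1}}(\Fb_{n+1})$, and split it according to the cube size relative to $\delta_n$: the ``small'' squares with $\delta_{n+1}\le r(V_k)\le\delta_n$ and the ``large'' squares with $r(V_k)>\delta_n$. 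Because $\Fb_n\subset\Fb_{n-1}\subset\cdots$ and $\Fb_n$ lives in the thin tube $V_{\delta_n}(\FF_n)$, the set $\Fb_n$ is covered by a disjoint family of $\delta_n\times\delta_n$ dyadic squares $\{Q_{\delta_n}\}$, each contained in $\Fb_n$ in the sense of Lemma \ref{coro of M}; the cubes $\Fb_{n+1}\cap Q_{\delta_n}$ partition the small-square contribution.

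The heart of the argument is to apply Lemma \ref{coro of M} inside each such $Q_{\delta_n}$. For the small squares, restricting $\{V_k\}$ to a fixed $Q_{\delta_n}$ gives a cover of $\Fb_{n+1}\cap Q_{\delta_n}$ by squares of side in $[\delta_{n+1},\delta_n]$, whence by Lemma \ref{coro of M}
$$
\sum_{V_k\subset Q_{\delta_n}} h_1(r(V_k))
\ge \mathcal{M}_{\delta_{n+1}}^{\delta_n}(\Fb_{n+1}\cap Q_{\delta_n})
\ge \Big(1-\tfrac{\eta_n}{2}\Big)\,h_1(\delta_n).
$$
Summing over the squares $Q_{\delta_n}$ meeting $\Fb_n$ replaces each such $Q_{\delta_n}$ by one square of side $\delta_n$ contributing $h_1(\delta_n)$ to a legitimate $\delta_n$-cover of $\Fb_n$; hence the small-square part of the optimal cover already dominates $(1-\tfrac{\eta_n}{2})$ times $\mathcal{M}_{\delta_n}(\Fb_n)$. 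For the large squares, each $V_k$ with $r(V_k)>\delta_n$ is itself an admissible square for a $\delta_n$-cover of $\Fb_n$, so it can be kept verbatim; I would charge its $h_1$-mass directly against $\mathcal{M}_{\delta_n}(\Fb_n)$ via the subadditivity of the minimum defining $\mathcal{M}_{\delta_n}$.

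To assemble the bound, I would argue that the union of the retained large squares together with the single $\delta_n$-squares $\{Q_{\delta_n}\}$ (one for each that meets $\Fb_n$ and whose mass was produced by the small squares) forms a valid cover of $\Fb_n$ by squares of side $\ge\delta_n$, so its total $h_1$-cost is at least $\mathcal{M}_{\delta_n}(\Fb_n)$. Comparing costs yields
$$
\mathcal{M}_{\delta_{n+1}}(\Fb_{n+1})
\ge \Big(1-\tfrac{\eta_n}{2}\Big)\,\mathcal{M}_{\delta_n}(\Fb_n)
\ge (1-\eta_n)\,\mathcal{M}_{\delta_n}(\Fb_n),
$$
where the slack $\tfrac{\eta_n}{2}$ versus $\eta_n$ absorbs the bookkeeping in matching large squares to the $Q_{\delta_n}$ they may overlap. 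The main obstacle, and the step deserving the most care, is precisely this matching: a large square $V_k$ may straddle several $Q_{\delta_n}$, so I must ensure the $\delta_n$-cover is assembled without double-charging — either by assigning each $Q_{\delta_n}$ to at most one source (small squares inside it or a large square covering it) and invoking disjointness of the $Q_{\delta_n}$, or by noting that whenever a large $V_k$ already covers a $Q_{\delta_n}$ I simply omit the separate $\delta_n$-square there, which is where the factor relaxation from $1-\tfrac{\eta_n}{2}$ to $1-\eta_n$ is spent.
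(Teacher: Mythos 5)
Your overall strategy is the transpose of the paper's: the paper starts from an optimal $\delta_n$-cover $\{Q_k\}$ of $\Fb_n$ and localizes the passage to $\Fb_{n+1}$ inside each $Q_k$, while you start from an optimal $\delta_{n+1}$-cover of $\Fb_{n+1}$ and reassemble a $\delta_n$-cover of $\Fb_n$. Either direction can in principle work, and the double-counting worry you single out at the end is in fact the harmless part: once the optimal cover is taken to consist of disjoint dyadic squares, a large square either contains a given $\delta_n$-dyadic square or has disjoint interior from it, so the matching is automatic.

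The genuine gap is elsewhere: your assertion that ``$\Fb_n$ is covered by a disjoint family of $\delta_n\times\delta_n$ dyadic squares, each contained in $\Fb_n$ in the sense of Lemma \ref{coro of M}'' is false. The set $\Fb_n=V_{\delta_n}(\FF_n)$ is a union of slanted tubes of vertical width $\delta_n$ with slopes ranging over $[0,1]$, so a $\delta_n\times\delta_n$ dyadic square meeting $\Fb_n$ generally protrudes from it; only the interior squares satisfy the hypothesis $Q_{\delta_n}\subset\Fb_n$ of Lemma \ref{coro of M}. For a boundary square $Q$ the intersection $Q\cap\Fb_{n+1}$ can be an arbitrarily thin sliver, so the small squares of your optimal cover lying inside $Q$ need not carry mass $(1-\tfrac{\eta_n}{2})h_1(\delta_n)$, yet your reassembled cover of $\Fb_n$ charges a full $h_1(\delta_n)$ for $Q$; the comparison then breaks down exactly along the boundary of the tubes. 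This is where the paper spends the second half of its proof: it isolates the squares with $Q_k\not\subset\Fb_n$, splits them by size against a threshold $r$ depending on $M_n$ and $\eta_n$, discards the small ones at a total cost of $\tfrac{\eta_n}{2}$ of the measure (using the structure of $\delta_{n-1}\Eb_{M_n}$ along each parallelogram), and shows the large ones lose only an $\tfrac{\eta_n}{2}$ fraction because the boundary strip they see is thin relative to $r_j$; this is also where the requirement that $n_0$ be sufficiently large enters. Your proof needs some version of this boundary analysis before the factor $(1-\eta_n)$ can be claimed.
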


\begin{proof}

\begin{figure}[htb]
\centering
\subfigure[$\,\mathring\Fb_{n}\subset\overline\Fb_{n+1}\subset\Fb_n$\quad]
{\includegraphics[width=0.28\textwidth]{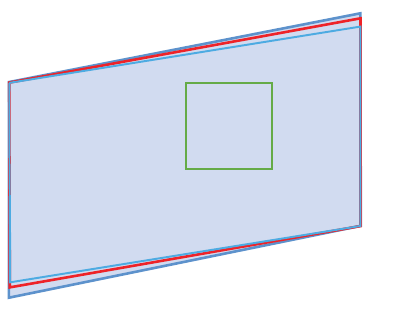}}
\subfigure[$\,\partial\Fb_{n}$]
{\includegraphics[width=0.28\textwidth]{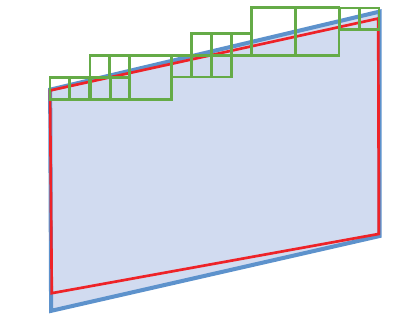}}
\subfigure[larger dyadic squares (green) covering $\Fb_{n}$]{\includegraphics[width=0.59\textwidth]{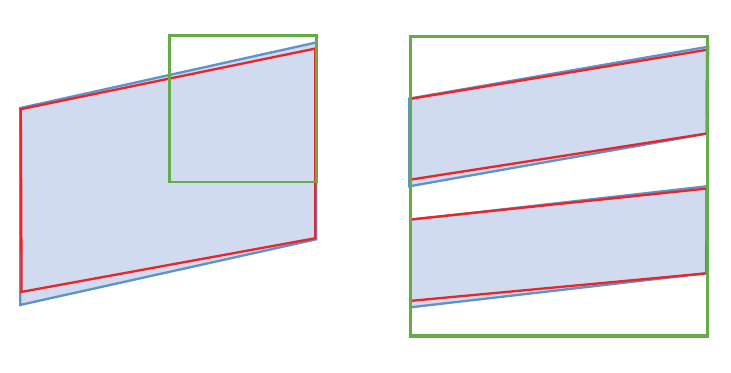}}
\includegraphics[width=0.5\textwidth]{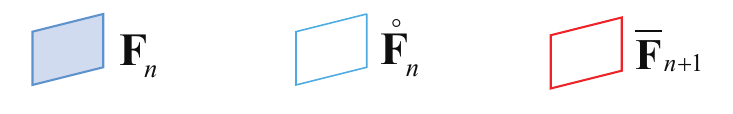}
\caption{An illustration of the proof of Lemma \ref{lem:Mn}}
\label{fig:Qk}
\end{figure}  
Let ${\mathcal{Q}}_n$ be 
the collection of dyadic squares 
$\widetilde Q\subset\mathring\Fb_{n}$ 
with $r(\widetilde Q)=\delta_n$. 
Set 
$$\overset{\mathbin{\rule{.6mm}{.6mm}}}{\Fb}_n
=\bigcup_{\widetilde Q\in {\mathcal{Q}}_n} \widetilde Q.$$
By monotonicity, we have 
\eq{eq:M(Fn)-M(Fn+1)-0}{\mathcal{M}_{\delta_{n+1}}(\Fb_{n+1})\ge \mathcal{M}_{\delta_{n+1}}(\overset{\mathbin{\rule{.6mm}{.6mm}}}{\Fb}_n\cap\Fb_{n+1}).}
As in the proof of Lemma \ref{coro of M}, 
let $\{U_k\}\cup \{Q_i\}$ 
be an optimal $\delta_{n+1}$-cover of $\overset{\mathbin{\rule{.6mm}{.6mm}}}{\Fb}_n\cap\Fb_{n+1}$ with respect to $h_1$, where 
$$\delta_{n+1}\le r(U_k)\le\delta_n<r(Q_i)\le1.$$ 
Without loss of generality, assume 
$\{U_k\}\prec {\mathcal{Q}}_n$ 
(see Remark \ref{rmk:def-2.1}). 
Let 
$$\big\{\widetilde Q_i\big\}=\big\{\widetilde Q\in {\mathcal{Q}}_n: 
\exists\, k \text{ such that } U_k\subset \widetilde Q\big\}.$$ 
Then for each $\widetilde Q_i$, 
$$\big\{U_k:\, U_k\subset \widetilde Q_i\big\}$$
forms a $[\delta_{n+1},\delta_{n}]$-cover of 
$\interior(\widetilde Q_i)\cap \Fb_{n+1}$. 
Therefore, by Lemma \ref{coro of M}, 
\begin{align}
\sum_{k:\,U_k\subset \widetilde Q_i} h_1(U_k)
\ge \mathcal{M}_{\delta_{n+1}}^{\delta_n}
(\interior(\widetilde Q_i) \cap \Fb_{n+1}) 
= h_1(\widetilde Q_i).
\label{eq:M(Fn)-M(Fn+1)-1}
\end{align}
On the other hand, using Lemma \ref{coro of M} again, we have 
$${\mathcal{Q}}_n\prec \{\widetilde Q_i\}\cup\{Q_i\}.$$ 
Thus 
$\{\widetilde Q_i\}\cup\{Q_i\}$ 
forms a $[\delta_n,1]$-cover of $\overset{\mathbin{\rule{.6mm}{.6mm}}}{\Fb}_n$, 
and so 
\begin{align}
\sum_{i} h_1(\widetilde Q_i)+\sum_{i} h_1(Q_i)
&\ge \mathcal{M}_{\delta_{n}}
(\overset{\mathbin{\rule{.6mm}{.6mm}}}{\Fb}_n) \notag\\
&\ge \mathcal{M}_{\delta_{n}}
(\Fb_n)
-\mathcal{M}_{\delta_{n}}
(\Fb_n\backslash\overset{\mathbin{\rule{.6mm}{.6mm}}}{\Fb}_n), 
\label{eq:M(Fn)-M(Fn+1)-2}
\end{align}
where we have used the subadditivity of the scale-limited measure in the last line. 
Combining  \eqref{eq:M(Fn)-M(Fn+1)-0}, 
\eqref{eq:M(Fn)-M(Fn+1)-1}, and \eqref{eq:M(Fn)-M(Fn+1)-2}, 
we see that 
\begin{align}
\mathcal{M}_{\delta_{n+1}}
(\Fb_{n+1})
&\ge \mathcal{M}_{\delta_{n+1}}(\overset{\mathbin{\rule{.6mm}{.6mm}}}{\Fb}_n\cap\Fb_{n+1})\notag\\
&=\sum_{k} h_1(U_k) 
+ \sum_{i} h_1(Q_i) \notag\\
&=\sum_{i}\sum_{k:\,U_k\subset\widetilde Q_i} h_1(U_k) 
+ \sum_{i} h_1(Q_i) \notag\\
&\ge \sum_{i} h_1(\widetilde Q_i)+\sum_{i} h_1(Q_i) \notag\\ 
&\ge  \mathcal{M}_{\delta_{n}}
(\Fb_n)
-\mathcal{M}_{\delta_{n}}
(\Fb_n\backslash\overset{\mathbin{\rule{.6mm}{.6mm}}}{\Fb}_n). \notag
\end{align}
It remains to show 
\eq{eq:M(Fn)-M(Fn+1)-3}{
\mathcal{M}_{\delta_{n}}
(\Fb_n\backslash\overset{\mathbin{\rule{.6mm}{.6mm}}}{\Fb}_n)
\le 196\, M_n\,2^{-M_n/4}. 
}
Indeed, by \eqref{eq:Fn-interior}, \eqref{def:Em0}, and \eqref{eq:Fn-bdry}, we have 
\begin{align*}
\mathring\Fb_n\backslash\overset{\mathbin{\rule{.6mm}{.6mm}}}{\Fb}_n
&= \bigcup_{l_{a}\in \FF_{n-1}} (\delta_{n-1}.\mathring\Eb_{M_n}+l_a)
\backslash \overset{\mathbin{\rule{.6mm}{.6mm}}}{\Fb}_n\\
&\subset \bigcup_{l_{a}\in\FF_{n-1}} 
\big(\delta_{n-1}.\bdry(\Eb_{M_n})+l_a\big)(3\delta_n)\\
&=(\partial\Fb_n)(3\delta_n). 
\end{align*}
Together with Proposition \ref{lem:Fn-properties}\,($iii$), 
this implies  
\begin{align}
\Fb_n\backslash\overset{\mathbin{\rule{.6mm}{.6mm}}}{\Fb}_n
=(\Fb_n\backslash\mathring\Fb_n)\cup(\mathring\Fb_n\backslash\overset{\mathbin{\rule{.6mm}{.6mm}}}{\Fb}_n) \subset (\partial\Fb_n)(3\delta_n).
\label{eq:M(Fn)-M(Fn+1)-4}
\end{align}
On the other hand, by Remark \ref{rmk:N(bdry-Fn)}, we have 
\begin{align}
\mathcal N_{\delta_n}\big((\partial\Fb_n)(3\delta_n)\big)
\le 7 \,\mathcal N_{\delta_n}(\partial\Fb_n)
\le \frac{49}{\delta_n^2\,M_n}\,2^{-M_n/4}. \label{eq:M(Fn)-M(Fn+1)-5}
\end{align}
Thus, combining \eqref{eq:M(Fn)-M(Fn+1)-4} and \eqref{eq:M(Fn)-M(Fn+1)-5}, 
we obtain 
\begin{align}
\mathcal{M}_{\delta_{n}}
(\Fb_n\backslash\overset{\mathbin{\rule{.6mm}{.6mm}}}{\Fb}_n)
&\le 
\mathcal N_{\delta_n}(\Fb_n\backslash\overset{\mathbin{\rule{.6mm}{.6mm}}}{\Fb}_n)\cdot h_1(\delta_n)\notag\\
&\le 
\mathcal N_{\delta_n}\big((\partial\Fb_n)(3\delta_n)\big)\cdot \delta_n^2 \Big(\log\frac{1}{\delta_n}\Big)\phi(\delta_n) \notag\\
&\le \frac{49}{M_n}\Big(\log\frac{1}{\delta_n}\Big)^2 \,2^{-M_n/4} \notag\\ 
&\le 49\cdot 4\, M_n\,2^{-M_n/4}, \notag
\end{align}
where we have used condition \eqref{eq:phi-cond} in the third line, and $\log\frac{1}{\delta_n}\le 2M_n$ in the last line (see \eqref{eq:sum-Mn}). 
This proves \eqref{eq:M(Fn)-M(Fn+1)-3}, 
and the proof of Lemma \ref{lem:Mn} is complete. 
\end{proof}

\begin{remark}
\label{rmk:prove-by-nesting}
Alternatively, one can use Lemma \ref{lem:nesting} to keep track of how the squares in an optimal $\delta_{n}$-cover of 
$\Fb_n$ break down into smaller squares to form an optimal $\delta_{n+1}$-cover of 
$\Fb_{n+1}$, and establish an estimate of the form
$$
\mathcal{M}_{\delta_{n+1}}(\Fb_{n+1}) \ge 
\left(1-\eta_n\right) \mathcal{M}_{\delta_n}(\Fb_{n})$$
for a rapidly decreasing sequence $\eta_n$. 
\end{remark}

Combining Lemma \ref{lem:prelim-lower-bound} and Lemma \ref{lem:Mn}, we obtain the following. 

\begin{corollary}
\label{cor:M(F)>0}
Suppose \eqref{eq:M-condition-2} holds for all $n\ge n_0$. Then we have 
\eq{eq:infM(Fn)-0}{
\inf_{n\ge 1}\mathcal M_{\delta_n}(\Fb_n)>0.
}
\end{corollary}

\begin{proof}
Let $n_1>n_0$ be a sufficiently large integer so that
\eq{eq:infM(Fn)-1}{
2\,C {M_{n_1}} {2^{-M_{n_1}/4}} < \frac{\phi(1)}{8}\frac{1}{M_{n_1}^2}, 
}
where $C$ is as in Lemma \ref{lem:Mn}. 
Then, iterating \eqref{eq:iteration}, we have 
\eq{eq:infM(Fn)-2}{
\mathcal{M}_{\delta_{n}}(\Fb_{n})
\ge 
\mathcal{M}_{\delta_{n_1}}(\Fb_{n_1}) - 2\,C {M_{n_1}}{2^{-{M_{n_1}}/{4}}},\quad n\ge n_1. 
}
Apply Lemma \ref{lem:prelim-lower-bound} with $n+1=n_1$. We have 
\eq{eq:infM(Fn)-3}{
\mathcal{M}_{\delta_{n_1}}(\Fb_{n_1})
\ge 
\frac{\phi(1)}{8}\frac{1}{M_{n_1}^2}. 
}
Thus, combining \eqref{eq:infM(Fn)-2}, \eqref{eq:infM(Fn)-3}, and \eqref{eq:infM(Fn)-1}, 
we obtain 
$$\inf_{n\ge n_1}\mathcal M_{\delta_n}(\Fb_n)
>\frac{\phi(1)}{8}\frac{1}{M_{n_1}^2}-2\,C{M_{n_1}}{2^{-M_{n_1}/4}}
>0.$$
On the other hand, by \eqref{eq:prelim-1}, 
we have 
$$\mathcal M_{\delta_n}(\Fb_n)\ge \phi(1)|\Fb_n|>0,\quad  n=1,\cdots,n_1-1.$$ 
Combining the last two bounds shows \eqref{eq:infM(Fn)-0}, 
and the proof of Corollary \ref{cor:M(F)>0} is complete. 
\end{proof}

Theorem \ref{thm:Th1} is now an immediate consequence of Corollary \ref{cor:M(F)>0} and the following proposition. 

\begin{proposition}
\label{prop:equiv-positive}
$E_{\{M_n\}}$ has positive 
Hausdorff measure with respect to $h_1$ if and only if \eqref{eq:infM(Fn)-0} holds. 
\end{proposition}
\begin{proof}
Recall that the Hausdorff measure of $E\subset\mathbb R^2$ with respect to a gauge function $h$ is defined by (cf. \cite[\S\,2.5]{Falconer2003})
\begin{align*}
\mathcal H^h(E)
=&\lim_{\delta\rightarrow 0} 
\mathcal H^h_\delta(E)\\
:=&\lim_{\delta\rightarrow 0} 
\Big(\inf_{\{S_i\}}\sum_i h(S_i) \Big), 
\end{align*}
where the infimum is taken over all possible covers of $E$ by open squares $S_i$ (not necessarily dyadic) with sidelength $r(S_i)\le \delta$. 

Write $E=E_{\{M_n\}}$. 
Since $E$ is compact, 
by standard compactness and doubling arguments (cf. \cite[\S\,2.4]{Falconer2003}), we have 
$$\mathcal H^{h_1}(E)>0\iff \lim_{\delta\rightarrow 0} \mathcal M_{\delta}(E)>0.$$
By monotonicity, the last inequality is equivalent to 
\eq{eq:equiv-positive-1}{
\inf_{n\ge 1} \mathcal M_{\delta_n}(E)>0. 
} 
Since $E\subset \Fb_n$, we have 
$\mathcal M_{\delta_n}(E)\le \mathcal M_{\delta_n}(\Fb_n)$. Thus \eqref{eq:equiv-positive-1} $\Rightarrow$ \eqref{eq:infM(Fn)-0} holds. 

To show \eqref{eq:infM(Fn)-0} $\Rightarrow$ \eqref{eq:equiv-positive-1}, 
it suffices to show 
\eq{eq:equiv-positive-2}{
\mathcal M_{\delta_n}(\Fb_n) \le 3\,\mathcal M_{\delta_n}(E). 
} 
Indeed, 
let $\{Q_i\}$ be an optimal $\delta_n$-cover of $E$ with respect to $h_1$. 
Then, since
$$\Fb_n\subset\FF_n(\delta_n)\subset E(\delta_n)$$
(see the proof of Proposition \ref{proposition of Fn}\,($ii$)), we have 
$$\Fb_n\subset \bigcup_i\, 3Q_i,$$ 
where $3Q_i$ is as in \eqref{eq:(2k+1)Q}. 
By Definition \ref{def:h-measures}, it follows that 
$$M_{\delta_n}(\Fb_n) 
\le 3\sum_i h_1(Q_i)
=3\,\mathcal M_{\delta_n}(E).$$ 
This proves \eqref{eq:equiv-positive-2}, and the proof of Proposition \ref{prop:equiv-positive} is complete. 
\end{proof}

\begin{remark}
\label{rmk:open-problems}
The proof of Theorem \ref{thm:Th1} above does not seem to apply to the Hausdorff measure $\mathcal H^h(E_{\{M_n\}})$ with 
$h(r)=r^2\log(1/r)$. 
In particular, the following questions remain open: 

\noindent$(i)$ Is it true that $\mathcal H^h(E_{\{M_n\}})>0$ when $\{M_n\}$ grows sufficiently fast? 

\noindent$(ii)$ Is it true that $\mathcal H^h(E_{\{2^n\}})=0$?  
\end{remark} 


\section{Minimality of $E_{\{M_n\}}$}
\label{sec:Th2} 

In this section we prove Theorem B. 
By rotation, it follows immediately from the following more precise statements. 

\begin{theorem}
\label{thm:Th2}
For every $E_{\{M_n\}}\in\mathcal K$, the following statements hold. 

\noindent $(i)$ If $a>1\,$or $a<0$, then every line $l\,$with slope $a$ satisfies $\mathcal H^1(l\cap E_{\{M_n\}})=0$. 

\noindent $(ii)$ If $a=0,\, 1$, or $a\in (0,1)$ is not a dyadic rational, then
$l_a=ax+b(a)$ is the only line 
with slope $a$, such that 
$\mathcal H^1(l_a\cap E_{\{M_n\}})>0$.\footnote{\,Here, and below, $l_a$ and $\tilde l_a$ refer to the lines defined in \eqref{eq:lines-la}.}   

\noindent $(iii)$ If $a\in (0,1)$ is a dyadic rational, then 
there are exactly two lines $l=l_a$ and $l=\tilde l_a$ with slope $a$, 
such that 
$\mathcal H^1(l\cap E_{\{M_n\}})>0$. 

\end{theorem} 

\begin{figure}[ht]
\centering
\includegraphics[width=\linewidth]{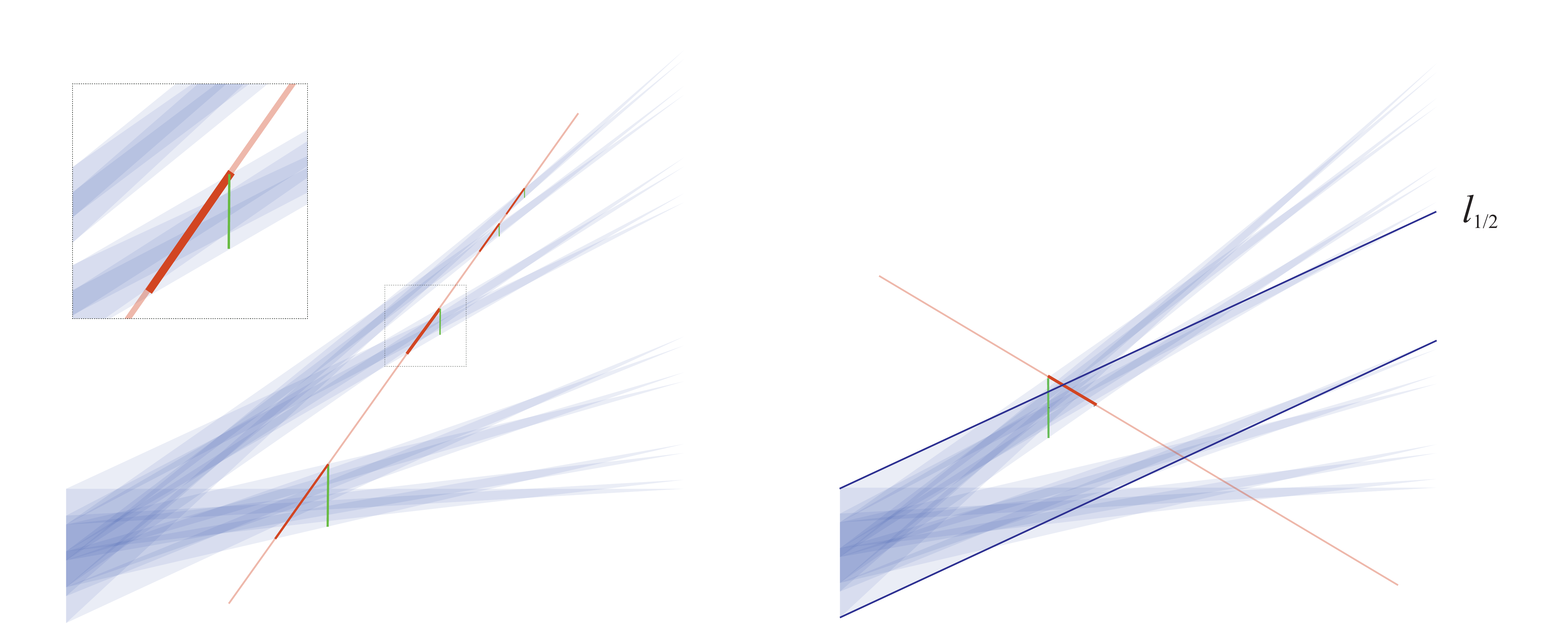}
\caption{Every line with slope $a>1$ or $a<0$ 
has vanishing intersection with $\Eb_M$, as $M\rightarrow\infty$}
\label{fig: minimal}
\end{figure}

The proof of Theorem \ref{thm:Th2} is based on the following proposition, which can be regarded as an extension of \eqref{eq:1/N}. 

\begin{proposition}
\label{prop:prop1-for-Th2}
For every line $l=ax+b$  with $|a|\ge 2$, we have 
$$
\mathcal H^1(l\cap \Eb_\n)\le \frac{2\sqrt{5}}{M}. $$
\end{proposition}

\begin{proof} 
By \eqref{eq:Em-in-Pm}, we have 
$$\Eb_\n
\subset\Pb_M
=\bigcup_{\rk=1}^M\bigcup_{i=1}^{2^{\n-\rk}}P_i^{\rk}.$$
Therefore,  
\begin{equation}
\label{eq:H1(lcapEM)}
\mathcal H^1(l\cap \Eb_\n)
\le \sum_{\rk=1}^M\sum_{i=1}^{2^{\n-\rk}} \mathcal H^1(l\cap P_i^{\rk}).
\end{equation}
Since $P_i^{\rk}$ has vertical width less than $r_j$ (see \eqref{eq:rj-eqn}) and slope in $[0,1]$, and since $|a|\ge 2$, we have 
$$\mathcal H^1(l\cap P_i^{\rk})
\le \sqrt{5}\, r_j.$$
Thus, by \eqref{eq:H1(lcapEM)}, 
\begin{equation}
\label{eq:H1(lcapEM)-2}
\mathcal H^1(l\cap \Eb_\n)
\le \sqrt{5}\, \sum_{\rk=1}^M\sum_{i:\,l\cap P_i^{\rk}\neq\varnothing} r_j.
\end{equation}
Slide the corresponding vertical intervals (of length $r_j$) along the Perron tree $\Pb_M$ to $x=0$. 
The slid intervals form dyadic subintervals of $[-1/M,0]$, overlapping up to multiplicity 2 (due to the condition $|a|\ge 2$). From this and \eqref{eq:H1(lcapEM)-2} it follows that 
$$\mathcal H^1(l\cap \Eb_\n)
\le \frac{2\sqrt{5}}{M}.$$
This completes the proof of Proposition \ref{prop:prop1-for-Th2}. 
\end{proof}

In order to apply Proposition \ref{prop:prop1-for-Th2}, we shall need the following extension. 

\begin{proposition}
\label{prop:prop2-for-Th2}
Let $l=ax+b$ and $l_0=a_0x+b_0$ be two lines with $|a-a_0|\ge 2\delta$ and $a_0\in[0,1]$. Then we have 
\begin{equation}\label{eq:prop2-for-Th2}
\mathcal H^1\big(l\cap (\delta.\Eb_\n+l_0)\big)
\le \Big(1+\frac{1}{|a-a_0|}\Big)\cdot\delta\cdot\frac{10}{M}, 
\end{equation}
\end{proposition}

\begin{proof}
Since $|a_0|\le 1$, 
applying the affine transformation $y\mapsto y+a_0x+b$ to covers of $(l-l_0)\cap \delta.\Eb_\n$, we obtain 
\begin{equation}
\label{eq:root-5}
\mathcal H^1\big(l\cap (\delta.\Eb_\n+l_0)\big)
\le \sqrt{5}\,\mathcal H^1\big((l-l_0)\cap \delta.\Eb_\n\big).
\end{equation}
Note that $l-l_0$ has slope $a-a_0$. 
Since $|a-a_0|\ge 2\delta$, 
applying another affine transformation $y\mapsto \delta y$, we obtain 
\begin{equation}
\label{eq:l-l_0} 
\mathcal H^1\big((l-l_0)\cap \delta.\Eb_\n\big)
\le \Big(1+\frac{1}{|a-a_0|}\Big)\cdot\delta\cdot\mathcal H^1\big(\delta^{-1}(l-l_0)\cap \Eb_\n\big).
\end{equation}
Now by Proposition \ref{prop:prop1-for-Th2}, we have 
$$\mathcal H^1\big(\delta^{-1}(l-l_0)\cap \Eb_\n\big)
\le \frac{2\sqrt 5}{M}.$$
Combining this with \eqref{eq:root-5} and \eqref{eq:l-l_0} yields \eqref{eq:prop2-for-Th2}, as desired. 
\end{proof}

We are now ready to prove Theorem \ref{thm:Th2}. 

\begin{proof}[Proof of Theorem \ref{thm:Th2}]

\noindent($i$) 
By \eqref{eq:E=nF}, we have 
$$\mathcal H^1(l\cap E_{\{M_n\}})\le \mathcal H^1(l\cap \Fb_{n+1}),\quad n=0, 1, \cdots.$$ 
Let $n_0\ge 1$ be the smallest index such that $\min(|a-1|,|a|) \ge 2\delta_{n_0}$. Fix $n\ge n_0$. 
By \eqref{eq:Fn=Fn(a)}, 
\begin{equation}
\label{eq:Fn+1=UFn}
\Fb_{n+1}
=\bigcup_{0\le j<\delta_n^{-1}} \Fb_{n+1}^{(j\delta_{n})}. 
\end{equation}
Using \eqref{eq:Fn(a)=EM}, we can write 
\begin{equation}
\label{eq:V=EMpm}
\Fb_{n+1}^{(j\delta_{n})}
=\big(\delta_{n}.\Eb_{M_{n+1}} \cup \delta_{n}.\Eb_{M_{n+1}}^+\big)
+l_{j\delta_{n}}, 
\end{equation} 
where $l_{j\delta_{n}}$ is the line in $\FF_{n}$ with slope $j\delta_{n}$. 
Since we are restricting to $\frac12\le x\le \frac 34$, it is easy to verify by \eqref{def:Em+} that 
$\Eb_M^+$ can be covered by three vertical shifts of $\Eb_M$. 
More precisely, here we have 
$$\delta_{n}.\Eb_{M_{n+1}}^+
\subset \bigcup_{k=1}^3 
\Big(\delta_{n}.\Eb_{M_{n+1}} + k\frac{\delta_{n+1}}{4}\Big), $$
Therefore, by \eqref{eq:V=EMpm}, 
\begin{equation}
\label{eq:F(j)n+1}
\Fb_{n+1}^{(j\delta_{n})}
\subset \bigcup_{k=0}^3 
\Big(\delta_{n}.\Eb_{M_{n+1}} + l_{j\delta_{n}} + k\frac{\delta_{n+1}}{4}\Big).
\end{equation}
Since 
$$|a-j\delta_{n}|\ge \min(|a-1|,|a|)\ge 2 \delta_{n_0}\ge2 \delta_{n},$$ 
by Proposition \ref{prop:prop2-for-Th2}, we have for $0\le k\le 3$,   
\begin{align*}
\mathcal H^1\left(l\cap \Big(\delta_{n}.\Eb_{M_{n+1}} + l_{j\delta_{n}} + k\frac{\delta_{n+1}}{4}\Big)\right)
&\le \Big(1+\frac{1}{2\delta_{n_0}}\Big)\cdot\delta_n\cdot\frac{10}{M_{n+1}}\\
&\le \frac{\delta_n}{\delta_{n_0}}\frac{10}{M_{n+1}}. 
\end{align*}
Thus, by \eqref{eq:F(j)n+1}, 
$$\mathcal H^1(l\cap \Fb_{n+1}^{(j\delta_{n})})
\le \frac{\delta_n}{\delta_{n_0}}\frac{40}{M_{n+1}}.$$
Summing over $j$, in view of  \eqref{eq:Fn+1=UFn}, we obtain 
$$\mathcal H^1(l\cap \Fb_{n+1})
\le \frac{1}{\delta_{n_0}}\frac{40}{M_{n+1}}.$$
Taking $n\rightarrow\infty$, 
since $M_n\rightarrow\infty$, 
we see that 
$$\mathcal H^1(l\cap E_{\{M_n\}})
\le \lim_{n\rightarrow\infty}\mathcal H^1(l\cap \Fb_{n+1})
=0.$$
This proves ($i$). 

\noindent($ii$) For simplicity, we consider only the case where $a\in(0,1)$ is not a dyadic rational 
(the cases $a=0$ and $a=1$ are similar; see also the proof of \noindent($iii$) below). 
By Proposition \ref{proposition of Fn}\,($ii$), 
we have $\mathcal H^1 (l \cap E_{\{M_n\}})>0$ when $l=l_a$. 
So we need to show $\mathcal H^1 (l \cap E_{\{M_n\}})=0$ when $l\neq l_a$. 

Fix such a line $l=ax+b$ 
with $b\neq b(a)$. 
Let $n_0\ge 1$ be the smallest index such that $|b-b(a)|>\delta_{n_0}$, 
and let $j_0$ be the integer such that 
$$a\in \big(j_0\delta_{n_0},(j_0+1)\delta_{n_0}\big).$$
Applying \eqref{eq:Fn-in-V} inductively, 
we have for any $n\ge n_0$, 
\begin{equation}
\label{eq:inclusion}
\bigcup_{j\delta_n\in [j_0\delta_{n_0},(j_0+1)\delta_{n_0})} \Fb_{n+1}^{(j\delta_{n})}
\subset \V_{\delta_{n_0}}(l_{j_0\delta_{n_0}}).    
\end{equation}
In particular, 
restricting to $0\le x\le 1$, 
we have 
$l_a \subset \V_{\delta_{n_0}}(l_{j_0\delta_{n_0}})$, 
which in turn implies 
\begin{equation}
\label{eq:emptyset}
l\cap \V_{\delta_{n_0}}(l_{j_0\delta_{n_0}})=\varnothing, 
\end{equation}
since $|b-b(a)|>\delta_{n_0}$. 

Now let $n_1\ge n_0$ be the smallest index such that 
$$\min\big(a-j_0\delta_{n_0},(j_0+1)\delta_{n_0}-a\big)
\ge 2\delta_{n_1}.$$ 
Fix $n\ge n_1$. 
Denote 
$$\Fb_{n+1}'
=\bigcup_{j\delta_n\notin [j_0\delta_{n_0},(j_0+1)\delta_{n_0})} \Fb_{n+1}^{(j\delta_{n})}.$$
By \eqref{eq:inclusion} and \eqref{eq:emptyset}, we have 
$$l\cap \Fb_{n+1}=l\cap \Fb_{n+1}'.$$ 
On the other hand, arguing as in the proof of ($i$) above, 
we have  
$$\mathcal H^1(l\cap \Fb_{n+1}')
\le \frac{1}{\delta_{n_1}}\frac{40}{M_{n+1}}.$$ 
Taking $n\rightarrow\infty$, 
we see that      
$$\mathcal H^1(l\cap E_{\{M_n\}})
\le \lim_{n\rightarrow\infty}
\mathcal H^1(l\cap \Fb_{n+1}')=0.$$
This proves ($ii$). 

\noindent($iii$) 
First, we show that 
$\mathcal H^1 (l \cap E_{\{M_n\}})=0$ when $l\neq l_a,\, \tilde l_a$ 
(that is, $l=ax+b$ 
with $b\neq b(a),\,\widetilde b(a)$). 
Let $n_0$ be the smallest index such that 
$l_a\in \FF_{n_0}$, 
and let $n_1\ge n_0$ be the smallest index such that 
\begin{equation}
\label{eq:n_1-cond}
\min\big(|b-b(a)|,|b-\widetilde b(a)|\big)>\delta_{n_1}.
\end{equation}
Then, for any $n\ge n_1$, 
we have 
\begin{equation}
\label{eq:inclusions}
\bigcup_{j\delta_n\in [a,a+\delta_{n_1})} \Fb_{n+1}^{(j\delta_{n})}
\subset \V_{\delta_{n_1}}(l_a),\quad\bigcup_{j\delta_n\in [a-\delta_{n_1},a)} \Fb_{n+1}^{(j\delta_{n})}
\subset \V_{\delta_{n_1}}(l_{a-\delta_{n_1}}).
\end{equation}
Thus, restricting to $0\le x\le 1$, we have  
$l_a \subset \V_{\delta_{n_1}}(l_a)$ (trivially) and  
$\tilde l_a \subset \V_{\delta_{n_1}}(l_{a-\delta_{n_1}})$ (by \eqref{eq:Fn-in-V}). 
It follows from \eqref{eq:n_1-cond} that 
\begin{equation}
\label{eq:emptysets}
l\cap \V_{\delta_{n_1}}(l_a)=\varnothing,\quad 
l\cap \V_{\delta_{n_1}}(l_{a-\delta_{n_1}})=\varnothing.
\end{equation}
Now fix $n>n_1$. 
Denote 
$$\Fb_{n+1}'
=\bigcup_{j\delta_n\notin [a-\delta_{n_1},a+\delta_{n_1})} \Fb_{n+1}^{(j\delta_{n})}.$$
By \eqref{eq:inclusions} and \eqref{eq:emptysets}, we have 
$l\cap \Fb_{n+1}=l\cap \Fb_{n+1}'.$ 
Arguing as in the proof of ($ii$) above (note that $\delta_{n_1}\ge 2\delta_n$), 
we have 
$$\mathcal H^1(l\cap \Fb_{n+1}')
\le \frac{2}{\delta_{n_1}}\frac{40}{M_{n+1}}.$$ 
Taking $n\rightarrow\infty$, 
we obtain
$$\mathcal H^1(l\cap E_{\{M_n\}})
\le \lim_{n\rightarrow\infty}
\mathcal H^1(l\cap \Fb_{n+1}')=0,$$ 
as desired. 

Next, we show that indeed $l_a\neq \tilde l_a$ 
(that is, $b(a)\neq \widetilde b(a)$). 
The proof is based on showing
$|l_a(0)-l_{a-\delta_{n_0}}(0)|\neq |\tilde l_a(0)-l_{a-\delta_{n_0}}(0)|$ and relies on an arithmetic observation. 
More precisely, since $M$ is an {even} number, applying \eqref{eq:intercept-difference} to 
$\delta_{n_0-1}.\Eb_{M_{n_0}}+l_{a_0}$ 
(suppose $l_a\in \FF_{n_0}^{(a_0)})$, 
we have 
\begin{equation}
\label{eq:intercept-lower-bound}
|l_{a}(0)-l_{a-\delta_{n_0}}(0)|\ge \delta_{n_0-1}\frac{1}{M_{n_0}\,2^{M_{n_0}}}=\frac{\delta_{n_0}}{M_{n_0}}.
\end{equation}
On the other hand, 
applying 
\eqref{eq:b>-1/M} 
to $\delta_{n_0}.\Eb_{M_{n_0+1}}+l_{a-\delta_{n_0}}$, we have 
\begin{equation}
\label{eq:intercept-upper-bound}
-\frac{\delta_{n_0}}{M_{n_0+1}}<\tilde l_{a}(0)-l_{a-\delta_{n_0}}(0)\le 0. 
\end{equation}
Since ${M_{n_0+1}}\ge {M_{n_0}}$, it follows from 
\eqref{eq:intercept-lower-bound} and \eqref{eq:intercept-upper-bound} that 
 $$|l_a(0)-l_{a-\delta_{n_0}}(0)|
 >|\tilde l_a(0)-l_{a-\delta_{n_0}}(0)|.$$
Thus $l_{a}(0)\neq \tilde l_{a}(0).$ 
This proves ($iii$), 
and the proof of Theorem \ref{thm:Th2} is complete. 
\end{proof}

\noindent
\textbf{Acknowledgment.} 
This work was supported in part by the National Key R$\&$D Program of China 
(No.\;2022YFA1005700) 
and the NNSF of China 
(No.\;12371105). 
The authors would like to thank Sanghyuk Lee for helpful comments on an earlier version of the manuscript.

\nocite{Lee2013}
\bibliographystyle{abbrv}
\bibliography{bibliography}

\end{document}